\documentclass[12pt]{amsart}%
\usepackage{amsfonts}
\usepackage{amsmath}
\usepackage{amssymb}
\usepackage{amsthm}
\usepackage{mathrsfs}
\usepackage{graphicx}%
\usepackage{amsmath}
\usepackage{setspace}
\usepackage{hyperref}
\usepackage{cite}
\usepackage{amsfonts}
\usepackage{algorithmic}
\usepackage{textcomp}
\usepackage{enumitem}
\usepackage{booktabs,tabularx,array}
\usepackage{cleveref}
\allowdisplaybreaks[4]
\makeatletter
\@addtoreset{equation}{section}
\makeatother

\newtheorem{theorem}{Theorem}[section]

\newtheorem{corollary}[theorem]{Corollary}

\newtheorem{definition}[theorem]{Definition}
\newtheorem{example}[theorem]{Example}

\newtheorem{lemma}[theorem]{Lemma}

\newtheorem{proposition}[theorem]{Proposition}
\newtheorem{remark}[theorem]{Remark}

\newcommand{\R}{\mathbb R}
\newcommand{\C}{\mathbb C}
\newcommand{\Z}{\mathbb Z}
\newcommand{\N}{\mathbb N}
\newcommand{\dd}{\,\mathrm d}
\newcommand{\e}{\mathrm e}
\newcommand{\ind}{\mathbf1}
\newcommand{\norm}[1]{\left\|#1\right\|}

\newcommand{\ip}[2]{\left\langle #1,#2\right\rangle}
\newcommand{\supp}{\operatorname{supp}}
\newcommand{\Ran}{\operatorname{Ran}}
\newcommand{\Cobs}{C_{\mathrm{obs}}}
\newcommand{\Tmin}{T_{\min}}
\newcommand{\QH}{\mathcal Q(H)}
\newcommand{\loc}{\mathrm{loc}}
\newcommand{\step}[1]{\par\medskip\noindent\textit{#1}\quad}
\newcommand{\doi}[1]{\href{https://doi.org/#1}{\nolinkurl{doi:#1}}}
\newcommand{\arxiv}[1]{\href{https://arxiv.org/abs/#1}{\nolinkurl{arXiv:#1}}}
\crefname{theorem}{Theorem}{Theorems}
\crefname{proposition}{Proposition}{Propositions}
\crefname{lemma}{Lemma}{Lemmas}
\crefname{corollary}{Corollary}{Corollaries}

\makeatletter
\def\@makefnmark{}
\makeatother

\begin{document}

\title{Observable sets for the Schr\"odinger equation with potential \(\lvert x\rvert\)}

\author{Lu Chen}
\address[Lu Chen]{Key Laboratory of Algebraic Lie Theory and Analysis of Ministry of Education, School of Mathematics and Statistics, Beijing Institute of Technology, Beijing
100081, PR China}
\email{chenlu5818804@163.com}

\author{Haolin Liu$^{*}$}
\address[Haolin Liu]{Key Laboratory of Algebraic Lie Theory and Analysis of Ministry of Education, School of Mathematics and Statistics, Beijing Institute of Technology, Beijing
100081, PR China}
\email{haolinliu2023@126.com}

\author{Hongyu Liu}
\address[Hongyu Liu]{Department of Mathematics, City University of Hong Kong, Hong Kong SAR, China}
\email{hongyu.liuip@gmail.com, hongyliu@cityu.edu.hk}

\address{}

\keywords{Schr\"odinger equation; anharmonic oscillator; thick set; observability; resolvent estimate; unique continuation.}
\thanks{$*$ Corresponding author.}
\thanks{The first author was partly supported by the  National Natural Science Foundation of China (No.12622108), a grant from Beijing Institute of Technology (No.2022CX01002).}
\thanks{\textbf{2020 Mathematics Subject Classification:} 93B07, 35J10, 35Q41, 35B60.}
\begin{abstract}
In this paper, we study observable sets for the one-dimensional Schr\"odinger equation associated with the operator \(H=-d^2/dx^2+|x|\) on \(L^2(\R)\). We prove that a measurable set is observable at every positive time if and only if it is thick. For each fixed observation time, the observability constant is uniform over all measurable sets with prescribed thickness parameters. The theorem answers the question concerning the relationship between thick and observable set raised by Gengsheng Wang\cite{Wang26} for this anharmonic oscillator.
\end{abstract}

\maketitle

\section{Introduction}\label{sec:introduction}
\subsection{Problem and background}
We consider the one-dimensional Schr\"odinger equation
\begin{equation}
 \begin{cases}
 i\partial_tu(t,x)=Hu(t,x),&(t,x)\in(0,T)\times\R,\\
 u(0,x)=u_0(x),&u_0\in L^2(\R),
 \end{cases}
 \qquad H=-\dfrac{d^2}{dx^2}+|x|.
 \label{eq:equation}
\end{equation}
We set \(L^2(\R)=L^2(\R;\C)\). The operator \(H\) is the nonnegative self-adjoint operator associated with the closed quadratic form
\begin{equation}
 \mathfrak h[u]=\int_\R\bigl(|u'(x)|^2+|x||u(x)|^2\bigr)\dd x,
 \qquad
 \QH=\left\{u\in H^1(\R):\int_\R|x||u(x)|^2\dd x<\infty\right\}.
 \label{eq:form-intro}
\end{equation}
The solution is \(u(t)=\e^{-itH}u_0\). For a Lebesgue measurable set \(E\subset\R\), we study the observability inequality
\begin{equation}
 \norm{u_0}_2^2\le C\int_0^T\norm{\e^{-itH}u_0}_{L^2(E)}^2\dd t,
 \qquad u_0\in L^2(\R).
 \label{eq:obs}
\end{equation}
The least admissible constant is denoted by \(\Cobs(E,T)\), with the convention that it is \(+\infty\) if no finite constant satisfies \eqref{eq:obs}.

Following Huang--Wang--Wang~\cite{HWW22,HWW26}, we call \(E\) an \emph{observable set at some time} if \(\Cobs(E,T)<\infty\) for at least one \(T>0\), and an \emph{observable set at any time} if this holds for every \(T>0\). The constant is always independent of the initial datum, but may depend on the observation time and the set. Thus, observation at a time \(T\) means observation on the interval \((0,T)\), not at a single instant. We also set
\begin{equation}
 \Tmin(E)=\inf\{T>0:\Cobs(E,T)<\infty\},
 \qquad \inf\varnothing=+\infty.
 \label{eq:Tmin}
\end{equation}
Since the integrand in \eqref{eq:obs} is nonnegative, observability at one time implies observability at every larger time. Firstly we introduce a important definition of observability.

\begin{definition}\label{def:thick}
Let \(L>0\) and \(0<\gamma\le1\). A measurable set \(E\subset\R\) is \emph{\((L,\gamma)\)-thick} if
\begin{equation}
 |E\cap[x,x+L]|\ge\gamma L\qquad(x\in\R).
 \label{eq:thick}
\end{equation}
It is \emph{thick} if \eqref{eq:thick} holds for some such pair. Throughout, \(|\cdot|\) denotes Lebesgue measure.
\end{definition}
At the 2026 ICM, Wang posed open problems concerning the geometric
characterization of observable sets for several evolution equations\cite{{Wang26}}.
A foundational result in this direction is the geometric control
condition (GCC) introduced by Bardos--Lebeau--Rauch in 1992
for the wave equation\cite{{BLR92}}.
This condition relates observability and exact controllability
to the propagation of geometric rays through the observation region
within a prescribed time. For our operator $H$, this open problem is equivalent to determining the necessary and sufficient conditions for a set to be observable at any time or at some time.
We briefly review recent progress on the geometric characterization
of observable sets for Schr\"odinger equations.

The geometry of observable sets can change with the growth of the potential. In~\cite[Theorems~1.1--1.2]{HWW22}, Huang--Wang--Wang proved that thickness characterizes observability at some time for the one-dimensional free equation. For \(-d^2/dx^2+x^{2m}\), with integer \(m\ge1\), their characterization instead uses weak thickness. This means
\[
 \liminf_{r\to\infty}\frac{|E\cap[-r,r]|}{r}>0.
\]
It characterizes observability at some time when \(m=1\), and at any time when \(m\ge2\). In particular, the discrete character of the spectrum alone does not determine the geometric condition or the admissible observation times.

Su--Sun--Yuan~\cite{SSY25} established quantitative observability at every positive time from thick sets for one-dimensional Schr\"odinger equations with real-valued bounded continuous potentials. Their result includes the free equation, but its boundedness assumption does not include \(|x|\). For smooth subquadratic confining potentials, Prouff~\cite{Prouff25} obtained criteria involving the Hamiltonian flow. The sufficient statement used in that comparison concerns a spatial enlargement of the original observation region. It does not directly imply observation on an arbitrary measurable thick set without enlargement. These results illustrate the two distinct issues considered here: allowing an unbounded potential and retaining the original measurable observation set.

For the specific operator in \eqref{eq:equation}, Huang--Wang--Wang~\cite[Theorem~1.3]{HWW26} proved observability at any time, with explicit small-time estimates, under the assumption that \(E^c\) is \(\alpha\)-thin for some \(\alpha>1/2\). The form of this assumption relevant below is
\begin{equation}
 |E^c\cap[x,x+1]|\le M|x|^{-\alpha}
 \qquad(|x|\ge R_0),
 \label{eq:thin}
\end{equation}
for suitable \(M,R_0>0\). They also proved that observability at some time implies weak thickness, and that neither half-line is observable at any finite time; see~\cite[Theorem~1.4]{HWW26}. Their Proposition~4.1 gives an equivalent condition for observability at some time in terms of the uniform positivity of Gram matrices on spectral intervals of fixed width.

Condition \eqref{eq:thin} implies thickness, but the converse is false. For example,
\begin{equation}
 E_{\mathrm{per}}=\bigcup_{k\in\Z}(k,k+1/2)
 \label{eq:periodic-intro}
\end{equation}
is \((1,1/2)\)-thick, whereas \(|E_{\mathrm{per}}^c\cap[k,k+1]|=1/2\) for every integer \(k\). Thus its complement satisfies no positive-power decay condition of the form \eqref{eq:thin}. This example is discussed in~\cite[Remark~3.3]{HWW26}. The question raised in~\cite[Remark~3.8]{HWW26} is whether general thick sets are observable for \eqref{eq:equation}.

For the heat equation on $\mathbb R^n$, Wang, Wang, Zhang and Zhang \cite{WangWangZhangZhang2019} proved that measurable observable sets are precisely the thick sets, and related this characterization to interpolation and spectral inequalities. For Schr\"odinger equations, the propagation mechanism is different; the classical work of Lebeau \cite{Lebeau1992} developed a microlocal framework for controllability. These results provide useful background for the present problem, where the unbounded potential $(V(x)=|x|)$ interacts nontrivially with the geometry of the observation set.

Our aim is to answer this question for every positive observation time and to identify all measurable observable sets for the fixed potential \(|x|\). This is a specific instance of the relation between observation geometry and observation time discussed by Wang~\cite{Wang26}.
\subsection{Main results}
\begin{theorem}\label{thm:main}
Let \(E\subset\R\) be measurable. The following assertions are equivalent:
\begin{enumerate}
 \item \(E\) is thick.
 \item For every \(T>0\), \(\Cobs(E,T)<\infty\).
 \item For some \(T>0\), \(\Cobs(E,T)<\infty\).
\end{enumerate}
Moreover, for every \(T>0\), \(L>0\), and \(0<\gamma\le1\), there exists \(C=C(T,L,\gamma)<\infty\) such that
\begin{equation}
 \norm f_2^2\le C(T,L,\gamma)
       \int_0^T\norm{\e^{-itH}f}_{L^2(E)}^2\dd t,
 \qquad f\in L^2(\R),
 \label{eq:uniform-anytime}
\end{equation}
for every \((L,\gamma)\)-thick set \(E\). If \(E\) is not thick, then for each fixed \(T>0\) there exists a sequence \((f_j)\) in \(L^2(\R)\) such that
\begin{equation}
 \norm{f_j}_2=1,
 \qquad
 \int_0^T\norm{\e^{-itH}f_j}_{L^2(E)}^2\dd t\longrightarrow0.
 \label{eq:main-failure}
\end{equation}
\end{theorem}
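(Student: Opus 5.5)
The implications (2)$\Rightarrow$(3) are trivial, so the plan is to prove (3)$\Rightarrow$(1) by building concentrating quasimodes/wave packets, and (1)$\Rightarrow$(2) together with the uniform bound \eqref{eq:uniform-anytime}.

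\emph{Necessity.} Suppose $E$ is not thick and fix $T>0$. Then for every $L$ there are intervals $[x_L,x_L+L]$ with $|E\cap[x_L,x_L+L]|/L\to0$. If the centres $x_L$ stay bounded, $E$ has arbitrarily small density on large intervals near a fixed region. If $|x_L|\to\infty$, then on $[x_L,x_L+L]$ the potential $|x|$ is locally the linear potential $\pm x$. In both cases I would take initial data $f_j=\chi_j(x)\e^{i\xi_j x}$ with a smooth cutoff $\chi_j$ of width comparable to $L_j$ (for example a Gaussian of width $\sqrt{L_j}$ placed at the centre of the bad interval) and choose $\xi_j$ so that the classical trajectory stays in the bad interval during $[0,T]$. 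Near $x_L$ the Hamiltonian flow is $\dot x=2\xi$, $\dot\xi=-\operatorname{sgn}x$, so with $\xi_j$ bounded the displacement over time $T$ is $O(T)\ll L$. Dispersion of a packet of width $\sqrt L$ over time $T$ is negligible, and the linear potential only translates and tilts the packet, as in the Avron--Herbst formula. One caveat is the kink at $0$ when $x_L$ is bounded: there the propagation estimate needs $\norm{(1-\ind_{I})\e^{-itH}f_j}_2\to0$, which I would prove by a commutator/energy estimate with a cutoff adapted to $I$. Its error is controlled by $T\norm{\chi_j'}_\infty\sim T/\sqrt{L}$, and this handles both cases uniformly. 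Then
\[
\int_0^T\norm{\e^{-itH}f_j}_{L^2(E)}^2\dd t\le T\sup_t\norm{\ind_I\e^{-itH}f_j}_{L^2(E)}^2+o(1),
\]
and the first term is small because $\ind_E\ind_I$ has small density while the packet is spread. To make this rigorous I would average over translations, or pick packets of width $L$ and use $|\e^{-itH}f_j|^2\lesssim L^{-1}$ on $I$ in an $L^\infty$ sense (given by the explicit linear-potential propagator). This yields \eqref{eq:main-failure}.

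\emph{Sufficiency.} I would use the spectral-interval criterion of Huang--Wang--Wang, in the form: observability at time $T$ follows from a low-energy estimate plus a high-energy resolvent (Hautus-type) estimate
\[
\norm{u}_2^2\le C\bigl(\norm{(H-\lambda)u}_2^2+\norm{u}_{L^2(E)}^2\bigr),\qquad \lambda\in\R,\ u\in D(H),
\]
with $C$ independent of $\lambda$. By the Burq--Zworski / Miller equivalence this gives observability at any time $T>0$, with constant depending only on $C$ and $T$. Uniformity in $E$ follows because $C$ will depend only on $(L,\gamma)$.

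To prove the resolvent estimate, split $\R$ into the classically allowed region $\{|x|\le\lambda-\Lambda\}$ and its complement. In the forbidden region $\{|x|>\lambda+\Lambda\}$, Agmon-type integration by parts against $|x|-\lambda$ controls $u$ by $(H-\lambda)u$. In the allowed region the local wavelength is $(\lambda-|x|)^{-1/2}\lesssim1$. On each unit-scale window $[x,x+L]$ one would then use a one-dimensional Logvinenko--Sereda/Remez-type estimate for solutions of $-u''+(V-\lambda)u=g$ with bounded-on-windows potential, in the manner of Su--Sun--Yuan, who treated bounded potentials. After subtracting $\lambda$, $V-\lambda$ is not bounded. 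However, the estimate is needed only where $|V-\lambda|$ is large, where oscillation occurs at frequency $\sqrt{\lambda-|x|}$. I would rescale each window by the local frequency and use that a thick set remains thick at finer scales in measure (Remez for oscillatory functions). The transition zone $\bigl||x|-\lambda\bigr|\le\Lambda$ is handled by the Airy-scale analysis, since near the turning point the potential is exactly linear, and the window estimate applies with constants uniform in $\lambda$.

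\emph{Main obstacle.} The main obstacle is this uniform-in-$\lambda$ window estimate, namely a Remez/propagation-of-smallness inequality for $-u''+(|x|-\lambda)u=g$ on an interval of fixed length $L$ where $\lambda-|x|$ can be huge. I would first try a WKB/Prüfer-variable argument. Writing $u$ in amplitude-phase form, the amplitude is nearly constant across a window while the phase oscillates. An oscillating function $a\cos\phi$ with slowly varying $a$ and $\phi'\gg1/L$ has $L^2$ mass on any set of measure $\gamma L$ comparable to $\gamma$ times its total, up to errors controlled by $\norm{g}$.
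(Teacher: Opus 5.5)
The genuine gap is in the sufficiency half, at the step ``by the Burq--Zworski/Miller equivalence this gives observability at any time $T>0$''. A Hautus-type estimate $\norm u_2^2\le C(\norm{(H-\lambda)u}_2^2+\norm u_{L^2(E)}^2)$ with $C$ independent of $\lambda$ gives observability only for $T>\pi\sqrt C$ (this is \cref{prop:abstract-time}). The arbitrary-time versions of those criteria require the coefficient of $\norm{(H-\lambda)u}_2^2$ to tend to zero at high energy.

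In fact the uniform estimate is \emph{equivalent} to observability at some time. The converse half of \cref{prop:abstract-time} derives it from observability at a single $T$ by a one-line Duhamel bound. So it cannot by itself separate ``some time'' from ``every time''. For the harmonic-oscillator examples with half-space-type regions cited in the introduction, the uniform estimate holds because they are observable at some time, yet the minimal time is positive.

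Nor can you improve your estimate to a decaying one for this operator. Take the $(1,1/2)$-thick set $E_{\mathrm{per}}$ and $u_j=\chi(\cdot-j)$ with $\chi\in C_c^\infty((1/2,1))$, $\norm\chi_2=1$. These functions vanish on $E_{\mathrm{per}}$, yet $\norm{(H-j)u_j}_2=\norm{-\chi''+y\chi}_2$ for every $j$ (\cref{prop:no-decay}). Hence your plan proves (1)$\Rightarrow$(3), and \eqref{eq:uniform-anytime} only for $T>\pi\sqrt{C(L,\gamma)}$. The implication (1)$\Rightarrow$(2) and the small-time uniform bound, which are the main content of the theorem, are not reached, and no refinement of the stationary window analysis can reach them.

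What is missing is a dynamical argument that rules out such turning-region quasimodes at small times. The paper keeps them explicitly. \Cref{prop:turning} makes the residual coefficient $C_1/R$ at the cost of the remainder $3\int_{\{||x|-\lambda|\le2R\}}|u|^2$, and the paper then argues by contradiction:
\begin{enumerate}
 \item Applied to the time Fourier transform of $\chi(t)\e^{-itH}f_j$, the estimate puts a fixed amount of mass in $\{|\tau+|x||\le2R\}$.
 \item After modulation by $\e^{it|m|}$ on unit cells, a weighted selection of centres, translation and normalization, one gets a weak limit solving the equation with potential $|m_*+y|-|m_*|$ or $\pm y$.
 \item After a temporal Fourier multiplier one gets local strong compactness, and a weak-star limit of the translated indicators that is still thick.
 \item The multiplier is removed using analyticity of Fourier transforms of compactly supported time functions.
 \item The argument closes with unique continuation from a positive-measure spatial set for the time-dependent equation (\cref{prop:measurable-ucp}).
\end{enumerate}

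Two smaller remarks. First, your necessity argument is essentially the paper's, and it simplifies if you choose holes with $|E\cap(z_k-k,z_k+k)|<1/k$, which non-thickness always allows. Bounded centres then force $|E|=0$, so the kink at $0$ never has to be analysed; a localization estimate alone would not bound the mass on $E\cap I$ there anyway. A fixed-width Gaussian, compared by Duhamel with the explicit linear-potential evolution, then suffices without averaging. Second, in the stationary estimate, thickness does not persist at finer scales (the gaps of $E_{\mathrm{per}}$ have length $1/2$), and it is not needed. Only the bound $|\{x\in(0,L):|\cos(kx-\theta)|<\epsilon\}|\le10\epsilon L$ for $kL\ge1$ (\cref{lem:trig}) is used, together with variation of constants. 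That makes WKB, Pr\"ufer variables and Airy analysis unnecessary.
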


The uniform assertion \eqref{eq:uniform-anytime} includes all measurable sets with the same thickness parameters.  The initial data are elements of \(L^2(\R)\). The proof of sufficiency is a contradiction argument and gives existence.

The theorem and \eqref{eq:Tmin} imply
\begin{equation}
 \Tmin(E)=
 \begin{cases}
  0,&E\text{ is thick},\\
  +\infty,&E\text{ is not thick}.
 \end{cases}
 \label{eq:Tmin-dichotomy}
\end{equation}
Thus a finite strictly positive minimal observation time does not occur for this operator. This conclusion differs from the harmonic-oscillator examples in~\cite[Theorem~1.4]{HWW22}, where a positive minimal time occurs for half-space-type observation regions.

We also prove a resolvent estimate that is uniform in the real spectral parameter. As usual in observability theory, this terminology refers to an a priori inequality for \(H-\lambda\), including real parameters in the spectrum.
\begin{proposition}\label{prop:resolvent-intro}
For every \(L>0\) and \(0<\gamma\le1\), there is \(C_0=C_0(L,\gamma)\ge1\) such that every \((L,\gamma)\)-thick set satisfies
\begin{equation}
 \norm u_2^2\le C_0\left(
       \norm{(H-\lambda)u}_2^2+\norm u_{L^2(E)}^2\right),
 \qquad \lambda\in\R,\quad u\in D(H).
 \label{eq:main-resolvent}
\end{equation}
\end{proposition}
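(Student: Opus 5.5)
We argue by contradiction. The claim is uniform in \(\lambda\in\R\) and in all \((L,\gamma)\)-thick sets. The key tool is the one-dimensional Logvinenko--Sereda/Kovrijkine-type estimate for functions of bounded local oscillation. Suppose \eqref{eq:main-resolvent} fails for every \(C_0\). Then there are \((L,\gamma)\)-thick sets \(E_n\), reals \(\lambda_n\), and \(u_n\in D(H)\) with
\[
\norm{u_n}_2=1,\qquad \norm{(H-\lambda_n)u_n}_2\to0,\qquad \norm{u_n}_{L^2(E_n)}\to0 .
\]
Since \(H\ge0\), we have \(\ip{(H-\lambda_n)u_n}{u_n}\ge-\lambda_n\). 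Hence \(\lambda_n\) is bounded below, and we may pass to a subsequence with either \(\lambda_n\to\lambda_*\in\R\) or \(\lambda_n\to+\infty\).

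\emph{Bounded case.} Here \(\mathfrak h[u_n]=\lambda_n+o(1)\) is bounded. Since \(\QH\) embeds compactly in \(L^2(\R)\) (the weight \(|x|\to\infty\)), a subsequence satisfies \(u_n\to u\) in \(L^2\) with \(\norm u_2=1\). In the limit \(Hu=\lambda_*u\), so \(u\) is an eigenfunction: a combination of Airy functions \(\mathrm{Ai}(|x|-\lambda_*)\), real-analytic on each half-line and nonzero on every interval. The sets \(E_n\) need not converge. However, the indicators \(\ind_{E_n}\) converge weak-\(*\) in \(L^\infty\) to some \(0\le\theta\le1\). Thickness passes to the limit as \(\int_x^{x+L}\theta\ge\gamma L\). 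We also have \(\int\theta|u|^2=\lim\int\ind_{E_n}|u_n|^2=0\), using the strong convergence \(|u_n|^2\to|u|^2\) in \(L^1\). This forces \(u=0\) on a set of positive measure in \([0,L]\), which contradicts unique continuation for the Airy ODE.

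\emph{High-energy case \(\lambda_n\to\infty\).} This is the main obstacle, because compactness is lost. I would localize with a partition of unity \(\chi_k\) at scale \(\sim L\), or larger but fixed. On each window \(I_k\), the function \(v=\chi_ku_n\) satisfies \(-v''+(|x|-\lambda_n)v=g_k\), where \(g_k\) collects \((H-\lambda_n)u_n\) and commutator terms \(\chi_k'u_n'\), \(\chi_k''u_n\). I would split the windows into two kinds.

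\emph{Classically forbidden windows}, with \(|x|-\lambda_n\gtrsim1\). Multiplying by \(\bar v\) and integrating gives the elliptic bound \(\norm v\le C\norm{g_k}\), so these windows carry little mass once the commutator terms are controlled. To control them I would use \(\norm{u_n'}_{L^2(I_k)}\lesssim(1+|\lambda_n-|x||)^{1/2}\) locally, which comes from the equation.

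\emph{Oscillatory region}, with \(\lambda_n-|x|=\mu>0\). On a window of length \(\sim L\) the potential varies only by \(O(L)\). So \(u_n\) is, up to small error, a combination of \(e^{\pm i\sqrt{\mu}x}\) with slowly varying amplitude. More precisely, I would use Prüfer/WKB variables, or a Kovrijkine-type bound for solutions of \(-v''+qv=g\) with \(|q'|\le1\). The target local estimate is
\[
\norm{v}_{L^2(I_k)}^2\le C(L,\gamma)\bigl(\norm{v}_{L^2(E\cap I_k)}^2+\mu^{-1}\norm{g}^2+\text{small}\bigr).
\]
It should be uniform in \(\mu\), because \(|a e^{i\sqrt\mu x}+be^{-i\sqrt\mu x}|^2=|a|^2+|b|^2+2\mathrm{Re}(a\bar b e^{2i\sqrt\mu x})\). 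Averaging this over any set of measure \(\ge\gamma L\) gives at least \(c(\gamma)(|a|^2+|b|^2)\) when \(\mu\) is large, by a Riemann--Lebesgue/rearrangement argument: the oscillatory term is at most \(|a||b|\cdot\) the measure of the set of worst phase. For bounded \(\mu\), near the turning points \(|x|\approx\lambda_n\), I would use the Airy-type normal form. Locally the equation is a compact family of ODEs, and the bounded-case compactness and unique continuation argument applies window by window.

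Summing the local estimates over \(k\) gives \(1\le C\bigl(o(1)+\norm{u_n}_{L^2(E_n)}^2\bigr)\to0\), a contradiction. This summation uses finite overlap and absorbs the commutator terms, which are bounded by \(L^{-1}\) times a local energy. In the proof, the genuinely delicate part is the uniform local estimate near the turning point. It also requires controlling the commutator terms \(\chi_k'u_n'\) relative to \(\sqrt\mu\). I would handle these by choosing window length adapted to \(\mu^{-1/2}\)-scale only where needed, while keeping length \(\ge L\) so that thickness is usable.
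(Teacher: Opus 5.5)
\textbf{Verdict: there is a genuine gap, in the high-energy case.} Your bounded-\(\lambda\) case is correct. The form bound, the compact embedding \(\QH\hookrightarrow L^2\), closedness of \(H\), the weak-\(*\) limit \(\theta\) of \(\ind_{E_n}\), and ODE uniqueness give the contradiction as you describe.

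\textbf{The gap.} It sits exactly at the commutator terms you flag, and the partition of unity causes it. In the oscillatory region, \(u_n'\) has local \(L^2\) size about \(\sqrt\mu\) times that of \(u_n\). So on a window of length \(\ell\) the commutator \(2\chi_k'u_n'\) has norm of order \(\ell^{-1}\sqrt\mu\,\norm{u_n}_{L^2(\tilde I_k)}\), and it is resonant with the homogeneous solutions \(\e^{\pm i\sqrt\mu x}\). Resonant forcing produces a response that grows linearly across the window. Hence, in a local estimate without boundary conditions, the forcing cannot enter as \(\mu^{-1}\norm g^2\) with an \(\ell\)-independent constant.

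For instance, take the model equation \(-v''-\mu v=g\), with \(E\cap I_k=(0,\gamma\ell)\) after translation, and \(v=\varphi(x)\sin(\sqrt\mu x)\) with \(\varphi\) a bump in \((\gamma\ell,\ell)\). Then \(\norm v_{L^2(E)}=0\), \(\norm v^2\sim\ell\) and \(\mu^{-1}\norm g^2\sim\ell^{-1}\), for fixed \(\gamma<1\) and large \(\mu\). So your constant \(C(L,\gamma)\) must grow like \(\ell^2\).

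Consequently the commutator enters with a coefficient of order \(C(\ell,\gamma)\ell^{-2}\), which is bounded below independently of the window length. Adapting or enlarging the windows therefore gives no small parameter. A variation-of-constants or WKB argument actually yields a factor of order \(\gamma^{-3/2}\ell/\sqrt\mu\) in front of \(\norm g\). With that constant the coefficient is of order \(\gamma^{-3}\), and the final summation does not close. The underlying circularity is this: \(\chi_ku_n\) has zero Cauchy data at an end of its window, so Duhamel recovers it from \(g\), whose main part is the commutator itself.

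\textbf{The missing idea: use no cutoff where \(|x|-\lambda\) is bounded above.} One proves a local estimate for the restriction \(u|_I\in H^2(I)\), \(|I|=L\), with no boundary condition. It must be uniform over measurable \(F\subset I\) with \(|F|\ge\gamma L\), and over coefficients \(a+b\) with \(a\le A_0\) and \(\norm b_\infty\le M\).

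\begin{itemize}
\item For bounded \(a\), the paper uses your own compactness argument on the fixed interval. The ingredients are \(H^2\) bounds from \(\norm u\) and \(\norm{u''}\) via interpolation, weak-\(*\) limits of \(\ind_{F_j}\), and ODE uniqueness.
\item For \(a\to-\infty\), it uses variation of constants together with the sublevel-set bound \(\int_F|A\cos kx+B\sin kx|^2\ge c\gamma^3L(|A|^2+|B|^2)\). Riemann--Lebesgue is not uniform over all such \(F\); the sublevel-set count is what works.
\end{itemize}

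Apply this estimate on the disjoint intervals \((jL,(j+1)L)\) that meet \(\{|x|-\lambda\le2\}\). The right side then contains only \(\norm{(H-\lambda)u}_{L^2(I_j)}\), and summation is trivial.

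A cutoff \(\chi\) is used only on \(\{|x|-\lambda\ge1\}\). There the form identity \(\int|(\chi u)'|^2+\int(|x|-\lambda)\chi^2|u|^2=\operatorname{Re}\int f\chi^2\overline u+\int|\chi'|^2|u|^2\) shows that the commutator costs only \(\int_{\{1<|x|-\lambda<2\}}|u|^2\). That quantity is already controlled by the first region, and no bound on \(u'\) is needed. This route gives the estimate directly for every \(\lambda\), with no contradiction argument and no Airy normal form.
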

\begin{corollary}\label{cor:spectral-intro}
Under the assumptions of \cref{prop:resolvent-intro}, put \(\varepsilon=(2C_0)^{-1/2}\). For every real \(\lambda\),
\begin{equation}
 \norm u_{L^2(E)}^2\ge\frac1{2C_0}\norm u_2^2,
 \qquad
 u\in\Ran\ind_{[\lambda-\varepsilon,\lambda+\varepsilon]}(H).
 \label{eq:cluster-intro}
\end{equation}
\end{corollary}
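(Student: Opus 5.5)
The statement to prove is Corollary~\ref{cor:spectral-intro}. The plan is to apply \cref{prop:resolvent-intro} directly to spectral cluster vectors and absorb the resolvent term. Fix \(\lambda\in\R\) and \(u\in\Ran\ind_{[\lambda-\varepsilon,\lambda+\varepsilon]}(H)\). The first point is that \(u\in D(H)\). Since the spectral projection is onto a bounded interval, the functional calculus gives
\[
\int_\R |\mu|^2\dd\norm{P_\mu u}^2=\int_{[\lambda-\varepsilon,\lambda+\varepsilon]}|\mu|^2\dd\norm{P_\mu u}^2<\infty ,
\]
where \(P_\mu\) denotes the spectral resolution of \(H\). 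Hence \(u\in D(H)\) and \eqref{eq:main-resolvent} is applicable.

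Next I bound the resolvent term by the spectral theorem:
\[
\norm{(H-\lambda)u}_2^2=\int_{[\lambda-\varepsilon,\lambda+\varepsilon]}|\mu-\lambda|^2\dd\norm{P_\mu u}^2\le\varepsilon^2\norm u_2^2=\frac{1}{2C_0}\norm u_2^2 .
\]
Inserting this into \eqref{eq:main-resolvent} gives
\[
\norm u_2^2\le C_0\cdot\frac1{2C_0}\norm u_2^2+C_0\norm u_{L^2(E)}^2=\frac12\norm u_2^2+C_0\norm u_{L^2(E)}^2 .
\]
Since \(\norm u_2<\infty\), the term \(\tfrac12\norm u_2^2\) can be absorbed into the left-hand side. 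This yields \(\tfrac12\norm u_2^2\le C_0\norm u_{L^2(E)}^2\), which is exactly \eqref{eq:cluster-intro}.

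There is no genuine obstacle. The only points needing care are the domain membership (handled by the bounded spectral window) and the fact that \(C_0\) from \cref{prop:resolvent-intro} is independent of \(\lambda\). That independence makes both \(\varepsilon\) and the final constant uniform in \(\lambda\) and in all \((L,\gamma)\)-thick sets \(E\). The choice \(\varepsilon=(2C_0)^{-1/2}\) is made precisely so that the absorbed fraction equals \(1/2\).
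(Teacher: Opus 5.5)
Your proposal is correct and is essentially the paper's own proof. Both use the spectral theorem to get \(u\in D(H)\) and \(\norm{(H-\lambda)u}_2^2\le\varepsilon^2\norm u_2^2\), then insert this into \eqref{eq:main-resolvent} with \(C_0\varepsilon^2=1/2\) and absorb half of \(\norm u_2^2\) into the left-hand side.
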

The proof and the converse spectral implication are given in \cref{sec:consequences}.

The estimate \eqref{eq:main-resolvent} alone gives
\begin{equation}
 \norm f_2^2\le
 \frac{2C_0T}{T^2-\pi^2C_0}
 \int_0^T\norm{\e^{-itH}f}_{L^2(E)}^2\dd t,
 \qquad T>\pi\sqrt{C_0}.
 \label{eq:main-time}
\end{equation}
The time-Fourier argument, including the converse resolvent implication, is given in \cref{sec:time}. The threshold in \eqref{eq:main-time} is only a sufficient time for that argument. It does not yield \eqref{eq:Tmin-dichotomy}. The latter requires the additional compactness argument in \cref{sec:arbitrary-time}.

\subsection{Strategy and comparison with related work}
\subsubsection{A resolvent estimate with a spatial remainder}
The local estimate used below has two parts. When the coefficient in the ordinary differential equation is uniformly bounded, a compactness argument on an interval gives an estimate from every measurable subset of a prescribed positive measure. When its constant part tends to minus infinity, variation of constants reduces the argument to a sine and cosine combination. The latter estimate is closely related to~\cite[Lemmas~4.1--4.2]{SSY25}.

For arbitrary-time observation, a fixed residual coefficient in \eqref{eq:main-resolvent} is insufficient. On the other hand, for the periodic set \eqref{eq:periodic-intro}, the coefficient cannot be replaced in a global estimate by a function tending to zero with the spectral parameter; see \cref{prop:no-decay}. We instead prove
\begin{equation}
 \begin{split}
 \norm u_2^2\le{}&C\norm u_{L^2(E)}^2+\frac CR\norm{(H-\lambda)u}_2^2\\
 &+3\int_{\{\,||x|-\lambda|\le2R\,\}}|u(x)|^2\dd x,
 \qquad R\ge R_*(L,\gamma).
 \end{split}
 \label{eq:turning-intro}
\end{equation}
The constants \(C\) and \(R_*\) depend only on the thickness parameters. The last term is retained throughout the proof. Away from this region, the oscillatory estimate and the quadratic-form identity give the factor \(R^{-1}\) in the residual term.

\subsubsection{Compactness after translation and a temporal Fourier multiplier}
Assume that \eqref{eq:uniform-anytime} fails at a fixed \(T\). Applying \eqref{eq:turning-intro} to the time Fourier transform of \(\chi(t)\e^{-itH}f_j\), where \(\chi\) has compact support in \((0,T)\), shows that a positive amount of its mass lies in \(|\tau+|x||\le2R\). On a unit interval centered at an integer \(m\), multiplication by \(\e^{it|m|}\) places this mass in a fixed bounded interval of temporal frequencies. A weighted selection of the centers and normalization factors then gives translated solutions bounded in a weighted space-time \(L^2\) space.

The translated potentials converge locally to a translate of \(|x|\) minus a constant, or to \(x\) or \(-x\). The solutions have a weak limit, whereas application of the temporal Fourier multiplier gives local strong compactness. A separate estimate of the time tails preserves a nonzero limit. Thickness passes to a weak-star limit of the translated characteristic functions. This implies that the function obtained after the Fourier multiplier vanishes on a positive-measure spatial set. Since its input has compact time support, the identity theorem for entire functions removes the multiplier on this zero set. Unique continuation is then applied to the original weak limit, which satisfies the homogeneous equation.

This argument uses only local regularity of the translated solutions. Their global unweighted \(L^2\) norms need not remain bounded after normalization. In particular, the compact resolvent of \(H\) is not used to infer compactness of the translated sequence.

\subsubsection{Necessity and the precise relation to the open question}
If \(E\) is not thick, it has intervals of increasing length on which the observed measure tends to zero. Unless \(E\) has measure zero, the centers of these intervals tend to infinity along a subsequence. We translate a fixed Gaussian to these centers. After subtracting a constant potential, the translated evolution is compared with the explicit equation having potential \(x\) or \(-x\). The error is estimated in \(L^2\) by Duhamel's formula and Gaussian tails. This constructs actual solutions with vanishing observation on any prescribed finite time interval.

\Cref{thm:main} gives the affirmative answer to~\cite[Remark~3.8]{HWW26} at every positive time and proves the converse geometric implication. In comparison with~\cite[Theorem~1.4(i)]{HWW26}, necessity is strengthened from weak thickness to thickness. In comparison with~\cite[Theorem~1.3]{HWW26}, no decay assumption on the complement remains in the qualitative conclusion.

The method also has a different quantitative scope. Huang--Wang--Wang construct an Ingham-type inequality for spectral clusters, then combine high- and low-frequency estimates to obtain explicit small-time costs. Their half-line counterexample uses the interaction between eigenfunctions and Toeplitz matrices. We do not use those spectral-cluster or Toeplitz estimates in the main proof. Conversely, the present compactness argument does not supply their explicit cost bounds. The distinction between the class of observable sets and the size of the observation constant is therefore maintained.

Resolvent methods for conservative systems are developed in~\cite{Miller05,RTTT05,TW09}. The unique-continuation input in \cref{sec:unique} is a specialization of the open-set theorem of Filippas--Laurent--L\'eautaud~\cite{FLL25}; the reduction from a measurable spatial set is proved in detail. For completeness, \cref{sec:direct-ucp} supplies a direct proof of the needed one-dimensional statement by time regularization and an integral equation. Neither formulation is used as a spectral condition on individual eigenfunctions.

\paragraph{\textbf{Relation to Wang's questions on observation geometry and time.}}
In~\cite[Section~3, questions~(Q1) and~(Q2)(ii)]{Wang26}, Wang asks
how the geometry of observable sets is related to the admissible observation
times and to the minimal observation time, with particular emphasis on
evolution equations in the whole space. For the fixed one-dimensional
operator \(H=-\partial_x^2+|x|\), Theorem~\ref{thm:main} gives a complete
geometric characterization for cylindrical observation regions
\((0,T)\times E\): at each prescribed time \(T>0\), the observable measurable
sets are precisely the thick sets. Consequently,
\[
 T_{\min}(E)=
 \begin{cases}
  0,&\text{if }E\text{ is thick},\\
  +\infty,&\text{if }E\text{ is not thick}.
 \end{cases}
\]
Thus the class of observable spatial sets does not change with the positive
observation time, although the observation constant may depend on that time.
More specifically, the theorem answers the question about thick sets posed
in~\cite[Remark~3.8]{HWW26}, and removes the decay assumption on the complement
from the qualitative arbitrary-time assertion. This provides a complete
answer in the particular cylindrical setting considered here, rather than
a characterization for general time-dependent observation regions, arbitrary
potentials, or higher dimensions. It is also distinct from question~(Q2)(iii)
in~\cite{Wang26}, which concerns the harmonic oscillator.
\subsection{Plan of the paper}
In \cref{sec:preliminaries} we give the operator realization, local regularity facts, and the time-Fourier tools needed for the proof. \Cref{sec:local} proves the local estimates on measurable subsets of an interval. \Cref{sec:global} proves the global resolvent inequalities, including \eqref{eq:turning-intro}. \Cref{sec:unique} establishes unique continuation from a measurable spatial set. The uniform sufficient assertion of \cref{thm:main} is proved in \cref{sec:arbitrary-time}, and its necessary assertion in \cref{sec:necessity}. \Cref{sec:consequences} gives the spectral reformulation and examples. \Cref{sec:time,sec:direct-ucp} contain, respectively, the fixed-resolvent time argument and the direct one-dimensional unique-continuation proof.

\section{The operator and preliminary facts}\label{sec:preliminaries}

\subsection{Conventions}
All function spaces are over \(\C\) unless otherwise specified. We write \(\norm{\cdot}_2\) for the norm in \(L^2(\R)\), and \(\norm{\cdot}_{L^2(I)}\) or \(\norm{\cdot}_{L^2(F)}\) for restrictions to a measurable interval or subset. The inner product is linear in its first argument. Sets that differ by a Lebesgue-null set are identified for purposes of observation and thickness. Constants denoted by \(C\) may be enlarged from line to line; their stated parameter dependence is retained. Constants that enter a subsequent time threshold are given a fixed symbol.

The unitary Fourier transform is
\begin{equation}
 \widehat f(\xi)=(2\pi)^{-1/2}\int_{\R}\e^{-ix\xi}f(x)\dd x.
 \label{eq:fourier}
\end{equation}
We use the same sign convention for the time Fourier transform. Thus \(\mathcal F_t(i\partial_t w)(\tau)=-\tau\widehat w(\tau)\). All finite-time observation operators are bounded because
\begin{equation}
 \int_0^T\norm{\ind_E\e^{-itH}f}_2^2\dd t
 \le T\norm{f}_2^2.
 \label{eq:bounded-observation}
\end{equation}

\subsection{The closed form and its associated operator}
We use the standard closed-form construction of self-adjoint operators; see, for example, Davies~\cite{Davies95}. For clarity, we verify the properties needed in this paper.

\begin{lemma}\label{lem:form}
The space \(\QH\) in \eqref{eq:form-intro}, with norm
\begin{equation}
 \norm{u}_{\QH}^2=\norm{u}_2^2+\norm{u'}_2^2+
                       \norm{|x|^{1/2}u}_2^2,
 \label{eq:form-norm}
\end{equation}
is a Hilbert space, and \(C_c^\infty(\R)\) is dense in it. The form \(\mathfrak h\) is densely defined, closed and nonnegative. Its associated operator satisfies
\begin{equation}
 \begin{split}
 D(H)=\{u\in\QH:
 &\text{there exist }f\in L^2(\R)\text{ such that}\\
 &\mathfrak h(u,v)=\ip f v\text{ for every }v\in\QH\},
 \qquad Hu=f,
 \end{split}
 \label{eq:domain-form}
\end{equation}
where \(\mathfrak h(u,v)=\int u'\overline{v'}+\int |x|u\overline v\).
Moreover,
\begin{equation}
 u\in D(H)\quad\Longrightarrow\quad
 u\in H^2_{\mathrm{loc}}(\R),\qquad Hu=-u''+|x|u\ \text{in distributions}.
 \label{eq:local-reg}
\end{equation}
Conversely, if \(u\in\QH\) and \(-u''+|x|u\in L^2(\R)\) in distributions, then \(u\in D(H)\) and this distribution equals \(Hu\).
\end{lemma}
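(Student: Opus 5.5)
The proof is a routine verification in four parts: completeness, density, closedness of the form, and local regularity.

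\emph{Completeness.} The norm \eqref{eq:form-norm} comes from the inner product
\[
\ip uv_{\QH}=\ip uv+\ip{u'}{v'}+\ip{|x|^{1/2}u}{|x|^{1/2}v},
\]
so only completeness needs checking. Let \((u_n)\) be Cauchy in \(\QH\). Then \(u_n\to u\) in \(H^1(\R)\) and \(|x|^{1/2}u_n\to g\) in \(L^2(\R)\). Passing to an a.e.\ convergent subsequence gives \(g=|x|^{1/2}u\) a.e. Hence \(u\in\QH\) and \(u_n\to u\) in \(\QH\).

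\emph{Density of \(C_c^\infty(\R)\).} We argue in two steps, first truncating and then mollifying.
\begin{itemize}
\item[(i)] Take \(\eta_R(x)=\eta(x/R)\) with \(\eta\in C_c^\infty\), \(\eta=1\) near \(0\). Dominated convergence gives \(\eta_Ru\to u\) in all three seminorms. The extra term \(R^{-1}\eta'(x/R)u\) tends to \(0\) in \(L^2\).
\item[(ii)] For compactly supported \(u\in\QH\), mollification converges in \(H^1\). Since the supports stay in a fixed compact set, where \(|x|\) is bounded, it also converges in the weighted norm.
\end{itemize}
In particular \(\QH\) is dense in \(L^2\), so \(\mathfrak h\) is densely defined, and \(\mathfrak h\ge0\) is clear.

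\emph{Closedness and the associated operator.} Closedness is exactly completeness of \(\QH\) under \(\mathfrak h[u]+\norm u_2^2\), which equals \eqref{eq:form-norm}. The representation theorem for closed nonnegative forms (Kato, Davies~\cite{Davies95}) then yields a nonnegative self-adjoint operator with domain and action given by \eqref{eq:domain-form}. For the reader one can sketch this via Riesz: for \(f\in L^2\) there is a unique \(u\in\QH\) with \(\mathfrak h(u,v)+\ip uv=\ip fv\) for all \(v\). The resulting map \(f\mapsto u\) is bounded, injective, self-adjoint and positive, and its inverse minus the identity is \(H\).

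\emph{Local regularity \eqref{eq:local-reg}.} Test \eqref{eq:domain-form} with \(v\in C_c^\infty\) to get
\[
\int u'\overline{v'}=\int (f-|x|u)\overline v .
\]
This says that \(-u''=f-|x|u\) in distributions. The right-hand side lies in \(L^2_{\mathrm{loc}}\), since \(|x|\) is locally bounded, so \(u''\in L^2_{\loc}\) and \(u\in H^2_{\loc}\).

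\emph{Converse statement.} Suppose \(u\in\QH\) and \(g:=-u''+|x|u\in L^2\). The identity \(\mathfrak h(u,v)=\ip gv\) holds for \(v\in C_c^\infty\) by the definition of distributional derivatives. Both sides are continuous in \(v\) for the \(\QH\)-norm; for the left side this is Cauchy--Schwarz in each term. By the density just proved, the identity extends to all \(v\in\QH\). Hence \(u\in D(H)\) and \(Hu=g\).

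The only step needing care is density, specifically the truncation step for the weighted term. Dominated convergence handles it because \(|x||u|^2\in L^1\).
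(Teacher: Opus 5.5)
Your proposal is correct and follows essentially the same route as the paper. Both argue completeness via an a.e.\ convergent subsequence, density by truncation with $\eta(x/R)$ followed by mollification, and the closed-form representation theorem; local $H^2$ regularity comes from testing against $C_c^\infty(\R)$, and the converse from extending $\mathfrak h(u,v)=\ip{g}{v}$ from test functions to all of $\QH$ by density. Your Riesz-map sketch of the associated operator only spells out the representation theorem that the paper cites.
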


\begin{proof}
If \((u_j)\) is Cauchy in \eqref{eq:form-norm}, it converges in \(H^1(\R)\) to a function \(u\), while \(|x|^{1/2}u_j\) converges in \(L^2(\R)\) to a function \(w\). By taking subsequences with almost everywhere convergence, \(w=|x|^{1/2}u\) almost everywhere. Thus \(u\in\QH\) and the convergence holds in the form norm. This proves completeness.

To check density, choose \(\zeta\in C_c^\infty(\R)\) equal to one on \([-1,1]\) and set \(\zeta_R(x)=\zeta(x/R)\). Dominated convergence gives
\[
 \norm{(1-\zeta_R)u}_2+
 \norm{(1-\zeta_R)u'}_2+
 \norm{|x|^{1/2}(1-\zeta_R)u}_2\longrightarrow0.
\]
The additional derivative term satisfies \(\norm{\zeta_R'u}_2\le C R^{-1}\norm u_2\to0\). Hence \(\zeta_Ru\to u\) in \(\QH\). For fixed \(R\), convolution with a smooth compactly supported approximate identity approximates \(\zeta_Ru\) in \(H^1\) with support in a fixed compact interval. On that interval the weight \(|x|\) is bounded, so the convergence also holds in the weighted \(L^2\) term. This proves the asserted density.

The closed-form representation theorem gives the nonnegative self-adjoint operator described in \eqref{eq:domain-form}. Testing against \(C_c^\infty(\R)\) yields the distributional identity in \eqref{eq:local-reg}. On any bounded interval, both \(|x|u\) and \(Hu\) belong to \(L^2\), so \(u''\in L^2\) locally. Together with \(u\in H^1\), this gives \(H^2_{\mathrm{loc}}\) regularity. In particular, no independent boundary or transmission condition is imposed at zero: both \(u\) and \(u'\) have the continuous representatives supplied by local \(H^2\) regularity.

Conversely, let \(f=-u''+|x|u\in L^2\) with \(u\in\QH\). Integration by parts against smooth compactly supported functions gives \(\mathfrak h(u,v)=\ip f v\). Both sides are continuous in \(v\) for the form norm. Density extends the identity to every \(v\in\QH\), and \eqref{eq:domain-form} proves the converse.
\end{proof}

\begin{lemma}\label{lem:multipliers}
If \(u\in D(H)\) and \(\chi\in W^{1,\infty}(\R)\) is real-valued, then \(\chi u,\chi^2u\in\QH\). For \(\lambda\in\R\) and \(f=(H-\lambda)u\), one has
\begin{equation}
 \int_{\R}|(\chi u)'|^2\dd x+
 \int_{\R}(|x|-\lambda)\chi^2|u|^2\dd x
 =\operatorname{Re}\int_{\R}f\chi^2\overline u\dd x
   +\int_{\R}|\chi'|^2|u|^2\dd x.
 \label{eq:cutoff-identity}
\end{equation}
\end{lemma}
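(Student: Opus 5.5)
We prove the two membership claims first and then derive \eqref{eq:cutoff-identity} by testing the form identity \eqref{eq:domain-form} with \(v=\chi^2u\) and rearranging.

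\emph{Step 1: \(\chi u\in\QH\).} Since \(u\in D(H)\subset\QH\) and \(\chi\in W^{1,\infty}\), we have \(\chi u\in L^2\) and \(|x|^{1/2}\chi u\in L^2\). Moreover \((\chi u)'=\chi'u+\chi u'\in L^2\) by the product rule for \(W^{1,\infty}\cdot H^1\). The product rule is valid because \(u\in H^1\) and \(\chi\) is Lipschitz; one can check it by approximating \(\chi\) by mollifications \(\chi_\varepsilon\), which converge locally uniformly with \(\chi_\varepsilon'\to\chi'\) weak-star. The function \(\chi^2\) is again real and lies in \(W^{1,\infty}\), with \((\chi^2)'=2\chi\chi'\), so the same argument gives \(\chi^2u\in\QH\).

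\emph{Step 2: test the form identity.} Take \(v=\chi^2u\) in \eqref{eq:domain-form}. This gives
\[
 \int u'\,\overline{(\chi^2u)'}\dd x+\int|x|\chi^2|u|^2\dd x=\ip{Hu}{\chi^2u}.
\]
Subtract \(\lambda\int\chi^2|u|^2\dd x=\ip{\lambda u}{\chi^2u}\) from both sides. The right-hand side becomes \(\int f\chi^2\overline u\dd x\), and we then take real parts. Note that \(\int|x|\chi^2|u|^2\) is finite since \(u\in\QH\) and \(\chi\) is bounded.

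\emph{Step 3: the algebraic identity.} Since \(\chi\) is real, \(\overline{(\chi^2u)'}=2\chi\chi'\overline u+\chi^2\overline{u'}\). Hence
\[
 \operatorname{Re}\int u'\,\overline{(\chi^2u)'}\dd x
 =\int\chi^2|u'|^2\dd x+\int2\chi\chi'\operatorname{Re}(u'\overline u)\dd x.
\]
On the other hand,
\[
 |(\chi u)'|^2=|\chi'|^2|u|^2+\chi^2|u'|^2+2\chi\chi'\operatorname{Re}(u\overline{u'}).
\]
Comparing the two expressions,
\[
 \operatorname{Re}\int u'\,\overline{(\chi^2u)'}\dd x=\int|(\chi u)'|^2\dd x-\int|\chi'|^2|u|^2\dd x.
\]
All integrals are absolutely convergent because \(u,u'\in L^2\) and \(\chi,\chi'\in L^\infty\). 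Inserting this into the real part of the tested identity from Step 2 gives exactly \eqref{eq:cutoff-identity}.

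\emph{Main obstacle.} The only nonalgebraic point is the product rule and membership for a merely Lipschitz \(\chi\). This is handled by mollification, or equivalently by the standard fact that \(W^{1,\infty}\) functions act as multipliers on \(H^1\).
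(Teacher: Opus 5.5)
Your proposal is correct and follows the paper's proof: test \eqref{eq:domain-form} with \(v=\chi^2u\), subtract \(\lambda\ip{u}{\chi^2u}\), take real parts, and expand \(|(\chi u)'|^2\) to absorb the cross term \(2\operatorname{Re}\int\chi\chi'u'\overline u\). The only difference is that you justify the product rule for a merely Lipschitz \(\chi\) by mollification. The paper dismisses that step with the remark that \(\chi\) and \(\chi'\) are bounded.
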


\begin{proof}
Boundedness of \(\chi\) and \(\chi'\) gives the \(H^1\) and weighted \(L^2\) properties. Therefore \(\chi^2u\) is a permissible test function in \eqref{eq:domain-form}. Taking real parts in that identity, with \(\lambda\ip u{\chi^2u}\) subtracted, gives
\[
 \int\chi^2|u'|^2+2\operatorname{Re}\int\chi\chi'u'\overline u
       +\int(|x|-\lambda)\chi^2|u|^2
 =\operatorname{Re}\int f\chi^2\overline u.
\]
Expansion of \(|(\chi u)'|^2\) gives \eqref{eq:cutoff-identity}. All integrals are finite by the form-domain assumptions. Compact support of \(\chi\) is not required, and no boundary term at infinity is introduced.
\end{proof}

The unitary group \(U(t)=\e^{-itH}\) is strongly continuous on \(L^2(\R)\) and preserves \(D(H)\). For \(u\in D(H)\), the map \(t\mapsto U(t)u\) is continuously differentiable in \(L^2\) and satisfies \(i\partial_tU(t)u=HU(t)u\). These standard consequences of self-adjointness will also be used for unitary conjugates of \(H\).

\begin{lemma}\label{lem:compact}
The embedding \(\QH\hookrightarrow L^2(\R)\) is compact. Consequently \((H+1)^{-1}\) is compact, and \(H\) has an orthonormal basis of eigenfunctions with eigenvalues of finite multiplicity tending to infinity.
\end{lemma}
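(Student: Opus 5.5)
To prove the compactness of the embedding \(\QH\hookrightarrow L^2(\R)\), I will use the standard tail-plus-Rellich argument. Take a bounded sequence \((u_j)\) in \(\QH\), say \(\norm{u_j}_{\QH}\le M\). The weight gives uniform smallness of the tails:
\[
 \int_{|x|>R}|u_j|^2\dd x\le R^{-1}\int_{\R}|x||u_j|^2\dd x\le M^2R^{-1}.
\]
On each bounded interval \((-R,R)\), the restrictions are bounded in \(H^1(-R,R)\). By Rellich--Kondrachov in one dimension (or Arzel\`a--Ascoli, since \(H^1\) functions on an interval are uniformly H\"older-\(1/2\) with bounded sup norm), the restrictions are precompact in \(L^2(-R,R)\).

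A diagonal argument over \(R=1,2,3,\dots\) produces a subsequence converging in \(L^2(-R,R)\) for every \(R\). The tail bound then shows that this subsequence is Cauchy in \(L^2(\R)\). Given \(\varepsilon>0\), choose \(R\) with \(4M^2/R<\varepsilon^2/2\). Then choose \(j,k\) large so that the \(L^2(-R,R)\) difference is below \(\varepsilon^2/2\). Alternatively, one can approximate the embedding in operator norm by the compact operators \(u\mapsto \ind_{(-R,R)}u\), since the error is at most \(R^{-1/2}\).

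For the resolvent, recall from \cref{lem:form} that \(H\ge0\). The form identity gives, for \(u\in D(H)\),
\[
 \norm{u}_{\QH}^2=\norm u_2^2+\mathfrak h[u]=\ip{(H+1)u}{u}.
\]
Put \(u=(H+1)^{-1}f\). Since \(\norm{(H+1)^{-1}}\le1\), we obtain \(\norm{u}_{\QH}^2\le\norm f_2\norm u_2\le\norm f_2^2\). Thus \((H+1)^{-1}\) is bounded from \(L^2\) into \(\QH\). Composing with the compact embedding shows that \((H+1)^{-1}\) is compact on \(L^2(\R)\).

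The spectral conclusion follows from the spectral theorem for compact self-adjoint operators. The operator \((H+1)^{-1}\) is compact, self-adjoint, positive and injective. Hence \(L^2(\R)\) has an orthonormal basis of its eigenvectors, with eigenvalues \(\mu_n\in(0,1]\) of finite multiplicity accumulating only at \(0\). The same vectors are eigenfunctions of \(H\) with eigenvalues \(\mu_n^{-1}-1\). These have finite multiplicity and tend to \(+\infty\), because \(L^2(\R)\) is infinite-dimensional and therefore infinitely many \(\mu_n\) occur.

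There is no genuine obstacle here. The only point needing care is the uniform tail estimate, which is exactly where the confining weight \(|x|\) in \(\QH\) is used.
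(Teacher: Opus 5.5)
Your proof is correct and matches the paper's argument: local $H^1$ bounds with a diagonal extraction, the uniform tail estimate $\int_{|x|>R}|u_j|^2\le R^{-1}\int|x||u_j|^2$, boundedness of $(H+1)^{-1}$ from $L^2$ into $\QH$ obtained by testing $(H+1)u=f$ against $u$, and then the spectral theorem for compact self-adjoint operators. Your additional details are accurate expansions of steps the paper leaves brief, namely the operator-norm approximation by $u\mapsto\ind_{(-R,R)}u$ and the explicit passage from the resolvent eigenvalues $\mu_n$ to $\mu_n^{-1}-1$.
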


\begin{proof}
Let \((u_j)\) be bounded in \(\QH\). On each bounded interval it is bounded in \(H^1\), hence has a subsequence converging in \(L^2\) there. A diagonal subsequence converges locally in \(L^2\). Uniformly in \(j\),
\[
 \int_{|x|>R}|u_j|^2\dd x
 \le R^{-1}\int_{\R}|x||u_j|^2\dd x\le C/R.
\]
The local convergence and this tail estimate give convergence in \(L^2(\R)\). The resolvent \((H+1)^{-1}\) maps bounded subsets of \(L^2\) into bounded subsets of \(\QH\), as follows by testing \((H+1)u=f\) against \(u\). Compactness and the spectral theorem finish the proof.
\end{proof}

No asymptotic formula for eigenvalues or eigenfunctions is needed for \cref{thm:main}. Compactness is recorded for the spectral consequences in \cref{sec:consequences}; it is not used to claim that normalized translated solutions are globally compact.

\subsection{Two local facts}
The following interpolation estimate allows a compactness argument on an interval without imposing any boundary condition.

\begin{lemma}\label{lem:interpolation}
For \(L>0\) and \(u\in H^2(0,L)\),
\begin{equation}
 \norm{u'}_{L^2(0,L)}
 \le \frac{\sqrt{12}}{L}\norm u_{L^2(0,L)}
       +(1+\sqrt3)L\norm{u''}_{L^2(0,L)}.
 \label{eq:interval-interpolation}
\end{equation}
In particular, a bound on \(\norm u_2+\norm{u''}_2\) bounds \(u\) in \(H^2(0,L)\).
\end{lemma}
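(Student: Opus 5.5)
We prove \eqref{eq:interval-interpolation} by a mean value argument followed by integration of \(u''\), which avoids any boundary condition. The first step is to find a point where \(u'\) is controlled by \(u\) alone. Split \([0,L]\) into the thirds \(I_1=[0,L/3]\), \(I_3=[2L/3,L]\), and take the averages \(a=\frac3L\int_{I_1}u\) and \(b=\frac3L\int_{I_3}u\). Since \(u\in H^2(0,L)\subset C^1[0,L]\), there are points \(s\in I_1\) and \(t\in I_3\) with \(u(s)=a\) and \(u(t)=b\). This holds for the real and imaginary parts separately, so we first treat real-valued \(u\); alternatively, we can work with the weighted average below, which avoids this issue.

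A cleaner route uses a test function. Choose a smooth or piecewise linear weight \(\phi\) on \([0,L]\) with \(\int_0^L\phi=1\) and \(\phi(0)=\phi(L)=0\). Let \(c=\int_0^L u'\phi=-\int_0^L u\phi'\). By Cauchy--Schwarz, \(|c|\le\norm{\phi'}_{L^2}\norm u_{L^2(0,L)}\). For every \(x\), we have \(u'(x)-c=\int_0^L\phi(y)\bigl(u'(x)-u'(y)\bigr)\dd y\), and \(|u'(x)-u'(y)|\le\int_0^L|u''|\le L^{1/2}\norm{u''}_{L^2}\). Hence \(\norm{u'-c}_{L^\infty}\le L^{1/2}\norm{u''}_{L^2}\) (using \(\phi\ge0\)), and therefore
\[
 \norm{u'}_{L^2(0,L)}\le L^{1/2}|c|+L\norm{u''}_{L^2(0,L)}.
\]
Take the tent function \(\phi(y)=\frac{4}{L^2}\min(y,L-y)\). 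Then \(\int_0^L\phi=1\), \(|\phi'|=4/L^2\) and \(\norm{\phi'}_{L^2}=4L^{-3/2}\). This gives \(\norm{u'}_{L^2}\le\frac4L\norm u_{L^2}+L\norm{u''}_{L^2}\).

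This has the right form, but \(4>\sqrt{12}\approx3.46\). To match the stated constants exactly, we would instead take \(c\) to be the least-squares slope. Let \(p(x)=\alpha+\beta(x-L/2)\) be the \(L^2(0,L)\)-projection of \(u\) onto linear functions. Then \(|\beta|\le\sqrt{12}\,L^{-3/2}\norm u_{L^2}\), since \(\norm{x-L/2}_{L^2}^2=L^3/12\), so \(\norm{\beta}_{L^2(0,L)}\le\frac{\sqrt{12}}L\norm u_{L^2}\). We then need \(\norm{u'-\beta}_{L^2}\le(1+\sqrt3)L\norm{u''}_{L^2}\). Write \(v=u-p\). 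Then \(v\) is orthogonal to the linear functions and \(v''=u''\). Orthogonality to \(1\) and to \(x\) forces \(v'\) to vanish at some point. The reason is that \(\int v=0\) gives a zero of \(v\), and \(\int v\,(x-L/2)=0\) together with integration by parts yields \(\int v'\,\psi=0\) for a nonnegative \(\psi\), so \(v'\) changes sign. The pointwise bound then gives \(\norm{v'}_{L^2}\le L\norm{u''}_{L^2}\). The extra \(\sqrt3 L\) slack in the stated constant absorbs any cruder step, for instance if we only show \(|v'|\) is small at one point via the first construction. In any case, the qualitative conclusion does not depend on the sharp constants.

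For the final assertion, note that \eqref{eq:interval-interpolation} bounds \(\norm{u'}_{L^2}\) by \(\norm u_2+\norm{u''}_2\) on the interval. This gives the full \(H^2(0,L)\) norm.

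The main obstacle is only bookkeeping of the constants. The key idea, reducing to a point where \(u'\) is controlled by \(u\) and then integrating \(u''\), is robust.
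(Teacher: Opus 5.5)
Your final (least-squares) argument is correct and takes a different route from the paper. The paper writes $u=p+w$ with $w(x)=\int_0^x(x-s)u''(s)\dd s$, the Taylor remainder at $0$. It bounds $\norm{w}_2\le\frac{L^2}{2}\norm{u''}_2$ and $\norm{w'}_2\le L\norm{u''}_2$ by Young's inequality, and uses the exact affine estimate $\norm{p'}_2\le\sqrt{12}L^{-1}\norm{p}_2$. The coefficient $(1+\sqrt3)L$ appears because $\norm p_2\le\norm u_2+\norm w_2$ costs an extra $\sqrt{12}L^{-1}\cdot\frac{L^2}{2}=\sqrt3L$.

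You instead take $p$ to be the orthogonal projection onto affine functions, so $\norm p_2\le\norm u_2$ with no loss. You then control $v'=u'-\beta$ through a zero of $v'$. This zero comes from
$0=\int_0^Lv\,(x-L/2)\dd x=\frac12\int_0^Lv'(x)\,x(L-x)\dd x$,
so $v'$ integrates to zero against a positive weight; only orthogonality to $x-L/2$ is needed, not $\int v=0$. This yields the sharper bound $\norm{u'}_2\le\sqrt{12}L^{-1}\norm u_2+L\norm{u''}_2$, which implies the lemma.

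The one step you must repair is the sign-change argument. It is valid only for real-valued functions, while $H^2(0,L)$ is complex here, and a complex $v'$ orthogonal to a positive weight need not vanish anywhere. The fix costs nothing. The projection has real kernels, so it commutes with taking real and imaginary parts. Applying your argument to each part gives $\norm{\operatorname{Re}v'}_2\le L\norm{\operatorname{Re}u''}_2$ and $\norm{\operatorname{Im}v'}_2\le L\norm{\operatorname{Im}u''}_2$. Adding the squares recovers $\norm{v'}_2\le L\norm{u''}_2$.

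What each route buys: the paper's decomposition uses no pointwise or intermediate-value argument, so it works verbatim for complex- or vector-valued $u$. Yours gives a better second constant.

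You should discard the tent-function route rather than keep it as an alternative. On affine $u$ it gives only the factor $4/L$, whereas the stated $\sqrt{12}/L$ is sharp there (take $u=x-L/2$), so it cannot yield the lemma. Your closing remark that the qualitative conclusion does not depend on sharp constants is also unnecessary, since the least-squares argument delivers the stated constants.
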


\begin{proof}
Put
\[
 w(x)=\int_0^x(x-s)u''(s)\dd s,
 \qquad p(x)=u(x)-w(x).
\]
Then \(p\) is affine. Young's inequality on the interval, after extension by zero of \(u''\), gives
\[
 \norm w_2\le\frac{L^2}{2}\norm{u''}_2,
 \qquad \norm{w'}_2\le L\norm{u''}_2.
\]
Write \(p(x)=a+b(x-L/2)\). Since
\[
 \norm p_2^2=L|a|^2+\frac{L^3}{12}|b|^2,
 \qquad \norm{p'}_2=\sqrt L\,|b|,
\]
we obtain \(\norm{p'}_2\le\sqrt{12}L^{-1}\norm p_2\). Adding the estimates for \(p'\) and \(w'\) proves \eqref{eq:interval-interpolation}.
\end{proof}

\begin{lemma}\label{lem:ode-unique}
Let \(I\) be a bounded open interval and \(q\in L^\infty(I)\) be real-valued. If \(u\in H^2(I)\) satisfies \(u''=qu\) and vanishes on a set of positive measure, then \(u\equiv0\) on \(I\).
\end{lemma}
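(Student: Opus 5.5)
The plan is to use a density-point argument to get a point where \(u\) and \(u'\) both vanish, and then close with Gronwall uniqueness for the Cauchy problem of the linear ODE.

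\emph{Regularity.} Since \(u\in H^2(I)\) on a bounded interval, \(u\) and \(u'\) have absolutely continuous representatives on \(\overline I\). The equation \(u''=qu\) then holds almost everywhere, with \(qu\in L^\infty(I)\). We may therefore work with the first-order system for \(U=(u,u')\):
\[
 U'=\begin{pmatrix}0&1\\ q&0\end{pmatrix}U,
\]
which has an \(L^\infty\) coefficient matrix.

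\emph{Finding a point with \(u(x_0)=u'(x_0)=0\).} Let \(Z=\{x\in I:u(x)=0\}\); by hypothesis \(|Z|>0\). By the Lebesgue density theorem, almost every point of \(Z\) is a density point, and in particular is not isolated in \(Z\). Take \(x_0\in Z\) that is a limit of a sequence \(x_k\in Z\setminus\{x_0\}\). Then
\[
 u'(x_0)=\lim_{k\to\infty}\frac{u(x_k)-u(x_0)}{x_k-x_0}=0,
\]
because \(u\) is differentiable at every point, being \(C^1\). Hence \(U(x_0)=0\). This step is the only place where positive measure is used; in fact an accumulation point of the zero set would suffice.

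\emph{Uniqueness.} Put \(\phi(x)=|u(x)|^2+|u'(x)|^2\). It is absolutely continuous, and almost everywhere
\[
 |\phi'|=\bigl|2\operatorname{Re}(u'\overline u)+2\operatorname{Re}(qu\overline{u'})\bigr|\le(1+\norm q_\infty)\phi .
\]
Gronwall's inequality, applied in both directions from \(x_0\) with \(\phi(x_0)=0\), gives \(\phi\equiv0\) on \(I\), so \(u\equiv0\).

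No step is a real obstacle. The only point needing care is to use the \(C^1\) representative, so that the derivative at \(x_0\) is an actual pointwise limit and the Gronwall argument applies to an absolutely continuous function.
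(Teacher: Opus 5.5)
Your proof is correct and follows essentially the same route as the paper: an accumulation point \(x_0\) of the zero set gives zero Cauchy data \(u(x_0)=u'(x_0)=0\) via difference quotients, and Gr\"onwall's inequality for the first-order system on each side of \(x_0\) forces \(u\equiv0\). The only cosmetic difference is that you apply Gr\"onwall to the energy \(|u|^2+|u'|^2\), whereas the paper applies it to the integral equation for \((u,u')\).
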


\begin{proof}
The function \(u\) has a \(C^1\) representative. Its zero set has an accumulation point \(x_0\) in the interior of \(I\). Continuity and difference quotients along distinct zeroes converging to \(x_0\) give \(u(x_0)=u'(x_0)=0\). The vector \(Y=(u,u')\) is absolutely continuous and satisfies
\[
 Y'(x)=\begin{pmatrix}0&1\\q(x)&0\end{pmatrix}Y(x)
 \quad\text{for almost every }x\in I.
\]
The integral equation, the boundedness of \(q\), and Gr\"onwall's inequality imply \(Y=0\) to the right and to the left of \(x_0\). This argument is valid for complex-valued \(u\) as well.
\end{proof}

\subsection{Local elliptic regularity and time Fourier transforms}
\begin{lemma}\label{lem:local-elliptic}
Let \(J\Subset J_*\) be bounded open intervals. If \(w\in L^2(J_*)\) and \(w''\in L^2(J_*)\) in distributions, then \(w\in H^2(J)\) and
\begin{equation}
 \norm w_{H^2(J)}\le C_{J,J_*}
       \bigl(\norm w_{L^2(J_*)}+\norm{w''}_{L^2(J_*)}\bigr).
 \label{eq:local-elliptic}
\end{equation}
The same estimate can be integrated in an auxiliary variable.
\end{lemma}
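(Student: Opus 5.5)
The plan is to reduce \eqref{eq:local-elliptic} to \cref{lem:interpolation} through a cutoff argument, after first showing that \(w\) lies in \(H^2_{\loc}(J_*)\). Since \(w''\in L^2(J_*)\), we fix a point \(x_0\in J_*\) and set
\[
 v(x)=\int_{x_0}^x(x-s)w''(s)\dd s .
\]
Then \(v\in H^2\) on every bounded subinterval of \(J_*\), and \(v''=w''\) there. The difference \(w-v\) has vanishing second distributional derivative, so it is affine by the standard fact that a distribution with zero second derivative is a polynomial of degree at most one. Hence \(w\in H^2(J')\) for every \(J'\Subset J_*\), and in fact for \(J'=J_*\) itself because \(J_*\) is bounded. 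This gives membership; the remaining task is the quantitative bound.

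For the bound itself we can work directly on \(J_*\). Write \(J_*=(a,b)\) and apply \cref{lem:interpolation} after translating \((a,b)\) to \((0,b-a)\) with \(L=b-a\). This yields
\[
 \norm{w'}_{L^2(J_*)}\le C_L\bigl(\norm w_{L^2(J_*)}+\norm{w''}_{L^2(J_*)}\bigr).
\]
Consequently \(\norm w_{H^2(J)}\le\norm w_{H^2(J_*)}\) is controlled by the right-hand side of \eqref{eq:local-elliptic}, with a constant depending only on \(|J_*|\).

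The only point needing care is that \cref{lem:interpolation} is stated for \(u\in H^2(0,L)\), so the first step above, which supplies that membership, must come first. With this approach the inclusion \(J\Subset J_*\) is not actually needed; it is harmless and merely matches later usage.

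For the final sentence of the statement, concerning integration in an auxiliary variable, suppose \(w=w(t,x)\) with \(w(t,\cdot)\) satisfying the hypotheses for almost every \(t\). The constant \(C_{J,J_*}\) does not depend on \(t\). Squaring \eqref{eq:local-elliptic} and integrating in \(t\) therefore gives the estimate in \(L^2_tH^2_x\). Measurability in \(t\) follows by realizing the \(H^2(J)\) norm through difference quotients, or by noting that the representation \(w=v+p\) depends measurably on \(t\). I expect this measurability bookkeeping to be the main technical obstacle, although it is routine.
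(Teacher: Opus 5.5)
Your proposal is correct and is essentially the paper's argument. The paper also writes \(w\) as the twice-integrated \(w''\) plus an affine function, and controls the affine part by its \(L^2\) norm. That is exactly the step \cref{lem:interpolation} packages with explicit constants, so applying that lemma on all of \(J_*\) is only a cosmetic difference, and your remark that \(J\Subset J_*\) is not needed in one dimension is also right.
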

\begin{proof}
On a bounded interval \(J_1\) with \(J\Subset J_1\Subset J_*\), choose a point \(x_1\) and set
\[
 z(x)=\int_{x_1}^x(x-y)w''(y)\dd y.
\]
Then \(z\in H^2(J_1)\), and its \(H^2\) norm is bounded by a constant times \(\norm{w''}_{L^2(J_*)}\). The distribution \(u:=w-z\) has second derivative zero on \(J_1\), so it is affine. The coefficients of an affine function on a fixed interval are controlled by its \(L^2\) norm. Since
\begin{equation*}
||u||_{H^2(J_1)}\leq C||u||_{L^2(J_1)}=||w-z||_{L^2(J_1)}\leq ||w||_{L^2(J_1)}+||z||_{L^2(J_1)}.
\end{equation*}
Combining
\begin{equation*}
||w||_{H^2(J_1)}\leq ||u+z||_{H^2(J_1)}\leq ||u||_{H^2(J_1)}+||z||_{H^2(J_1)}
\end{equation*}
then we prove \eqref{eq:local-elliptic}.
\end{proof}

\begin{lemma}\label{lem:graph-fourier}
Let \(f\in D(H)\), \(u(t)=\e^{-itH}f\), and \(\chi\in C_c^\infty(\R)\). Put \(v(t)=\chi(t)u(t)\). Then \(v\in L^2(\R;D(H))\), where \(D(H)\) carries its graph norm, and
\begin{equation}
 \widehat v(\tau)\in D(H)\ \text{for a.e. }\tau,
 \qquad (H+\tau)\widehat v(\tau)=-i\widehat{\chi'u}(\tau).
 \label{eq:graph-fourier}
\end{equation}
Moreover,
\begin{equation}
 \int_\R\norm{(H+\tau)\widehat v(\tau)}_2^2\dd\tau
 =\norm{\chi'}_{L^2(\R)}^2\norm f_2^2.
 \label{eq:graph-fourier-norm}
\end{equation}
\end{lemma}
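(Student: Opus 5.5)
The plan is to work with Bochner integrals in the Hilbert space \(D(H)\), equipped with the graph norm. Because \(f\in D(H)\) and the group preserves \(D(H)\), the map \(t\mapsto u(t)\) is continuous into \(D(H)\). Its graph norm is constant in time, since \(\norm{u(t)}_2=\norm f_2\) and \(\norm{Hu(t)}_2=\norm{Hf}_2\). As \(\chi\) has compact support, \(v=\chi u\) belongs to \(L^1\cap L^2(\R;D(H))\). This gives the first claim.

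For the Fourier transform, \(\widehat v(\tau)=(2\pi)^{-1/2}\int\e^{-it\tau}\chi(t)u(t)\dd t\) is a Bochner integral in \(D(H)\), so \(\widehat v(\tau)\in D(H)\) for every \(\tau\). Since \(H\) is bounded from \(D(H)\) to \(L^2\), it commutes with the integral, and \(H\widehat v(\tau)=\widehat{Hv}(\tau)\).

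The identity in \eqref{eq:graph-fourier} comes from differentiating in time. By the \(C^1\) property, \(i\partial_t v=i\chi'u+\chi Hu\) holds in \(L^2\), with \(t\mapsto v(t)\) being \(C^1_c\) into \(L^2\). Integrating by parts in \(t\) (there are no boundary terms because of the compact support) and using the stated convention \(\mathcal F_t(i\partial_t w)=-\tau\widehat w\), we get
\[
 -\tau\widehat v(\tau)=i\widehat{\chi'u}(\tau)+H\widehat v(\tau).
\]
Rearranging gives \((H+\tau)\widehat v(\tau)=-i\widehat{\chi'u}(\tau)\) for every \(\tau\), hence certainly almost everywhere.

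For \eqref{eq:graph-fourier-norm}, apply Plancherel's theorem for \(L^2(\R;L^2(\R))\)-valued functions. This is valid for Hilbert-space-valued \(L^2\) functions, for instance by expanding in an orthonormal basis, or by Fubini in \((t,x)\). It gives
\[
 \int\norm{\widehat{\chi'u}(\tau)}_2^2\dd\tau=\int|\chi'(t)|^2\norm{u(t)}_2^2\dd t=\norm{\chi'}_{L^2}^2\norm f_2^2,
\]
where the last step uses unitarity of the group.

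I expect the main obstacle to be only the justification of interchanging \(H\) with the time integral and the vector-valued integration by parts. Both are handled by working in the graph-norm space, where \(H\) is a bounded operator.
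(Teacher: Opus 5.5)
Your proof is correct and follows the paper's route. You get graph-norm integrability from the invariance of \(D(H)\), commute \(H\) with the time Fourier transform using the graph-norm structure, derive \((i\partial_t-H)v=i\chi'u\) with the stated sign convention, and conclude with Plancherel and unitarity. The only difference is cosmetic: you realize \(\widehat v(\tau)\) as an \(L^1\) Bochner integral in \(D(H)\), so the identity holds for every \(\tau\), whereas the paper argues that the \(L^2\) Fourier transform preserves the closed graph of \(H\).
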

\begin{proof}
The group preserves \(D(H)\) and \(\norm{Hu(t)}_2=\norm{Hf}_2\), so the graph-norm integrability is immediate. The graph of \(H\) is a closed subspace of \(L^2(\R)\times L^2(\R)\). The Hilbert-valued Fourier transform preserves this closed subspace, as may be seen first for simple graph-valued functions and then by completion. Hence it commutes with \(H\) in this sense. The identity \((i\partial_t-H)v=i\chi'u\), together with our sign convention in \eqref{eq:fourier}, gives \eqref{eq:graph-fourier}. Plancherel's theorem and unitarity give \eqref{eq:graph-fourier-norm}.
\end{proof}

For \(\psi\in C_c^\infty(\R)\), we write \(\psi(D_t)\) for the bounded operator with time-Fourier multiplier \(\psi\). Its convolution kernel \(k_\psi\), with the normalization corresponding to \eqref{eq:fourier}, is a Schwartz function. In particular,
\begin{equation}
 \norm{\psi(D_t)v}_{L^2(\R;X)}\le\norm\psi_\infty\norm v_{L^2(\R;X)},
 \quad
 \norm{\partial_t\psi(D_t)v}_{L^2(\R;X)}
 \le\norm{\tau\psi(\tau)}_\infty\norm v_{L^2(\R;X)}
 \label{eq:time-multiplier}
\end{equation}
for any Hilbert space \(X\). Multiplication by a time-independent bounded spatial function commutes with \(\psi(D_t)\).

\subsection{Two facts about the temporal Fourier multiplier}
We record the local compactness statement used later. It concerns functions after application of a temporal Fourier multiplier; it does not assert spatial regularization for the Schr\"odinger group at a single time.

\begin{lemma}\label{lem:temporal-compactness}
Let \(T>0\), let \(\rho:\R\to(0,\infty)\) be measurable, and assume that both \(\rho\) and \(\rho^{-1}\) are bounded on bounded intervals. Suppose that
\[
 \sup_j\int_0^T\int_\R\rho(y)|U_j(t,y)|^2\dd y\dd t<\infty
\]
and that
\begin{equation}
 (i\partial_t+\partial_y^2-V_j(y))U_j=0
 \quad\text{in }\mathcal D'((0,T)\times\R),
 \label{eq:abstract-local-evolution}
\end{equation}
where the \(V_j\) are independent of time and satisfy
\(\sup_j\norm{V_j}_{L^\infty(J)}<\infty\) for every bounded interval \(J\).
Fix \(\chi\in C_c^\infty(0,T)\) and \(\psi\in C_c^\infty(\R)\), and set
\[
 Z_j=\psi(D_t)(\chi U_j),\qquad F_j=\psi(D_t)(\chi' U_j),
\]
where the arguments of \(\psi(D_t)\) are extended by zero outside \((0,T)\). Then, for all bounded open intervals \(J\Subset J_*\),
\begin{equation}
 \sup_j\left(
 \norm{Z_j}_{L^2(\R_t;H^2(J))}
 +\norm{\partial_t Z_j}_{L^2(\R\times J)}
 +\norm{F_j}_{L^2(\R\times J)}\right)<\infty.
 \label{eq:abstract-local-compact-bounds}
\end{equation}
There is a subsequence converging strongly in \(L^2_{\loc}(\R^2)\). If, along that subsequence,
\[
 U_j\rightharpoonup U
 \quad\text{in }L^2((0,T)\times\R,\rho(y)\dd t\dd y),
\]
then its strong local limit is
\begin{equation}
 Z=\psi(D_t)(\chi U).
 \label{eq:abstract-identified-limit}
\end{equation}
For each bounded \(J\), one also has
\begin{equation}
 \lim_{S\to\infty}\sup_j
   \int_{|t|>S}\int_J|Z_j(t,y)|^2\dd y\dd t=0.
 \label{eq:abstract-time-tail}
\end{equation}
\end{lemma}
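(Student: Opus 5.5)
The plan is to work in the variables \((t,y)\in\R\times J\) and to derive an equation for \(Z_j\) directly. Since \(V_j\) is independent of time, multiplication by \(V_j\) commutes with \(\psi(D_t)\). Multiplying \eqref{eq:abstract-local-evolution} by \(\chi(t)\) gives
\[
 (i\partial_t+\partial_y^2-V_j)(\chi U_j)=i\chi'U_j
\]
in \(\mathcal D'(\R\times\R)\), after extension by zero; this is legitimate because \(\chi\) has compact support in \((0,T)\). Applying \(\psi(D_t)\) then yields
\[
 \partial_y^2Z_j=V_jZ_j-i\partial_tZ_j+iF_j .
\]
The weighted bound, together with the local boundedness of \(\rho^{-1}\), gives \(\sup_j\norm{U_j}_{L^2((0,T)\times J_*)}<\infty\). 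By \eqref{eq:time-multiplier}, with \(X=L^2(J_*)\), the quantities \(Z_j\), \(\partial_tZ_j\) and \(F_j\) are then bounded in \(L^2(\R\times J_*)\). The equation above and \(\sup_j\norm{V_j}_{L^\infty(J_*)}<\infty\) bound \(\partial_y^2Z_j\) in \(L^2(\R\times J_*)\). Lemma~\ref{lem:local-elliptic}, integrated in \(t\), then gives the \(L^2_tH^2(J)\) bound, which proves \eqref{eq:abstract-local-compact-bounds}.

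\textbf{Time tail.} The key point is that \(\chi U_j\) is supported in \([0,T]\) and \(Z_j=k_\psi*_t(\chi U_j)\) with \(k_\psi\) Schwartz. For \(|t|>S\ge 2T\) we have \(|t-s|\ge|t|/2\) for \(s\in[0,T]\). Cauchy--Schwarz in \(s\) gives
\[
 |Z_j(t,y)|^2\le C_N(1+|t|)^{-N}\int_0^T|U_j(s,y)|^2\dd s .
\]
Integrating over \(|t|>S\) and \(y\in J\) gives a bound \(C\,S^{1-N}\sup_j\norm{U_j}^2_{L^2((0,T)\times J)}\), which yields \eqref{eq:abstract-time-tail}.

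\textbf{Compactness.} On each bounded box \((-S,S)\times J\), the bounds give uniform \(H^1\) control: \(\partial_tZ_j\) and \(\partial_yZ_j\) are both in \(L^2\), since \(L^2_tH^2_y\) controls \(\partial_y\). Rellich's theorem therefore gives strong \(L^2\) precompactness on each box, and a diagonal argument over exhausting boxes gives a subsequence converging in \(L^2_{\loc}(\R^2)\). The tail estimate upgrades this to convergence in \(L^2(\R\times J)\) for every bounded \(J\), although only local convergence is claimed.

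\textbf{Identification of the limit.} Suppose \(U_j\rightharpoonup U\) weakly in the weighted space. Because \(\rho\) is bounded above and below on \(J\), this implies weak convergence in \(L^2((0,T)\times J)\), hence \(\chi U_j\rightharpoonup\chi U\) in \(L^2(\R\times J)\). The operator \(\psi(D_t)\) is bounded on \(L^2(\R\times J)\), so it is weakly continuous, and \(Z_j\rightharpoonup\psi(D_t)(\chi U)\) there. Weak and strong limits coincide, which gives \eqref{eq:abstract-identified-limit}.

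The main obstacle is the justification of the commutation step in the distributional equation: multiplication by \(V_j\) commutes with \(\psi(D_t)\) only after \(\chi U_j\) is placed in \(L^2(\R;L^2(J_*))\). I would test against \(\varphi(t)\phi(y)\) and use that \(\psi(D_t)\) has adjoint \(\overline\psi(D_t)\) on tensor test functions. This is also why a direct \(H^2\) bound on \(U_j\) itself is not available; the multiplier is what converts \(\partial_t\) into a bounded operator.
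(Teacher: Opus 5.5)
Your proposal is correct and follows essentially the same route as the paper: you derive the forced equation for \(Z_j\) after the time cutoff, bound \(Z_j,\partial_tZ_j,F_j\) by Plancherel and \(\partial_y^2Z_j\) through the equation together with Lemma~\ref{lem:local-elliptic}, obtain compactness from \(H^1\) bounds on rectangles, Rellich and a diagonal argument, and identify the limit by weak continuity of the bounded map \(U\mapsto\psi(D_t)(\chi U)\). The only difference is cosmetic and lies in the time-tail step. You use the pointwise Schwartz decay of \(k_\psi\) for \(|t|\ge2T\), whereas the paper bounds the tail by \(C_JT\int_{|r|>S-T}|k_\psi(r)|^2\dd r\).
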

\begin{proof}
\step{Step 1. Equation and local bounds.}
For a bounded interval \(J_*\), write
\(c_{J_*}=\operatorname*{ess\,inf}_{J_*}\rho>0\). The assumed weighted bound implies
\[
 \sup_j\norm{U_j}_{L^2((0,T)\times J_*)}^2
 \le c_{J_*}^{-1}\sup_j\norm{U_j}_{L^2(\rho\dd t\dd y)}^2<\infty.
\]
Multiplying \eqref{eq:abstract-local-evolution} by \(\chi\) gives, after extension by zero,
\[
 (i\partial_t+\partial_y^2-V_j)(\chi U_j)=i\chi' U_j
 \quad\text{in }\mathcal D'(\R^2).
\]
There is no endpoint distribution, since \(\chi\) is zero in a neighborhood of both endpoints. The coefficient \(V_j\) is independent of \(t\). Hence time convolution commutes with multiplication by \(V_j\) and with spatial distributional derivatives. We obtain
\begin{equation}
 i\partial_t Z_j=-\partial_y^2 Z_j+V_j Z_j+iF_j
 \quad\text{in }\mathcal D'(\R^2).
 \label{eq:abstract-filtered-equation}
\end{equation}
This identity can alternatively be tested after time Fourier transformation on each bounded spatial interval. Its input and right-hand side have compact time support before the multiplier is applied.

Plancherel's theorem, with values in \(L^2(J_*)\), yields
\begin{align*}
 \norm{Z_j}_{L^2(\R\times J_*)}
 &\le\norm\psi_\infty\norm\chi_\infty
        \norm{U_j}_{L^2((0,T)\times J_*)},\\
 \norm{\partial_tZ_j}_{L^2(\R\times J_*)}
 &\le\norm{\tau\psi(\tau)}_\infty\norm\chi_\infty
        \norm{U_j}_{L^2((0,T)\times J_*)},\\
 \norm{F_j}_{L^2(\R\times J_*)}
 &\le\norm\psi_\infty\norm{\chi'}_\infty
        \norm{U_j}_{L^2((0,T)\times J_*)}.
\end{align*}
Equation \eqref{eq:abstract-filtered-equation} consequently bounds \(\partial_y^2Z_j\) in \(L^2(\R\times J_*)\). By testing first against products of scalar test functions, the equality of the spatial second derivative holds on almost every time slice. Applying \cref{lem:local-elliptic} on \(J\Subset J_*\), squaring, and integrating in time proves \eqref{eq:abstract-local-compact-bounds}.

\step{Step 2. Strong compactness and identification.}
On each bounded rectangle \((-S,S)\times J\), the estimates just obtained control the function and its first derivatives in both variables. The sequence is therefore bounded in \(H^1((-S,S)\times J)\). Rellich compactness gives a strongly convergent subsequence in \(L^2\) on that rectangle. A diagonal extraction over \(S\in\N\) and \(J=(-N,N)\) gives local strong convergence on \(\R^2\).

For every fixed bounded \(J\), the map
\[
 \mathcal A_J:U\longmapsto\left.\psi(D_t)(\chi U)\right|_{\R_t\times J}
\]
is bounded and linear from the weighted Hilbert space in the statement into \(L^2(\R\times J)\); this follows from the first estimate in Step~1. It preserves weak convergence. Thus \(Z_j\rightharpoonup\mathcal A_JU\) in that space. Its local strong limit must agree with this weak limit on every bounded time interval. Exhausting \(\R_t\) and then the spatial line proves \eqref{eq:abstract-identified-limit}.

\step{Step 3. Time tails.}
With the unitary Fourier convention of \eqref{eq:fourier}, define
\[
 k_\psi(t)=\frac1{2\pi}\int_\R\e^{it\tau}\psi(\tau)\dd\tau.
\]
Then \(k_\psi\in\mathcal S(\R)\) and \(\psi(D_t)g=k_\psi*g\). For almost every \(y\),
\[
 |Z_j(t,y)|^2\le
 \left(\int_0^T|k_\psi(t-s)|^2\dd s\right)
 \left(\int_0^T|\chi(s)U_j(s,y)|^2\dd s\right).
\]
Integration over \(J\) and the local bound on \(U_j\) give
\[
 \int_J|Z_j(t,y)|^2\dd y
 \le C_J\int_0^T|k_\psi(t-s)|^2\dd s.
\]
For \(S>T\), a change of variables shows
\begin{equation}
 \int_{|t|>S}\int_J|Z_j|^2\dd y\dd t
 \le C_JT\int_{|r|>S-T}|k_\psi(r)|^2\dd r.
 \label{eq:explicit-temporal-tail}
\end{equation}
The right-hand side tends to zero independently of \(j\), proving \eqref{eq:abstract-time-tail}.
\end{proof}

\begin{lemma}\label{lem:compact-time-injectivity}
Let \(g\in L^2(\R)\) have compact support, and let \(\psi\) be a continuous bounded function that is nonzero on a nonempty open interval. If \(\psi(D_t)g=0\), then \(g=0\).
\end{lemma}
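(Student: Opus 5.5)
The proof will pass to the Fourier side and use analyticity. Since \(g\in L^2(\R)\) has compact support, say \(\supp g\subset[-S,S]\), the Paley--Wiener theorem shows that
\[
 \widehat g(\tau)=(2\pi)^{-1/2}\int_{-S}^{S}\e^{-it\tau}g(t)\dd t
\]
extends to an entire function of \(\tau\in\C\). Indeed, \(g\in L^1\) because its support is compact, so we may differentiate under the integral sign, and the extension is bounded by \(\e^{S|\operatorname{Im}\tau|}\norm g_{L^1}\).

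Next, we translate the hypothesis into a statement about \(\widehat g\). By definition of the multiplier and Plancherel's theorem,
\[
 \mathcal F_t(\psi(D_t)g)=\psi\,\widehat g\quad\text{in }L^2(\R).
\]
Hence \(\psi(D_t)g=0\) gives \(\psi(\tau)\widehat g(\tau)=0\) for almost every real \(\tau\). Let \(I\) be an open interval on which \(\psi\) is nonzero. Then \(\widehat g=0\) almost everywhere on \(I\). Since \(\widehat g\) is continuous on \(\R\), it vanishes identically on \(I\).

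Finally, the zero set of the entire function \(\widehat g\) contains the interval \(I\), which has accumulation points. By the identity theorem, \(\widehat g\equiv0\) on \(\C\), and in particular on \(\R\). Injectivity of the Fourier transform on \(L^2\) then gives \(g=0\).

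There is no real obstacle here. The only points needing care are two small ones. First, the multiplier identity must be read as an \(L^2\) identity holding almost everywhere, which is why continuity of \(\widehat g\) is used to upgrade vanishing almost everywhere on \(I\) to vanishing everywhere on \(I\). Second, the compact support of \(g\) is what makes \(\widehat g\) entire, not merely \(L^2\).

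The same argument applies verbatim to Hilbert-space-valued \(g\) by pairing with a fixed vector. This is presumably how the lemma will be applied spatially pointwise later, as described in the strategy section.
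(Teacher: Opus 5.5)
Your proposal is correct and is essentially the paper's proof. You use the same steps: the compactly supported function is in \(L^1\), so its Fourier transform extends to an entire function; continuity upgrades \(\psi\widehat g=0\) almost everywhere to \(\widehat g\equiv0\) on an interval where \(\psi\neq0\); and the identity theorem together with injectivity of the \(L^2\) Fourier transform gives \(g=0\).
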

\begin{proof}
Choose \([a,b]\) containing the support of \(g\). By Cauchy--Schwarz, \(g\in L^1\). For \(z\in\C\), define
\[
 G(z)=(2\pi)^{-1/2}\int_a^b\e^{-itz}g(t)\dd t.
\]
For every compact \(K\subset\C\) and integer \(k\ge0\), the functions
\(|t|^k\e^{t\operatorname{Im}z}|g(t)|\), with \(z\in K\), have a common integrable bound. Differentiation under the integral is therefore legitimate at every order, and \(G\) is entire. On the real axis it is the continuous representative of the Fourier transform of \(g\). The hypothesis gives \(\psi(\tau)G(\tau)=0\) almost everywhere. On an open interval where \(\psi\ne0\), continuity implies that \(G\) vanishes everywhere. The identity theorem yields \(G\equiv0\), and injectivity of the \(L^2\) Fourier transform gives \(g=0\).
\end{proof}

\section{A uniform local estimate}\label{sec:local}
We now prove the local estimate used in the global argument. No endpoint condition will be imposed on the functions in this section.

\begin{proposition}\label{prop:local}
Fix \(L>0\), \(0<\gamma\le1\), and \(A_0,M\ge0\). There exists
\(C_{\mathrm{loc}}=C_{\mathrm{loc}}(L,\gamma,A_0,M)\) such that, whenever \(F\subset(0,L)\) is measurable with \(|F|\ge\gamma L\), \(a\in\R\) satisfies \(a\le A_0\), and \(b\in L^\infty(0,L)\) is real-valued with \(\norm b_\infty\le M\), one has
\begin{equation}
 \norm u_{L^2(0,L)}^2
 \le C_{\mathrm{loc}}\left(
     \norm u_{L^2(F)}^2+
     \norm{-u''+(a+b)u}_{L^2(0,L)}^2\right),
 \qquad u\in H^2(0,L).
 \label{eq:local-main}
\end{equation}
The constant is independent of \(a\), \(b\), \(F\) and \(u\) within these classes.
\end{proposition}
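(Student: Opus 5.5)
Following the strategy announced in the introduction, I split according to the size of the constant part \(a\). Fix a large threshold \(A_1=A_1(L,\gamma,M)\ge 0\). I treat separately the bounded regime \(-A_1\le a\le A_0\) and the oscillatory regime \(a<-A_1\).

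\emph{Bounded regime.} Here I argue by contradiction and compactness. Suppose there are \(u_j\in H^2(0,L)\), \(F_j\), \(a_j\in[-A_1,A_0]\) and \(b_j\) with \(\norm b_\infty\le M\). Normalize so that \(\norm{u_j}_{L^2(0,L)}=1\), and assume \(\norm{u_j}_{L^2(F_j)}+\norm{-u_j''+(a_j+b_j)u_j}_2\to0\).

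The coefficients \(q_j=a_j+b_j\) are bounded in \(L^\infty\), so \(u_j''\) is bounded in \(L^2\). By \cref{lem:interpolation}, \(u_j\) is bounded in \(H^2(0,L)\). After passing to a subsequence:
\begin{itemize}
 \item \(u_j\to u\) in \(C^1([0,L])\), hence strongly in \(L^2\), so \(\norm u_2=1\);
 \item \(q_j\rightharpoonup q\) weak-star in \(L^\infty\), with \(\norm q_\infty\le A_1+A_0+M\);
 \item \(\ind_{F_j}\rightharpoonup\theta\) weak-star, with \(0\le\theta\le1\) and \(\int\theta\ge\gamma L\).
\end{itemize}
Since \(q_ju_j\rightharpoonup qu\) (weak times strong), we get \(u''=qu\). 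Also \(\int\theta|u|^2=\lim\int_{F_j}|u_j|^2=0\). Hence \(u=0\) on \(\{\theta>0\}\), a set of positive measure, and \(u\equiv0\) by \cref{lem:ode-unique}. This contradicts \(\norm u_2=1\).

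The point needing care is that \(a\) is real. \cref{lem:ode-unique} needs only \(q\in L^\infty\) real, which the weak-star limit preserves.

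\emph{Oscillatory regime.} Write \(a=-k^2\) with \(k\ge\sqrt{A_1}\) large, and set \(g=-u''+(a+b)u\). Then
\[
u''+k^2u=bu-g.
\]
Variation of constants from \(x=0\) gives
\[
u(x)=\alpha\cos kx+\beta\sin kx+\frac1k\int_0^x\sin k(x-s)\bigl(b u-g\bigr)(s)\dd s.
\]
The integral term \(r\) satisfies \(\norm r_2\le CL k^{-1}\bigl(M\norm u_2+\norm g_2\bigr)\). For \(k\) large this gives
\[
\norm u_2\le 2\norm{\alpha\cos kx+\beta\sin kx}_2+Ck^{-1}\norm g_2,
\]
and likewise on \(F\).

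It therefore suffices to prove a uniform lower bound
\[
\int_F|\alpha\cos kx+\beta\sin kx|^2\dd x\ge c(L,\gamma)\bigl(|\alpha|^2+|\beta|^2\bigr)L\qquad\text{for }k\ge k_0(L,\gamma).
\]
To prove it, expand the integrand \(|\alpha\cos kx+\beta\sin kx|^2\) as
\[
\tfrac12\bigl(|\alpha|^2+|\beta|^2\bigr)+\operatorname{Re}\bigl(\text{terms in }\cos2kx,\ \sin2kx\bigr).
\]
The oscillating parts integrate over \(F\) to something small, but \emph{not} uniformly in \(F\), because Riemann--Lebesgue fails uniformly over measurable sets. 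So instead I bound the modulus from below pointwise. Write the combination as \(A\cos(kx)+B\sin(kx)\), with complex \(A,B\) and \(|A|^2+|B|^2=|\alpha|^2+|\beta|^2\). Its square modulus is a nonnegative trigonometric polynomial of degree \(2\) in \(kx\), with mean \(\tfrac12(|A|^2+|B|^2)\) and period \(\pi/k\). For such a polynomial, the set where it is \(<\delta(|A|^2+|B|^2)\) has measure at most \(C\sqrt\delta\) per period; this uses that the two real components cannot vanish simultaneously to high order. Summing over the roughly \(kL/\pi\) periods contained in \((0,L)\), plus one boundary period, the small set has measure at most \(C\sqrt\delta L+\pi/k\). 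Choosing \(\delta\) and \(k_0\) so that this is \(\le\gamma L/2\), at least half of \(F\) lies where the integrand is \(\ge\delta(|\alpha|^2+|\beta|^2)\), which gives the bound. The matching upper bound \(\norm{\alpha\cos+\beta\sin}_2^2\le L(|\alpha|^2+|\beta|^2)\) closes the estimate.

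\emph{Assembly and main obstacle.} Choose \(A_1=k_0^2\), combined with the largeness needed to absorb \(Mk^{-1}\). The bounded regime then covers \([-A_1,A_0]\), and the constant depends only on \(L,\gamma,A_0,M\). I expect the main obstacle to be the uniform sublevel-set estimate for \(|A\cos\theta+B\sin\theta|^2\) with complex coefficients. My first attempt would be to write it as \(P+Q\cos2\theta+R\sin2\theta\) with \(\sqrt{Q^2+R^2}\le P\). I would then use that a nonzero function of this form vanishes only at isolated points, with at most quadratic contact, which gives the \(\sqrt\delta\) measure bound uniformly after normalizing \(P=1\) by a compactness argument on the coefficients \((Q,R)\).
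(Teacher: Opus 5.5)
Your proposal is correct and follows the paper's proof step for step. The compactness argument for a bounded range of $a$, using weak-star limits of $q_j$ and $\ind_{F_j}$ and then \cref{lem:ode-unique}, is \cref{lem:bounded-local}; the variation-of-constants formula with absorption of the $M/k$ term for very negative $a$ is \cref{lem:large-negative}.

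The one difference is the trigonometric lower bound (\cref{lem:trig}), and there the obstacle you anticipate does not arise. The paper proves the real case via $\sqrt{A^2+B^2}\cos(kx-\theta)$ and then adds the bounds for the real and imaginary parts. Equivalently, your form $P+Q\cos 2s+R\sin 2s$ with $\sqrt{Q^2+R^2}\le P$ is pointwise at least $P\cos^2(s-s_0)$ for a suitable phase $s_0$, so no compactness in $(Q,R)$ is needed.
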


The absence of a lower bound for \(a\) is essential. A compactness argument alone treats only a bounded range of coefficients. We first prove that part, and then treat large negative values of \(a\) by an oscillatory calculation.

\subsection{Uniformly bounded coefficients}
\begin{lemma}\label{lem:bounded-local}
Fix \(L>0\), \(0<\gamma\le1\) and \(Q\ge0\). There exists \(C_{\mathrm b}=C_{\mathrm b}(L,\gamma,Q)\) such that
\begin{equation}
 \norm u_{L^2(0,L)}^2
 \le C_{\mathrm b}\left(
    \norm u_{L^2(F)}^2+\norm{-u''+qu}_{L^2(0,L)}^2\right)
 \label{eq:bounded-local}
\end{equation}
for every measurable \(F\subset(0,L)\) with \(|F|\ge\gamma L\), every real-valued \(q\in L^\infty(0,L)\) with \(\norm q_\infty\le Q\), and every \(u\in H^2(0,L)\).
\end{lemma}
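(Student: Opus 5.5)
We argue by contradiction, using compactness. Suppose \eqref{eq:bounded-local} fails for every constant. Then there are sequences of measurable sets \(F_j\subset(0,L)\) with \(|F_j|\ge\gamma L\), real potentials \(q_j\) with \(\norm{q_j}_\infty\le Q\), and functions \(u_j\in H^2(0,L)\) normalized by \(\norm{u_j}_{L^2(0,L)}=1\), such that
\[
 \norm{u_j}_{L^2(F_j)}^2+\norm{-u_j''+q_ju_j}_{L^2(0,L)}^2\longrightarrow0 .
\]
Put \(g_j=-u_j''+q_ju_j\). Then \(\norm{u_j''}_2\le Q+\norm{g_j}_2\) is bounded. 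By \cref{lem:interpolation}, \((u_j)\) is bounded in \(H^2(0,L)\). Hence, along a subsequence, \(u_j\to u\) strongly in \(H^1(0,L)\) and uniformly on \([0,L]\), and \(u_j''\rightharpoonup u''\) weakly in \(L^2\). Banach--Alaoglu in \(L^\infty=(L^1)^*\) gives, after further subsequences, \(q_j\overset{*}{\rightharpoonup}q\) with \(\norm q_\infty\le Q\) and \(q\) real, and \(\ind_{F_j}\overset{*}{\rightharpoonup}\theta\) with \(0\le\theta\le1\) and \(\int_0^L\theta\ge\gamma L\).

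Next we pass to the limit in each piece. Since \(u_j\to u\) strongly in \(L^2\) and \(q_j\overset{*}{\rightharpoonup}q\), the products converge weakly: \(q_ju_j\rightharpoonup qu\) in \(L^2\). To see this, test against \(\phi\in L^2\), split \(q_ju_j=q_j(u_j-u)+q_ju\), and note that \(u\bar\phi\in L^1\). Combining this with \(g_j\to0\) gives \(-u''+qu=0\) in \(L^2(0,L)\). For the observation term, uniform convergence gives \(\int\ind_{F_j}|u_j|^2-\int\ind_{F_j}|u|^2\to0\). Weak-star convergence then gives \(\int\ind_{F_j}|u|^2\to\int\theta|u|^2\), because \(|u|^2\in L^1\). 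Hence \(\int_0^L\theta|u|^2=0\). The set \(\{\theta>0\}\) has positive measure, since \(\int\theta\ge\gamma L>0\) and \(\theta\le 1\) force \(|\{\theta>0\}|\ge\gamma L\). Therefore \(u\) vanishes on a set of positive measure. Finally, \(\norm u_2=\lim\norm{u_j}_2=1\).

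Now \(u\in H^2(0,L)\) solves \(u''=qu\) with \(q\in L^\infty\) real and vanishes on a set of positive measure. \Cref{lem:ode-unique} gives \(u\equiv0\), which contradicts \(\norm u_2=1\).

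The main point of care is that both \(F\) and \(q\) vary in the class. We therefore cannot pass to a limit set, only to a limit density \(\theta\). The weak-star limit of indicator functions is not an indicator, but the contradiction only needs \(|\{\theta>0\}|>0\), which the mass bound provides. The product \(q_ju_j\) is handled by the weak-strong pairing above. No boundary conditions are needed, because \cref{lem:interpolation} supplies the \(H^2\) bound on the interval directly.
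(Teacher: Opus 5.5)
Your proof is correct and follows the paper's argument essentially step for step: normalization by homogeneity, the $H^2$ bound via \cref{lem:interpolation}, weak-star limits of $q_j$ and $\ind_{F_j}$, passage to $-u''+qu=0$ with $\int_0^L\theta|u|^2=0$, and the contradiction from \cref{lem:ode-unique}. The only differences are cosmetic: the paper extracts $C^1([0,L])$ convergence and passes to the limit in the sense of distributions, whereas you use strong $H^1$/uniform convergence with a weak--strong pairing in $L^2$, and both work equally well.
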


\begin{proof}
Suppose the assertion is false. By homogeneity, there are \(q_j,F_j,u_j\) satisfying the stated bounds and
\begin{equation}
 \norm{u_j}_{L^2(0,L)}=1,
 \qquad \norm{u_j}_{L^2(F_j)}^2+
         \norm{f_j}_{L^2(0,L)}^2\longrightarrow0,
 \qquad f_j=-u_j''+q_ju_j.
 \label{eq:bounded-contradiction}
\end{equation}
The differential equation gives
\[
 \norm{u_j''}_2\le Q\norm{u_j}_2+\norm{f_j}_2\le Q+o(1).
\]
By \cref{lem:interpolation}, \((u_j)\) is bounded in \(H^2(0,L)\). In one dimension the embedding into \(C^1([0,L])\) is compact. This also follows directly from the bound
\[
 |u_j'(x)-u_j'(y)|\le\norm{u_j''}_2|x-y|^{1/2}
\]
and the uniform sup-norm bounds obtained from \(H^2\), followed by the Arzel\`a--Ascoli theorem for \(u_j\) and \(u_j'\).

Passing to a subsequence, we may therefore assume
\begin{equation}
 \begin{aligned}
  u_j&\longrightarrow u&&\text{in }C^1([0,L]),\\
  q_j&\rightharpoonup^*q&&\text{in }L^\infty(0,L),\\
  \ind_{F_j}&\rightharpoonup^*\zeta&&\text{in }L^\infty(0,L).
 \end{aligned}
 \label{eq:compact-limits}
\end{equation}
The last two subsequences exist because the relevant bounded subsets of \(L^\infty\) are weak-star sequentially compact when tested against the separable space \(L^1(0,L)\). The limits satisfy
\[
 \norm q_\infty\le Q,\qquad0\le\zeta\le1,
 \qquad \int_0^L\zeta\dd x\ge\gamma L.
\]

For every compactly supported smooth test function, uniform convergence of \(u_j\) and weak-star convergence of \(q_j\) imply \(q_ju_j\to qu\) in distributions. Hence
\begin{equation}
 -u''+qu=0,\qquad \norm u_{L^2(0,L)}=1.
 \label{eq:limit-ode}
\end{equation}
The limit belongs to \(H^2(0,L)\), either by weak compactness in \(H^2\) or by the differential equation.

The varying observation sets cause no loss in the argument. Indeed,
\[
 \left|\int_{F_j}|u_j|^2-\int_0^L\ind_{F_j}|u|^2\right|
 \le L\norm{|u_j|^2-|u|^2}_{L^\infty(0,L)}\longrightarrow0.
\]
Weak-star convergence of \(\ind_{F_j}\) and \eqref{eq:bounded-contradiction} give
\begin{equation}
 \int_0^L\zeta(x)|u(x)|^2\dd x=0.
 \label{eq:limit-observation}
\end{equation}
Thus \(\zeta\) vanishes almost everywhere outside the zero set of \(u\). Since \(\int\zeta\ge\gamma L\) and \(\zeta\le1\), this zero set has positive measure. \Cref{lem:ode-unique} contradicts the normalization in \eqref{eq:limit-ode}. This proves the lemma.
\end{proof}

\subsection{Sine and cosine combinations on measurable subsets}
The following elementary estimate is closely related to the trigonometric estimate in~\cite[Lemma~4.2]{SSY25}. We include the proof both to display a uniform constant and to cover complex coefficients explicitly.

\begin{lemma}\label{lem:trig}
Let \(k>0\) and \(kL\ge1\). For every measurable \(F\subset(0,L)\) with \(|F|\ge\gamma L\) and every \(A,B\in\C\),
\begin{equation}
 \int_F|A\cos(kx)+B\sin(kx)|^2\dd x
 \ge\frac{\gamma^3L}{800}(|A|^2+|B|^2).
 \label{eq:trig-lower}
\end{equation}
Consequently, for \(v(x)=A\cos(kx)+B\sin(kx)\),
\begin{equation}
 \norm v_{L^2(0,L)}\le D_\gamma\norm v_{L^2(F)},
 \qquad D_\gamma=\sqrt{800}\,\gamma^{-3/2}.
 \label{eq:trig-observation}
\end{equation}
\end{lemma}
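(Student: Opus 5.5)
Write \(v(x)=A\cos(kx)+B\sin(kx)\). We first reduce to a normalized phase form. Put \(A=a_1+ia_2\), \(B=b_1+ib_2\). Then \(|v|^2=|a_1\cos+b_1\sin|^2+|a_2\cos+b_2\sin|^2\), and each real combination equals \(r\cos(kx-\phi)\) with \(r^2=a_\ell^2+b_\ell^2\). It therefore suffices to prove, for every real phase \(\phi\),
\[
 \int_F\cos^2(kx-\phi)\dd x\ge\frac{\gamma^3L}{800}.
\]
Summing the two real pieces then gives \eqref{eq:trig-lower}, since \(|A|^2+|B|^2=r_1^2+r_2^2\).

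For the phase estimate, we bound from above the measure of the set where the cosine is small. Fix \(\delta\in(0,1)\) and let
\[
 S_\delta=\{x\in(0,L):|\cos(kx-\phi)|<\delta\}.
\]
In each period of length \(\pi/k\) in the variable \(x\), this set consists of one interval around a zero. Its length is \(2\arcsin(\delta)/k\le\pi\delta/k\). The interval \((0,L)\) meets at most \(kL/\pi+1\le kL(1/\pi+1)\le 2kL\) such periods, using \(kL\ge1\). Hence
\[
 |S_\delta|\le 2kL\cdot\pi\delta/k=2\pi\delta L.
\]
Choose \(\delta=\gamma/(4\pi)\). Then \(|S_\delta|\le\gamma L/2\), so \(|F\setminus S_\delta|\ge\gamma L/2\). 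On this set \(\cos^2\ge\delta^2\), and therefore
\[
 \int_F\cos^2(kx-\phi)\dd x\ge\frac{\gamma L}{2}\cdot\frac{\gamma^2}{16\pi^2}=\frac{\gamma^3L}{32\pi^2}.
\]
Since \(32\pi^2<800\), this gives \eqref{eq:trig-lower}. The only point needing care is the period count: the hypothesis \(kL\ge1\) is exactly what absorbs the additive \(+1\), and this is where it enters.

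For \eqref{eq:trig-observation}, we bound the full norm from above. By Cauchy--Schwarz,
\[
 |v|^2\le(|A|^2+|B|^2)(\cos^2+\sin^2)=|A|^2+|B|^2,
\]
so \(\norm v_{L^2(0,L)}^2\le L(|A|^2+|B|^2)\). Combining this with \eqref{eq:trig-lower} gives
\[
 \norm v_{L^2(0,L)}^2\le 800\gamma^{-3}\norm v_{L^2(F)}^2,
\]
which is the stated constant \(D_\gamma\).

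The argument has no serious obstacle. The steps needing care are the reduction of complex coefficients to two real phase-shifted cosines and the counting of periods.
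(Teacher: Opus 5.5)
Your proof is correct and follows the paper's argument step for step: you split into real and imaginary parts, write each as a phase-shifted cosine, bound the measure of the set where the cosine is small by counting periods, and finish with Cauchy--Schwarz; the only difference is sharper constants ($32\pi^2<800$). One small slip: with a fixed tiling into periods, $(0,L)$ can meet $\lceil kL/\pi\rceil+1$ cells, which may exceed $kL/\pi+1$ (the paper uses $kL/\pi+2$). Your bound $\le 2kL$ still holds for $kL\ge1$, and alternatively you can tile $(0,L)$ by consecutive windows of length $\pi/k$ starting at $0$, each of which carries measure exactly $2\arcsin(\delta)/k$ of the small-cosine set.
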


\begin{proof}
Fix \(\theta\in\R\) and \(0<\epsilon\le1/2\). On each interval of length \(\pi\) in the variable \(z\), the set where \(|\cos z|<\epsilon\) has length \(2\arcsin\epsilon\le4\epsilon\). An interval of length \(kL\) meets at most \(kL/\pi+2\) such period intervals. Changing variables \(z=kx-\theta\) therefore gives
\begin{equation}
 \left|\{x\in(0,L):|\cos(kx-\theta)|<\epsilon\}\right|
 \le 4\epsilon\left(\frac L\pi+\frac2k\right)
 \le10\epsilon L.
 \label{eq:trig-small}
\end{equation}
In the last inequality we used \(kL\ge1\) and \(4/\pi+8<10\).

Take \(\epsilon=\gamma/20\). Since \(|F|\ge\gamma L\), a subset of \(F\) of measure at least \(\gamma L/2\) lies outside the set in \eqref{eq:trig-small}. Thus
\begin{equation}
 \int_F\cos^2(kx-\theta)\dd x
 \ge \frac{\gamma L}{2}\left(\frac\gamma{20}\right)^2
 =\frac{\gamma^3L}{800}.
 \label{eq:cos-lower}
\end{equation}
For real \(A,B\) not both zero, write \(A\cos(kx)+B\sin(kx)=\sqrt{A^2+B^2}\cos(kx-\theta)\) and use \eqref{eq:cos-lower}. The assertion is trivial when both vanish. For complex \(A,B\), apply the real result separately to the real and imaginary parts and add. Finally, pointwise Cauchy--Schwarz yields \(|v(x)|^2\le|A|^2+|B|^2\), so \(\norm v_{L^2(0,L)}^2\le L(|A|^2+|B|^2)\). This proves \eqref{eq:trig-observation}.
\end{proof}

\subsection{Large negative constant part}
\begin{lemma}\label{lem:large-negative}
With \(D_\gamma\) as in \cref{lem:trig}, define
\begin{equation}
 k_*=
 \max\{1,L^{-1},2(D_\gamma+1)LM\}.
 \label{eq:kstar}
\end{equation}
If \(k\ge k_*\), \(\norm b_\infty\le M\), and \(f=-u''+(-k^2+b)u\) on \((0,L)\), then
\begin{equation}
 \norm u_{L^2(0,L)}
 \le2D_\gamma\norm u_{L^2(F)}
       +\frac{2(D_\gamma+1)L}{k}\norm f_{L^2(0,L)}.
 \label{eq:negative-local}
\end{equation}
\end{lemma}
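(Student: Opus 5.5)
We treat the equation as a perturbation of the constant-coefficient equation \(-u''-k^2u=0\) and use variation of constants. Rewrite the defining relation as
\[
 u''+k^2u=bu-f=:g\quad\text{on }(0,L).
\]
Every \(u\in H^2(0,L)\) then admits the representation
\[
 u(x)=A\cos(kx)+B\sin(kx)+w(x),\qquad
 w(x)=\frac1k\int_0^x\sin\bigl(k(x-s)\bigr)g(s)\dd s .
\]
Here \(A=u(0)\) and \(B=u'(0)/k\), and \(w\) is the particular solution with zero Cauchy data at \(0\). Set \(v=A\cos(kx)+B\sin(kx)\). Since \(k\ge k_*\ge L^{-1}\), we have \(kL\ge1\), so \cref{lem:trig} applies to \(v\) on \(F\).

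The key estimate is a bound on \(w\). Pointwise Cauchy--Schwarz gives
\[
 |w(x)|\le k^{-1}\sqrt{x}\,\norm g_{L^2(0,L)}.
\]
Squaring and integrating over \((0,L)\) gives \(\norm w_{L^2(0,L)}\le L\,k^{-1}\norm g_2\). (Using \(\int_0^L x\,dx=L^2/2\) would even give \(L/(\sqrt2 k)\), but the cruder constant suffices.) Hence
\[
 \norm w_2\le \frac{L}{k}\bigl(M\norm u_2+\norm f_2\bigr).
\]

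Next we combine the pieces with the triangle inequality and \eqref{eq:trig-observation}:
\[
 \norm u_2\le\norm v_2+\norm w_2
 \le D_\gamma\norm v_{L^2(F)}+\norm w_2
 \le D_\gamma\norm u_{L^2(F)}+(D_\gamma+1)\norm w_2 .
\]
The last step uses \(\norm v_{L^2(F)}\le\norm u_{L^2(F)}+\norm w_{L^2(F)}\le\norm u_{L^2(F)}+\norm w_2\). Inserting the bound on \(w\) yields
\[
 \norm u_2\le D_\gamma\norm u_{L^2(F)}+\frac{(D_\gamma+1)LM}{k}\norm u_2+\frac{(D_\gamma+1)L}{k}\norm f_2 .
\]

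The main (though mild) point is absorbing the \(\norm u_2\) term on the right. This is exactly what the choice \(k\ge k_*\ge 2(D_\gamma+1)LM\) ensures: the coefficient \((D_\gamma+1)LM/k\) is at most \(1/2\). Moving this term to the left and multiplying by \(2\) gives \eqref{eq:negative-local}, with the stated constants \(2D_\gamma\) and \(2(D_\gamma+1)L/k\).

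Two routine points remain. First, the variation-of-constants formula holds for \(H^2\) functions, since \(u-v-w\) solves the homogeneous ODE with zero Cauchy data at \(0\). Second, the condition \(k_*\ge1\) is not needed for this argument; it is presumably there for later use.
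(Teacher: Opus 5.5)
Your proof is correct and follows the paper's argument: variation of constants with the Cauchy data of $u$ at $0$, Lemma~\ref{lem:trig} for the homogeneous part (using $kL\ge1$), and absorption via $k\ge 2(D_\gamma+1)LM$. The only difference is cosmetic. You bound the Duhamel term by pointwise Cauchy--Schwarz rather than Young's inequality for the kernel, which gives the same (even slightly better) constant. You are also right that $k\ge1$ is not needed here; the paper uses it only when squaring \eqref{eq:negative-local} in the proof of Proposition~\ref{prop:local}.
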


\begin{proof}
The equation can be written as
\begin{equation}
 u''+k^2u=bu-f.
 \label{eq:variation-equation}
\end{equation}
Let
\[
 v(x)=u(0)\cos(kx)+\frac{u'(0)}k\sin(kx).
\]
The traces are well defined for \(u\in H^2(0,L)\). The difference \(u-v\) has zero Cauchy data at zero. The variation-of-constants formula, valid for an \(L^2\) forcing term, is
\begin{equation}
 u(x)-v(x)=\frac1k\int_0^x\sin(k(x-y))(b(y)u(y)-f(y))\dd y.
 \label{eq:variation-formula}
\end{equation}
For completeness, differentiating the integral in the weak sense gives \eqref{eq:variation-equation} and zero Cauchy data; uniqueness for the constant-coefficient initial-value problem proves the identity.

Young's inequality for the kernel \(\ind_{(0,L)}(z)\sin(kz)/k\) gives
\begin{equation}
 \norm{u-v}_{L^2(0,L)}
 \le\frac Lk\bigl(M\norm u_{L^2(0,L)}+\norm f_{L^2(0,L)}\bigr).
 \label{eq:variation-bound}
\end{equation}
Using \cref{lem:trig} and \(\norm{u-v}_{L^2(F)}\le\norm{u-v}_{L^2(0,L)}\), we obtain
\begin{align}
 \norm u_{L^2(0,L)}
 &\le\norm v_{L^2(0,L)}+\norm{u-v}_{L^2(0,L)}\notag\\
 &\le D_\gamma\norm u_{L^2(F)}
        +(D_\gamma+1)\norm{u-v}_{L^2(0,L)}\notag\\
 &\le D_\gamma\norm u_{L^2(F)}
       +\frac{(D_\gamma+1)LM}{k}\norm u_{L^2(0,L)}
       +\frac{(D_\gamma+1)L}{k}\norm f_{L^2(0,L)}.
 \label{eq:absorb-local}
\end{align}
By \eqref{eq:kstar}, the coefficient of \(\norm u_{L^2(0,L)}\) on the right is at most \(1/2\). Absorbing that term proves \eqref{eq:negative-local}.
\end{proof}

\subsection{Completion of the local estimate}
\begin{proof}[Proof of \cref{prop:local}]
For \(a\le-k_*^2\), take \(k=\sqrt{-a}\) and apply \cref{lem:large-negative}. Since \(k\ge1\), squaring \eqref{eq:negative-local} gives
\[
 \norm u_{L^2(0,L)}^2
 \le8D_\gamma^2\norm u_{L^2(F)}^2
      +8(D_\gamma+1)^2L^2\norm{-u''+(a+b)u}_{L^2(0,L)}^2.
\]
For \(-k_*^2\le a\le A_0\), the full coefficient has the uniform bound
\[
 \norm{a+b}_\infty\le Q_*:=\max\{k_*^2,A_0\}+M.
\]
\Cref{lem:bounded-local} applies with \(Q=Q_*\). Taking the maximum of its constant and the two constants in the preceding display gives \(C_{\mathrm{loc}}(L,\gamma,A_0,M)\) in \eqref{eq:local-main}. This proves the proposition for every \(a\le A_0\).
\end{proof}

\begin{remark}\label{rem:local-quantifiers}
The observation set and the coefficient may both vary in \cref{prop:local}. The result is not obtained by fixing \(F\) before taking a spectral limit. Also, no Dirichlet, Neumann or periodic condition is imposed at the endpoints. These facts allow the estimate to be applied to restrictions of a global function on each interval of a partition.
\end{remark}

\section{Global resolvent estimates}\label{sec:global}
\subsection{A uniform estimate with a fixed residual coefficient}
We first prove \cref{prop:resolvent-intro}. The proof separates the region where \(|x|-\lambda\) is bounded above from the region where it is uniformly positive.

\begin{proof}[Proof of \cref{prop:resolvent-intro}]
Fix \(\lambda\in\R\) and \(u\in D(H)\), and put
\begin{equation}
 f=(H-\lambda)u,\qquad q_\lambda(x)=|x|-\lambda.
 \label{eq:q-global}
\end{equation}
By \cref{lem:form}, the equation \(-u''+q_\lambda u=f\) is valid locally in \(L^2\), and \(u\) belongs to \(H^2\) on every bounded interval.

\step{Step 1. Intervals on which the local estimate applies.}
Partition \(\R\), up to endpoints, into
\[
 I_j=(jL,(j+1)L),\qquad j\in\Z.
\]
Let
\begin{equation}
 \mathcal J_\lambda=
 \left\{j\in\Z:\min_{\overline I_j}q_\lambda\le2\right\},
 \qquad \mathcal G_\lambda=\bigcup_{j\in\mathcal J_\lambda}I_j.
 \label{eq:selected-intervals}
\end{equation}
For a selected interval, write
\[
 q_\lambda(x)=a_j+b_j(x),\qquad
 a_j=\min_{\overline I_j}q_\lambda.
\]
The function \(x\mapsto|x|\) is \(1\)-Lipschitz, so
\begin{equation}
 a_j\le2,\qquad 0\le b_j(x)\le L\quad(x\in I_j).
 \label{eq:oscillation-potential}
\end{equation}
Since \(E\) is \((L,\gamma)\)-thick, \(|E\cap I_j|\ge\gamma L\). Translate the interval to \((0,L)\) and apply \cref{prop:local} with \(A_0=2\) and \(M=L\). With
\begin{equation}
 C_*=\max\{1,C_{\mathrm{loc}}(L,\gamma,2,L)\},
 \label{eq:Cstar}
\end{equation}
we obtain
\[
 \int_{I_j}|u|^2\dd x
 \le C_*\left(\int_{E\cap I_j}|u|^2\dd x+\int_{I_j}|f|^2\dd x\right).
\]
Summing over the disjoint selected intervals gives
\begin{equation}
 \int_{\mathcal G_\lambda}|u|^2\dd x
 \le C_*\bigl(\norm u_{L^2(E)}^2+\norm f_2^2\bigr).
 \label{eq:global-good}
\end{equation}
The set \(\{q_\lambda\le2\}\) is contained in \(\mathcal G_\lambda\) except at partition endpoints. Thus, on putting
\[
 A_\lambda=\int_{\{q_\lambda\le2\}}|u|^2\dd x,
\]
we have
\begin{equation}
 A_\lambda\le C_*\bigl(\norm u_{L^2(E)}^2+\norm f_2^2\bigr).
 \label{eq:Alambda}
\end{equation}
For a fixed \(\lambda\), the selected set may be empty. Then \eqref{eq:Alambda} is still valid, with \(A_\lambda=0\). No constant depends on how many intervals are selected.

\step{Step 2. The region where the coefficient is positive.}
Define the Lipschitz cutoff
\begin{equation}
 \chi_\lambda(x)=
 \begin{cases}
  0,&q_\lambda(x)\le1,\\
  q_\lambda(x)-1,&1<q_\lambda(x)<2,\\
  1,&q_\lambda(x)\ge2.
 \end{cases}
 \label{eq:chi-global}
\end{equation}
Then \(0\le\chi_\lambda\le1\), \(|\chi_\lambda'|\le1\) almost everywhere, and the derivative vanishes outside \(\{1<q_\lambda<2\}\). On the support of \(\chi_\lambda\), the coefficient \(q_\lambda\) is at least one. \Cref{lem:multipliers} gives
\begin{equation}
 \norm{\chi_\lambda u}_2^2
 \le \norm f_2\norm{\chi_\lambda u}_2
      +\int_{\{1<q_\lambda<2\}}|u|^2\dd x.
 \label{eq:positive-region}
\end{equation}
Here \(\norm{\chi_\lambda^2u}_2\le\norm{\chi_\lambda u}_2\) was used for the forcing term. Young's inequality yields
\begin{equation}
 \norm{\chi_\lambda u}_2^2
 \le\norm f_2^2+2A_\lambda.
 \label{eq:chi-mass}
\end{equation}

\step{Step 3. Combination.}
Since \(\chi_\lambda=1\) on \(\{q_\lambda\ge2\}\),
\begin{align*}
 \norm u_2^2
 &\le A_\lambda+\norm{\chi_\lambda u}_2^2\\
 &\le3A_\lambda+\norm f_2^2\\
 &\le3C_*\norm u_{L^2(E)}^2+(3C_*+1)\norm f_2^2.
\end{align*}
Taking
\begin{equation}
 C_0=3C_*+1
 \label{eq:C0-definition}
\end{equation}
proves \eqref{eq:main-resolvent} for all real \(\lambda\). Every constant depends only on \(L\) and \(\gamma\), as asserted.
\end{proof}

\begin{remark}
The cutoff in \eqref{eq:chi-global} is not compactly supported, but it is used only as a bounded Lipschitz multiplier in the closed-form identity. Thus the preceding proof does not assume unproved decay of \(u'\) at infinity. The same argument applies for negative spectral parameters, including those for which the set in \eqref{eq:selected-intervals} is empty.
\end{remark}

\subsection{An estimate with a remainder near the turning region}
The following form of the resolvent estimate is the one used in the arbitrary-time proof. Its remainder is a spatial term depending on the spectral parameter; no compactness of this remainder is asserted.

\begin{proposition}\label{prop:turning}
There are \(R_*=R_*(L,\gamma)\ge1\) and \(C_1=C_1(L,\gamma)\) such that, for every \((L,\gamma)\)-thick set \(E\), every \(R\ge R_*\), \(\lambda\in\R\), and \(u\in D(H)\),
\begin{equation}
 \begin{split}
 \norm u_2^2\le {}&C_1\norm u_{L^2(E)}^2
 +\frac{C_1}{R}\norm{(H-\lambda)u}_2^2\\
 &+3\int_{\{x:\,\lvert |x|-\lambda\rvert\le2R\}}|u(x)|^2\dd x.
 \end{split}
 \label{eq:turning-resolvent}
\end{equation}
\end{proposition}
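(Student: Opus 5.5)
Fix \(\lambda\), \(u\in D(H)\), put \(f=(H-\lambda)u\) and \(q_\lambda=|x|-\lambda\) as in \eqref{eq:q-global}. I split the line into three regions: the turning region \(\{|q_\lambda|\le 2R\}\), which is simply kept on the right-hand side; the classically forbidden region \(\{q_\lambda>2R\}\); and the oscillatory region \(\{q_\lambda<-2R\}\). The idea is that on both of the last two regions \(|q_\lambda|\gtrsim R\), so the forcing term gains a factor \(R^{-1}\). The cost is transition terms, which must be supported inside \(\{|q_\lambda|\le 2R\}\) so that they merge into the retained integral with total coefficient at most \(3\).

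\emph{Forbidden region.} I will use \cref{lem:multipliers} with a Lipschitz cutoff \(\chi\) equal to \(0\) on \(\{q_\lambda\le R\}\) and to \(1\) on \(\{q_\lambda\ge2R\}\), linear in \(q_\lambda\) in between, so that \(|\chi'|\le R^{-1}\). On \(\supp\chi\) we have \(q_\lambda\ge R\), and \eqref{eq:cutoff-identity} gives
\[
R\norm{\chi u}_2^2\le\norm f_2\norm{\chi u}_2+R^{-2}\int_{\{R<q_\lambda<2R\}}|u|^2 .
\]
Young's inequality then yields
\[
\norm{\chi u}_2^2\le R^{-2}\norm f_2^2+2R^{-3}\int_{\{|q_\lambda|\le2R\}}|u|^2 ,
\]
and for \(R\ge R_*\ge1\) this is harmless.

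\emph{Oscillatory region.} Here \(q_\lambda<-2R\), which forces \(\lambda>2R\), and the set is \(\{|x|<\lambda-2R\}\). I partition it into intervals \(I_j=(jL,(j+1)L)\) lying inside \(\{q_\lambda\le -R\}\); the leftover intervals touching the boundary lie in \(\{|q_\lambda|\le2R\}\) once \(R\ge L\). On each such \(I_j\) I write \(q_\lambda=-k_j^2+b_j\) with \(0\le b_j\le L\) and \(k_j^2\ge R-L\ge k_*^2\). For this I choose \(R_*\ge k_*^2+L\), with \(k_*=k_*(L,\gamma,M=L)\) from \eqref{eq:kstar}. Since \(E\) is thick, \(|E\cap I_j|\ge\gamma L\), and \cref{lem:large-negative}, squared, gives
\[
\norm u_{L^2(I_j)}^2\le 8D_\gamma^2\norm u_{L^2(E\cap I_j)}^2+\frac{8(D_\gamma+1)^2L^2}{k_j^2}\norm f_{L^2(I_j)}^2 .
\]
Using \(k_j^2\ge R/2\) when \(R\ge 2L\), and summing over \(j\), the oscillatory mass is bounded by \(C\norm u_{L^2(E)}^2+CR^{-1}\norm f_2^2\) with \(C=C(L,\gamma)\).

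\emph{Combination and main difficulty.} I add the three pieces: the oscillatory intervals, the leftover intervals (part of the turning integral), the set \(\{|q_\lambda|\le2R\}\) itself, and \(\norm{\chi u}_2^2\) covering \(\{q_\lambda\ge2R\}\). The turning-region integral is counted at most once directly, once via leftover intervals (which are disjoint from it only if we assign them carefully, so coefficient at most \(2\)), plus \(2R^{-3}\le1\). This keeps the coefficient at most \(3\) and gives \eqref{eq:turning-resolvent}, with \(C_1=\max(8D_\gamma^2,C)\) and \(R_*=\max\{1,2L,k_*^2+L\}\). The point I expect to require care is exactly this bookkeeping. Intervals straddling \(q_\lambda=-2R\) must be assigned to the turning region rather than to the oscillatory estimate. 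I must also check that \(\{q_\lambda<-2R\}\) is covered by full intervals lying in \(\{q_\lambda\le-R\}\), using that \(q_\lambda\) is \(1\)-Lipschitz, so an interval meeting \(\{q_\lambda<-2R\}\) lies in \(\{q_\lambda<-2R+L\}\subset\{q_\lambda\le -R\}\). With that observation no leftover intervals are even needed, and the coefficient \(3\) is comfortably reached.
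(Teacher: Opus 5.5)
Your proposal is correct and is essentially the paper's own proof. Both use the same Lipschitz cutoff with \cref{lem:multipliers} on \(\{q_\lambda\ge2R\}\), giving \(R^{-2}\norm f_2^2+2R^{-3}\int_{\{|q_\lambda|\le2R\}}|u|^2\). Both apply \cref{lem:large-negative} on the length-\(L\) intervals meeting \(\{q_\lambda\le-2R\}\); the paper takes \(k^2=-\min q_\lambda\ge2R\) there, with \(0\le b\le L\) by the \(1\)-Lipschitz property. Both count the turning mass with coefficient \(1+2R^{-3}\le3\).

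Your closing observation is exactly the paper's selection rule, so you can delete the earlier ``leftover interval'' bookkeeping. As first phrased it was not right, since an interval straddling \(q_\lambda=-2R\) is not contained in \(\{|q_\lambda|\le2R\}\).
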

\begin{proof}
Write \(q_\lambda=|x|-\lambda\) and \(g=(H-\lambda)u\). Partition the line into the intervals \(I_k=(kL,(k+1)L)\).

\step{Step 1. The region \(q_\lambda\le-2R\).}
Select every interval whose closure meets \(\{q_\lambda\le-2R\}\). On each selected interval let
\[
 k_I^2=-\min_{\overline I}q_\lambda\ge2R,
 \qquad b_I=q_\lambda+k_I^2.
\]
Then \(0\le b_I\le L\), because \(|x|\) is \(1\)-Lipschitz. Choose \(R_*\) large enough that \(\sqrt{2R_*}\) exceeds the threshold in \cref{lem:large-negative} with interval length \(L\), relative measure \(\gamma\), and \(M=L\). That lemma gives
\[
 \norm u_{L^2(I)}\le
 2D_\gamma\norm u_{L^2(E\cap I)}
 +\frac{2(D_\gamma+1)L}{k_I}\norm g_{L^2(I)}.
\]
After squaring and summing over the disjoint selected intervals,
\begin{equation}
 \int_{\{q_\lambda\le-2R\}}|u|^2\dd x
 \le C\norm u_{L^2(E)}^2+\frac CR\norm g_2^2.
 \label{eq:turning-negative}
\end{equation}
The estimate is also valid if there are no selected intervals.

\step{Step 2. The region \(q_\lambda\ge2R\).}
Set
\[
 \eta_R(x)=
 \begin{cases}
 0,&q_\lambda(x)\le R,\\
 (q_\lambda(x)-R)/R,&R<q_\lambda(x)<2R,\\
 1,&q_\lambda(x)\ge2R.
 \end{cases}
\]
This function is a bounded Lipschitz multiplier, with \(|\eta_R'|\le R^{-1}\). Using \cref{lem:multipliers} and dropping the nonnegative derivative term, we obtain
\[
 R\norm{\eta_Ru}_2^2
 \le \norm g_2\norm{\eta_Ru}_2
 +R^{-2}\int_{\{R<q_\lambda<2R\}}|u|^2\dd x.
\]
Young's inequality gives
\begin{equation}
 \int_{\{q_\lambda\ge2R\}}|u|^2\dd x
 \le R^{-2}\norm g_2^2+
 2R^{-3}\int_{\{|q_\lambda|\le2R\}}|u|^2\dd x.
 \label{eq:turning-positive}
\end{equation}
All terms are justified in the form domain; \(\eta_R\) need not be compactly supported.

\step{Step 3. Combination.}
Add \eqref{eq:turning-negative}, \eqref{eq:turning-positive}, and the mass in \(\{|q_\lambda|\le2R\}\). Since \(R\ge1\), the coefficient of this last mass is at most \(1+2R^{-3}\le3\), and the residual coefficient is bounded by \(C_1/R\). This proves \eqref{eq:turning-resolvent}.
\end{proof}

The choice of \(R\) is independent of \(\lambda\) and of the detailed geometry of \(E\). The last term in \eqref{eq:turning-resolvent} must be retained: it includes states whose large total energy is mainly due to their spatial position. It will be treated by the evolution equation and local compactness, not by an additional thinness assumption.

\section{Unique continuation from a measurable spatial set}\label{sec:unique}
The proof of sufficiency produces a solution that belongs to \(L^2\) only locally in space. This section establishes the unique-continuation statement in precisely that class. We first state the open-set result used in the proof and then give the measurable-set reduction. A direct proof of the resulting one-dimensional proposition is also supplied in \cref{sec:direct-ucp}.

\subsection{The open-set theorem}
\begin{lemma}[Filippas--Laurent--L\'eautaud]\label{lem:open-ucp}
Let \(I,J\subset\R\) be connected open intervals and let \(V\in L^\infty_{\loc}(J)\) be independent of time. If \(O\subset J\) is a nonempty open interval and
\begin{equation}
 W\in L^2_{\loc}(I\times J),\qquad
 (i\partial_t+\partial_x^2-V(x))W=0
       \quad\text{in }\mathcal D'(I\times J),
 \qquad W=0\quad\text{on }I\times O,
 \label{eq:open-ucp}
\end{equation}
then \(W=0\) on \(I\times J\).
\end{lemma}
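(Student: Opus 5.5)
To prove \cref{lem:open-ucp}, I will reduce to a stationary problem by a temporal Fourier multiplier, in the spirit of \cref{sec:preliminaries}. The open-set theorem of Filippas--Laurent--L\'eautaud could be quoted directly. For a self-contained argument in one dimension, however, I would proceed as follows.

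Fix a compact subinterval \(I'\Subset I\) and a cutoff \(\chi\in C_c^\infty(I)\) equal to one on \(I'\). Set \(w=\chi W\), extended by zero in time. Then
\[
 (i\partial_t+\partial_x^2-V)w=i\chi'W
\]
holds in \(\mathcal D'(\R\times J)\), and both sides vanish on \(\R\times O\). Since \(\chi'W\) need not vanish outside \(O\), a naive Fourier transform in \(t\) does not give a homogeneous ODE. I would therefore first regularize in time. Let \(\rho_\epsilon\) be a mollifier in \(t\). Because \(V\) is independent of \(t\), the function \(W_\epsilon=\rho_\epsilon*_tW\) solves the same equation on \(I_\epsilon\times J\), vanishes on \(I_\epsilon\times O\), and is smooth in \(t\). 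All its time derivatives solve the equation as well, so \(\partial_x^2W_\epsilon=VW_\epsilon-i\partial_tW_\epsilon\) lies in \(L^2_{\loc}\). This gives \(W_\epsilon(t,\cdot)\in H^2_{\loc}(J)\) for each \(t\), with Cauchy data \(W_\epsilon=\partial_xW_\epsilon=0\) at a point \(x_0\in O\).

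The key step is then an integral equation in \(x\). Writing \(\partial_x^2W_\epsilon=VW_\epsilon-i\partial_tW_\epsilon\) and integrating twice from \(x_0\) gives
\[
 W_\epsilon(t,x)=\int_{x_0}^x(x-y)\bigl(V(y)W_\epsilon(t,y)-i\partial_tW_\epsilon(t,y)\bigr)\dd y .
\]
The \(\partial_t\) term loses a time derivative, so Gr\"onwall's inequality does not close directly. I would handle this by a Cauchy--Kovalevskaya\slash Holmgren type argument. Analyticity in \(t\) is not available, so instead I would pass to the time Fourier transform of \(\chi W_\epsilon\) at fixed frequency \(\tau\), which turns the equation into an ODE
\[
 -\partial_x^2\widehat w=(\tau-V)\widehat w+(\text{source from }\chi').
\]
The source is supported where \(\chi'\ne0\), and that is the point of difficulty. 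I would deal with it by taking \(\chi\) in the form used in \cref{lem:compact-time-injectivity}, so that the source contributes an entire function of \(\tau\), and then compare supports.

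I expect this source term to be the main obstacle. The fallback is to invoke the cited theorem: in one space dimension the roles of \(x\) and \(t\) can be exchanged. Regard the equation as the evolution \(\partial_x^2W=VW-i\partial_tW\) in the variable \(x\), with zero Cauchy data on \(\{x_0\}\times I\). The line \(\{x=x_0\}\) is non-characteristic for the Schr\"odinger operator, since the principal symbol \(\xi^2\) is nonzero on its conormal. Tataru--H\"ormander--Robbiano--Zuily unique continuation across non-characteristic hypersurfaces for operators with time-independent coefficients then gives vanishing in a neighborhood. A connectedness argument in \(x\) over \(J\) propagates the zero set to all of \(I\times J\). Finally, I would let \(\epsilon\to0\), since \(W_\epsilon\to W\) in \(L^2_{\loc}\), and this yields \(W=0\).
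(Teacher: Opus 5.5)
Your self-contained route has a genuine gap, and it sits exactly where you stall. The Fourier-in-time variant with the \(\chi'\) source cannot be rescued as sketched. The source \(\widehat{\chi'W_\epsilon}(\tau,\cdot)\) is generically nonzero outside \(O\). Its being entire in \(\tau\) gives no relation that forces \(\widehat w(\tau,\cdot)\) to vanish, so ``comparing supports'' has no content.

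The missing idea is that analyticity in \(t\) is not needed: Gevrey regularity of order \(s<2\) in time suffices for your Volterra identity. This is how the paper's direct proof in Appendix~\ref{sec:direct-ucp} proceeds; the argument is local in \(x\), so it applies verbatim on \(J\) with any \(x_0\in O\).

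\begin{itemize}
\item Mollify in time with a compactly supported kernel satisfying \(\norm{\eta^{(k)}}_2\le CA^k(k!)^{3/2}\) (\cref{lem:gevrey-kernel}). Together with local elliptic regularity this gives \(\sup_{K\times J}|\partial_t^kW_\epsilon|\le MA_\epsilon^k((k+1)!)^{3/2}\).
\item Put \(\mathcal JF(x)=\int_{x_0}^x(x-y)F(y)\dd y\), \(Q=\norm V_{L^\infty(J)}\) and \(r=\sup_J|x-x_0|\). Iterate your identity \(m\) times. Each word with \(k\) time derivatives is bounded by \(Q^{m-k}\mathcal J^m|\partial_t^kW_\epsilon|\), and \(\mathcal J^m1=(x-x_0)^{2m}/(2m)!\). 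Hence
\[
 |W_\epsilon(t,x)|\le M\frac{[r^2(Q+A_\epsilon)]^m((m+1)!)^{3/2}}{(2m)!}\longrightarrow0 .
\]
\end{itemize}

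The twofold spatial integration produces \((2m)!\), which beats \(((m+1)!)^{3/2}\) precisely because \(3/2<2\).

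Your fallback is a legitimate alternative to what the paper does for this lemma. The paper applies \cite[Theorem~1.5]{FLL25} with \(g=1\), no first-order term, and the time-independent zeroth-order coefficient \(-V\); this gives the global statement on \(I\times J\) in one step. Two points in your version need repair.

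\begin{itemize}
\item \(V\) is only \(L^\infty_{\loc}\). You therefore need a form of the Tataru/Robbiano--Zuily/H\"ormander theorem that allows bounded lower-order coefficients which are merely measurable in \(x\). After time mollification, the regularity of the solution is not an issue.
\item More substantively, the ``connectedness argument in \(x\)'' does not close with vertical lines alone. Local unique continuation across \(\{x=x_1\}\) gives vanishing near each \((t,x_1)\), but the neighborhoods may shrink as \(t\to\partial I\). So the set of \(x_1\) with \(W=0\) on \(I\times(x_1-\delta,x_1+\delta)\) is not shown to be closed. A one-sided half-box argument also fails wherever the vanishing front is monotone in \(t\).
\end{itemize}

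The fix uses two facts. Every graph \(x=g(t)\) is non-characteristic, since its conormal is \((1,-g'(t))\) and the principal symbol is \(\xi^2\). The pseudoconvexity condition in the partially analytic theorem is only tested on \(\tau=0\), where the operator is elliptic in \(x\). Now sweep with the surfaces \(x=x_1+s(b-x_1)\psi(t)\), where \(\psi\in C_c^\infty(I)\), \(0\le\psi\le1\), and \(\psi=1\) on a given compact \(K\). Compactness of the surface piece over \(\supp\psi\), in a continuity argument in \(s\in[0,1]\), gives \(W=0\) on \(K\times(x_1,b)\) for every such \(K\) and every \(b\) with \([x_1,b]\subset J\).

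With these additions the fallback proves the lemma. It rests on a heavier local theorem plus a geometric sweep, whereas the cited theorem gives the conclusion directly and the Gevrey iteration gives an elementary one-dimensional proof.
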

\begin{proof}[Deduction from the cited theorem]
Apply~\cite[Theorem~1.5]{FLL25}, in its version for local \(L^2\) solutions, on the connected one-dimensional manifold \(J\). In its notation the metric is \(g=1\), the first-order coefficient is zero, and the zeroth-order coefficient is \(-V\). The metric is Lipschitz, the divergence of the first-order coefficient is zero, and the zeroth-order coefficient is independent of time. In particular all its positive-order time derivatives vanish, so it satisfies the stated Gevrey-in-time hypothesis with values in \(L^\infty\) on each compact spatial interval. A translation and restriction of time gives the conclusion on every relatively compact time interval in \(I\). Exhaustion proves the assertion. The reference does not require a global \(L^2(J)\) norm or a bound for \(V\) on all of \(J\).
\end{proof}

\subsection{Time regularization and spatial traces}
\begin{lemma}\label{lem:ucp-regularization}
Let \(I\subset\R\) be open, \(V\in L^\infty_{\loc}(\R)\) be independent of time, and let
\[
 U\in L^2_{\loc}(I\times\R),\qquad
 (i\partial_t+\partial_x^2-V)U=0\quad\text{in }\mathcal D'(I\times\R).
\]
Choose \(\eta\in C_c^\infty(-1,1)\), and put \(\eta_\varepsilon(t)=\varepsilon^{-1}\eta(t/\varepsilon)\). On
\(I_\varepsilon=\{t\in I:\operatorname{dist}(t,\R\setminus I)>\varepsilon\}\), define
\[
 Y_\varepsilon(t,x)=\int_{-\varepsilon}^{\varepsilon}
           \eta_\varepsilon(s)U(t-s,x)\dd s.
\]
Then
\begin{equation}
 (i\partial_t+\partial_x^2-V)Y_\varepsilon=0
       \quad\text{in }\mathcal D'(I_\varepsilon\times\R),
 \qquad
 Y_\varepsilon\in C^\infty(I_\varepsilon;H^2_{\loc}(\R)).
 \label{eq:ucp-regularized}
\end{equation}
\end{lemma}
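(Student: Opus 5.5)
The plan is to verify the two assertions in \eqref{eq:ucp-regularized} separately, working throughout with test functions and Fubini in the time variable. First, $Y_\varepsilon$ is well defined on $I_\varepsilon\times\R$. For $t\in I_\varepsilon$ and $|s|<\varepsilon$ we have $t-s\in I$, so the integral makes sense for almost every $x$. Since $U\in L^2_{\loc}$, Cauchy--Schwarz in $s$ shows that $Y_\varepsilon\in L^2_{\loc}(I_\varepsilon\times\R)$.

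\textbf{The equation.} Let $\phi\in C_c^\infty(I_\varepsilon\times\R)$. By Fubini,
\[
 \ip{Y_\varepsilon}{\phi}=\ip{U}{\check\eta_\varepsilon*_t\phi},
 \qquad (\check\eta_\varepsilon*_t\phi)(t,x)=\int\eta_\varepsilon(s)\phi(t+s,x)\dd s,
\]
with the appropriate conjugations. The function $\check\eta_\varepsilon*_t\phi$ is smooth with compact support in $I\times\R$, precisely because $\supp\phi$ lies at distance more than $\varepsilon$ from $\R\setminus I$ in time. The operator $i\partial_t+\partial_x^2-V$ commutes with time convolution, since $V$ does not depend on $t$ and multiplication by $V$ acts only in $x$. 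Transposing therefore gives
\[
 \langle (i\partial_t+\partial_x^2-V)Y_\varepsilon,\phi\rangle
 =\langle (i\partial_t+\partial_x^2-V)U,\check\eta_\varepsilon*_t\phi\rangle=0.
\]
The pairing with $VU$ is legitimate because $V\in L^\infty_{\loc}$.

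\textbf{Regularity.} The time derivatives fall on the kernel:
\[
 \partial_t^kY_\varepsilon(t,x)=\int\eta_\varepsilon^{(k)}(t-s)U(s,x)\dd s .
\]
Hence every $\partial_t^kY_\varepsilon$ belongs to $L^\infty_{\loc}(I_\varepsilon;L^2_{\loc}(\R))$. Indeed, for $t$ in a compact set $K\subset I_\varepsilon$ and $x$ in a bounded interval $J$, Cauchy--Schwarz gives
\[
 \norm{\partial_t^kY_\varepsilon(t)}_{L^2(J)}\le \norm{\eta_\varepsilon^{(k)}}_2\norm U_{L^2(K_\varepsilon\times J)},
\]
where $K_\varepsilon$ is the compact $\varepsilon$-neighbourhood of $K$, which lies inside $I$. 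Dominated convergence applied to difference quotients upgrades this to $t\mapsto \partial_t^kY_\varepsilon(t)$ being continuous, and then $C^1$, into $L^2_{\loc}(\R)$. The key point is that $\eta_\varepsilon^{(k)}(t-\cdot)$ is continuous in $t$ in $L^2$, so $Y_\varepsilon\in C^\infty(I_\varepsilon;L^2_{\loc}(\R))$.

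Next use the equation slice by slice. The equation holds in $\mathcal D'(I_\varepsilon\times\R)$, and both $\partial_tY_\varepsilon$ and $VY_\varepsilon$ are continuous in $t$ with values in $L^2_{\loc}$. Testing against products $\alpha(t)\beta(x)$ shows that, for every $t\in I_\varepsilon$, in $\mathcal D'(\R)$,
\[
 \partial_x^2Y_\varepsilon(t)=VY_\varepsilon(t)-i\partial_tY_\varepsilon(t)\in L^2_{\loc}(\R).
\]
One first gets this for almost every $t$; continuity of both sides in $t$, in the distributional sense, then gives it for every $t$. Lemma~\ref{lem:local-elliptic} now yields $Y_\varepsilon(t)\in H^2_{\loc}$, with $H^2(J)$-norm controlled by the $L^2(J_*)$-norms of $Y_\varepsilon(t)$ and $\partial_tY_\varepsilon(t)$.

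Applying the same identity to $\partial_t^kY_\varepsilon$, which satisfies the same equation, together with linearity of the estimate \eqref{eq:local-elliptic} applied to differences and difference quotients in $t$, gives continuity and differentiability of $t\mapsto\partial_t^kY_\varepsilon(t)$ in $H^2(J)$. This yields $Y_\varepsilon\in C^\infty(I_\varepsilon;H^2_{\loc}(\R))$.

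\textbf{Main obstacle.} The delicate step is passing from the distributional identity on $I_\varepsilon\times\R$ to a pointwise-in-time identity valid for every $t$. I would handle it by the continuity argument above, namely testing with $\alpha(t)\beta(x)$ and letting $\alpha$ concentrate at a point, which is legitimate because every term is already continuous in $t$ with values in $L^2_{\loc}$.
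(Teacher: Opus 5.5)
Your proposal is correct and follows essentially the same route as the paper. You move the time derivatives onto the kernel with the Cauchy--Schwarz bound on compact time sets, and you obtain the equation by convolving the distributional equation in time, using that $V$ is time-independent. You then apply Lemma~\ref{lem:local-elliptic} to differences and difference quotients, via the slice identity $\partial_x^2\partial_t^kY_\varepsilon=V\partial_t^kY_\varepsilon-i\partial_t^{k+1}Y_\varepsilon$, to get $C^\infty(I_\varepsilon;H^2_{\loc}(\R))$. Your explicit duality computation, and your upgrade of the slice identity from almost every $t$ to every $t$ by continuity, are slightly more detailed than the paper's wording but do not change the argument.
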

\begin{proof}
Fix a compact time interval \(K\Subset I_\varepsilon\) and bounded spatial intervals \(J\Subset J_*\). Let \(K_*\Subset I\) contain all \(t-s\) with \(t\in K\) and \(|s|\le\varepsilon\). On this fixed set the convolution is an ordinary convolution of an \(L^2(K_*;L^2(J_*))\) function with a smooth kernel. For every \(k\ge0\),
\begin{equation}
 \sup_{t\in K}\norm{\partial_t^kY_\varepsilon(t)}_{L^2(J_*)}
 \le\norm{\eta_\varepsilon^{(k)}}_{L^2(\R)}
      \norm U_{L^2(K_*\times J_*)}.
 \label{eq:ucp-time-derivative}
\end{equation}
Differentiability and continuity with values in \(L^2(J_*)\) follow by differentiating the smooth kernel in that norm. Since \(V\) is independent of time, testing the original equation and convolving in time proves the first identity in \eqref{eq:ucp-regularized}. Its time derivatives satisfy
\begin{equation}
 \partial_x^2\partial_t^kY_\varepsilon
  =V\partial_t^kY_\varepsilon-i\partial_t^{k+1}Y_\varepsilon.
 \label{eq:ucp-spatial-derivative}
\end{equation}
For almost every time the right-hand side belongs to \(L^2(J_*)\). \Cref{lem:local-elliptic} therefore gives local \(H^2\) regularity. It also gives continuity in that norm: apply the estimate to
\(\partial_t^kY_\varepsilon(t+h)-\partial_t^kY_\varepsilon(t)\), and use the \(L^2(J_*)\) continuity of that difference and of its second derivative in \eqref{eq:ucp-spatial-derivative}. The same estimate applied to difference quotients minus \(\partial_t^{k+1}Y_\varepsilon\) proves differentiability in \(H^2(J)\). Iterating gives \eqref{eq:ucp-regularized}. This argument is local in time and does not require integrability up to the endpoints of \(I\).
\end{proof}

\subsection{Unique continuation from positive spatial measure}
\begin{proposition}\label{prop:measurable-ucp}
Let \(I\subset\R\) be a connected open interval and let \(V\in L^\infty_{\loc}(\R)\) be independent of time. Suppose that
\begin{equation}
 U\in L^2_{\loc}(I\times\R),\qquad
 i\partial_tU=-\partial_x^2U+V(x)U
        \quad\text{in }\mathcal D'(I\times\R).
 \label{eq:measurable-ucp-equation}
\end{equation}
If \(S\subset\R\) is measurable, \(|S|>0\), and \(U=0\) almost everywhere on \(I\times S\), then \(U=0\) almost everywhere on \(I\times\R\).
\end{proposition}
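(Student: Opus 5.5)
The plan is to reduce to the open-set theorem \cref{lem:open-ucp} by first regularizing in time, then using the ODE uniqueness of \cref{lem:ode-unique} on each time slice. Fix \(\varepsilon>0\) small and apply \cref{lem:ucp-regularization} to obtain \(Y_\varepsilon\in C^\infty(I_\varepsilon;H^2_{\loc}(\R))\) solving the same equation on \(I_\varepsilon\times\R\). Since \(U=0\) almost everywhere on \(I\times S\) and the convolution acts only in time, \(Y_\varepsilon(t,\cdot)=0\) almost everywhere on \(S\) for every \(t\in I_\varepsilon\). Here one uses Fubini to see that, for almost every \(x\in S\), \(U(\cdot,x)=0\) a.e.\ on \(I\). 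Continuity of \(t\mapsto Y_\varepsilon(t)\) in \(L^2_{\loc}\) then upgrades "almost every \(t\)" to every \(t\). Replacing \(S\) by \(S\cap(-N,N)\), we may assume \(S\) is bounded with \(|S|>0\).

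The key step is to show that the same vanishing holds for all time derivatives. Since \(\partial_t^kY_\varepsilon\in C(I_\varepsilon;H^2_{\loc})\) is the \(L^2_{\loc}\)-limit of difference quotients of functions vanishing a.e.\ on \(S\), it also vanishes a.e.\ on \(S\) for every \(t\). Now fix \(t\) and a bounded open interval \(J\supset S\). The function \(w=Y_\varepsilon(t)\in H^2(J)\) satisfies
\[
 w''=Vw-i\partial_tY_\varepsilon(t).
\]
This is not a homogeneous ODE, so \cref{lem:ode-unique} does not apply directly. I would pass to a density point \(x_0\) of \(S\) instead. Every \(\partial_t^kY_\varepsilon(t)\) is \(C^1\) and vanishes on \(S\), so each vanishes together with its first derivative at \(x_0\), and also at every accumulation point of \(S\). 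I would then aim at an open-set vanishing. If \(|S|>0\), then \(S\) has a point \(x_0\) that is an accumulation point of \(S\), and the equation gives that \(\partial_x^2\partial_t^kY_\varepsilon\) vanishes a.e.\ on \(S\). This still does not produce an open set, so the honest route is the direct one: use the Cauchy data at \(x_0\).

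Concretely, regard \((Y_\varepsilon,\partial_xY_\varepsilon)\) as the solution of the first-order system in \(x\)
\[
 \partial_x\begin{pmatrix}Y\\ \partial_xY\end{pmatrix}
 =\begin{pmatrix}0&1\\ V-i\partial_t&0\end{pmatrix}\begin{pmatrix}Y\\ \partial_xY\end{pmatrix},
\]
with zero Cauchy data at \(x=x_0\) for all \(t\in I_\varepsilon\). This is a Cauchy problem for a Schrödinger equation with respect to the space variable and data on the line \(\{x=x_0\}\). It is the classical non-characteristic situation, and Holmgren-type uniqueness fails for nonanalytic \(V\), so I would not pursue it directly. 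Instead, I would observe that zero Cauchy data on the time-like line \(\{x_0\}\times I_\varepsilon\) allow extending \(Y_\varepsilon\) by zero across \(x_0\) on one side. Precisely, define \(\tilde Y=Y_\varepsilon\) for \(x<x_0\) and \(\tilde Y=0\) for \(x>x_0\). Because both \(Y\) and \(\partial_xY\) vanish at \(x_0\), no jump terms arise, and \(\tilde Y\) is a distributional solution on \(I_\varepsilon\times\R\) of the same equation. It vanishes on the open set \(I_\varepsilon\times(x_0,\infty)\). By \cref{lem:open-ucp}, \(\tilde Y\equiv0\), so \(Y_\varepsilon=0\) for \(x<x_0\); symmetrically for \(x>x_0\). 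Hence \(Y_\varepsilon\equiv0\) on \(I_\varepsilon\times\R\).

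Finally, let \(\varepsilon\to0\) with \(\int\eta=1\). Then \(Y_\varepsilon\to U\) in \(L^2_{\loc}(I\times\R)\), which gives \(U=0\). I expect the main obstacle to be the gluing step. One must verify that the distributional identity for \(\tilde Y\) holds, which requires traces of \(Y_\varepsilon\) and \(\partial_xY_\varepsilon\) at \(x_0\) continuous in \(t\); these come from \(C(I_\varepsilon;H^2_{\loc})\). One must also check that \(x_0\) can be chosen independently of \(t\). This works because the accumulation-point argument uses only the fixed set \(S\), for example a fixed Lebesgue density point.
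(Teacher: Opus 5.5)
Your proposal is correct and follows the paper's proof essentially step for step: time regularization via \cref{lem:ucp-regularization}, a time-independent Lebesgue density point $x_0$ at which $Y_\varepsilon$ and $\partial_xY_\varepsilon$ vanish for all $t\in I_\varepsilon$, extension by zero across $\{x=x_0\}$ with no boundary terms, the open-set theorem \cref{lem:open-ucp} on each side, and removal of the regularization. You make $x_0$ independent of $t$ by noting that a density point of $S$ stays a density point after deleting the $t$-dependent null set, which is a slightly lighter substitute for the paper's construction of $S_\varepsilon$ from countably many dense times; your detours about time derivatives are unnecessary, and your aside claiming vanishing ``at every accumulation point of $S$'' is false as stated (a.e.\ vanishing on $S$ gives nothing near a null part of $S$), but your final argument never uses it.
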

\begin{proof}
It suffices to prove the assertion on an arbitrary bounded subinterval of \(I\). We consequently write \(I=(a,b)\), with finite endpoints; restriction and exhaustion will give the general case. Choose a bounded interval meeting \(S\) in positive measure and replace \(S\) by that intersection. Thus \(0<|S|<\infty\).

\step{Step 1. Regularization and a common spatial zero set.}
Choose \(\eta\ge0\) as in \cref{lem:ucp-regularization}, with \(\int\eta=1\), and take \(0<\varepsilon<(b-a)/4\). Set
\[
 I_\varepsilon=(a+\varepsilon,b-\varepsilon),\qquad
 Y_\varepsilon=\eta_\varepsilon*_tU.
\]
By Fubini's theorem and the vanishing assumption, \(Y_\varepsilon=0\) almost everywhere on \(I_\varepsilon\times S\). There is consequently a full-measure set \(D\subset I_\varepsilon\) such that, for every \(t\in D\), the continuous spatial representative of \(Y_\varepsilon(t)\) is zero almost everywhere on \(S\). Choose a countable dense subset \(\{t_k:k\ge1\}\) of \(D\). For each \(k\), remove the null subset of \(S\) on which that vanishing fails, and call the remaining intersection \(S_\varepsilon\). Then \(|S_\varepsilon|=|S|>0\) and
\[
 Y_\varepsilon(t_k,x)=0\qquad(k\ge1,\ x\in S_\varepsilon).
\]
By \cref{lem:ucp-regularization} and the one-dimensional embedding \(H^2(J)\hookrightarrow C^1(\overline J)\), evaluation at a fixed spatial point is continuous in time. Hence
\begin{equation}
 Y_\varepsilon(t,x)=0
       \qquad(t\in I_\varepsilon,\ x\in S_\varepsilon).
 \label{eq:ucp-common-zero-set}
\end{equation}
The representatives used on overlapping intervals agree, since two continuous functions that agree almost everywhere agree everywhere.

\step{Step 2. A fixed point with zero Cauchy data.}
Choose a Lebesgue density point \(x_0\) of \(S_\varepsilon\), and distinct points \(x_k\in S_\varepsilon\) tending to \(x_0\). The spatial \(C^1\) regularity gives, for each \(t\in I_\varepsilon\),
\begin{equation}
 Y_\varepsilon(t,x_0)=0,
 \qquad
 \partial_xY_\varepsilon(t,x_0)
 =\lim_{k\to\infty}
     \frac{Y_\varepsilon(t,x_k)-Y_\varepsilon(t,x_0)}{x_k-x_0}=0.
 \label{eq:ucp-zero-traces}
\end{equation}
The same \(x_0\) works for every time. It is allowed to depend on \(\varepsilon\).

\step{Step 3. Extension by zero across the spatial point.}
Define
\[
 W_\varepsilon^+(t,x)=\ind_{(x_0,\infty)}(x)Y_\varepsilon(t,x).
\]
For a spatial test function \(\varphi\), two integrations by parts on \((x_0,\infty)\) give
\[
 \int_{x_0}^\infty Y_\varepsilon\varphi''\dd x
 =\int_{x_0}^\infty\partial_x^2Y_\varepsilon\varphi\dd x
   +\partial_xY_\varepsilon(t,x_0)\varphi(x_0)
   -Y_\varepsilon(t,x_0)\varphi'(x_0).
\]
This identity is valid for \(H^2\) functions by approximation and their trace continuity. Integrating also against a compactly supported time test function proves the distributional formula
\[
 \partial_x^2W_\varepsilon^+
 =\ind_{(x_0,\infty)}\partial_x^2Y_\varepsilon
   +\partial_xY_\varepsilon(t,x_0)\delta_{x_0}
   +Y_\varepsilon(t,x_0)\delta'_{x_0}.
\]
Both terms supported at \(x_0\) vanish by \eqref{eq:ucp-zero-traces}. The indicator is independent of time, so it commutes with \(\partial_t\); it also commutes with multiplication by \(V\). Therefore
\begin{equation}
 (i\partial_t+\partial_x^2-V)W_\varepsilon^+=0
       \quad\text{in }\mathcal D'(I_\varepsilon\times\R).
 \label{eq:ucp-zero-extension}
\end{equation}
This function is locally \(L^2\) and vanishes on the open set \(I_\varepsilon\times(-\infty,x_0)\). For each integer \(N>|x_0|+1\), apply \cref{lem:open-ucp} on the connected interval \((-N,N)\), with any nonempty open interval to the left of \(x_0\) as the zero set. It follows that \(W_\varepsilon^+=0\) on \(I_\varepsilon\times(-N,N)\). Letting \(N\to\infty\) gives its vanishing on the whole line.

For the left extension \(W_\varepsilon^-=
\ind_{(-\infty,x_0)}Y_\varepsilon\), the boundary distributions have the opposite signs and vanish for the same reason. It is a homogeneous solution vanishing to the right of \(x_0\), so \cref{lem:open-ucp} gives \(W_\varepsilon^-=0\). The two conclusions prove \(Y_\varepsilon=0\) on \(I_\varepsilon\times\R\).

\step{Step 4. Limit of the time regularization.}
Fix a compact interval \(K\Subset I\) and a bounded spatial interval \(J\). Choose a smooth time cutoff equal to one on a neighborhood of \(K\) and supported compactly in \(I\). After multiplication by this cutoff, \(U\) belongs to \(L^2(\R_t\times J)\), and ordinary time mollification converges to it in that space. On \(K\), this mollification agrees with \(Y_\varepsilon\) for all sufficiently small \(\varepsilon\). Thus \(Y_\varepsilon\to U\) in \(L^2(K\times J)\). Since \(Y_\varepsilon=0\), we obtain \(U=0\) there. Exhausting \(I\) and \(\R\) proves the proposition.
\end{proof}

\begin{remark}\label{rem:ucp-scope}
The proposition is an assertion about time-dependent solutions, not just eigenfunctions. It applies in particular to \(V(y)=|m+y|-|m|\), \(V(y)=y\), and \(V(y)=-y\). No global square-integrability or initial-value representation for the solution is required. In \cref{sec:arbitrary-time}, it is applied to the weak limit of the original translated solutions. The function obtained after the temporal Fourier multiplier has a source term and is not used as a homogeneous solution.
\end{remark}

\section{Observability in an arbitrary positive time}\label{sec:arbitrary-time}
We prove the uniform estimate \eqref{eq:uniform-anytime}. The following selection lemma will be used to retain both nonzero mass and small observation after a spatial translation.

\subsection{A selection lemma}
\begin{lemma}\label{lem:weighted-selection}
For each \(j\ge1\), let \((A_{j,m})_{m\in\Z}\), \((B_{j,m})_{m\in\Z}\), and \((D_{j,m})_{m\in\Z}\) be nonnegative families. Suppose that constants \(M_0,M_1,b_0>0\), independent of \(j\), satisfy
\begin{equation}
 \sum_mA_{j,m}\le M_0,\qquad
 B_{j,m}\le M_1A_{j,m},\qquad
 \sum_mB_{j,m}\ge b_0,
 \qquad
 \sum_mD_{j,m}\le d_j,\quad d_j\longrightarrow0.
 \label{eq:selection-assumptions}
\end{equation}
There are \(a_0>0\), numbers \(\varepsilon_j\to0\), and, after omission of finitely many indices, integers \(m_j\) such that
\begin{equation}
 A_{j,m_j}>0,\qquad
 B_{j,m_j}\ge a_0A_{j,m_j},\qquad
 D_{j,m_j}\le\varepsilon_jA_{j,m_j}.
 \label{eq:selection-result}
\end{equation}
One may take \(a_0=b_0/(8M_0)\) and \(\varepsilon_j=\sqrt{d_j}+j^{-1}\).
\end{lemma}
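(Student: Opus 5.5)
The plan is a pigeonhole argument on the index set \(\Z\): we discard the indices where either \(B_{j,m}\) is small relative to \(A_{j,m}\) or \(D_{j,m}\) is large relative to \(A_{j,m}\). We then show that the remaining indices still carry a positive fraction of \(\sum_mB_{j,m}\). Fix \(j\) and put \(a_0=b_0/(8M_0)\) and \(\varepsilon_j=\sqrt{d_j}+j^{-1}\). Split \(\Z\) into three sets:
\[
 \mathcal B_j=\{m:B_{j,m}<a_0A_{j,m}\},\qquad
 \mathcal D_j=\{m:D_{j,m}>\varepsilon_jA_{j,m}\},
\]
and the complement \(\mathcal G_j=\Z\setminus(\mathcal B_j\cup\mathcal D_j)\). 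Any \(m\in\mathcal G_j\) with \(A_{j,m}>0\) satisfies \eqref{eq:selection-result}. So it suffices to show that \(\sum_{m\in\mathcal G_j}B_{j,m}>0\) for all large \(j\). Indeed, \(B_{j,m}\le M_1A_{j,m}\) then forces \(A_{j,m}>0\) at some such \(m\), and we choose that \(m\) as \(m_j\).

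First, the contribution of \(\mathcal B_j\) is small by the total-mass bound:
\[
 \sum_{m\in\mathcal B_j}B_{j,m}\le a_0\sum_mA_{j,m}\le a_0M_0=b_0/8.
\]
Second, the contribution of \(\mathcal D_j\) is small by Chebyshev together with the domination \(B\le M_1A\). On \(\mathcal D_j\) we have \(A_{j,m}<D_{j,m}/\varepsilon_j\). Hence
\[
 \sum_{m\in\mathcal D_j}B_{j,m}\le M_1\sum_{m\in\mathcal D_j}A_{j,m}
 \le \frac{M_1}{\varepsilon_j}\sum_mD_{j,m}\le\frac{M_1d_j}{\varepsilon_j}\le M_1\sqrt{d_j}.
\]
Here we used \(\varepsilon_j\ge\sqrt{d_j}\). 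If \(d_j=0\), the set \(\mathcal D_j\) is empty, because \(\varepsilon_j\ge j^{-1}>0\) forces \(D_{j,m}=0\le\varepsilon_jA_{j,m}\). The term \(j^{-1}\) in \(\varepsilon_j\) exists exactly to keep \(\varepsilon_j\) positive in this case.

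Since \(d_j\to0\), we can omit the finitely many \(j\) with \(M_1\sqrt{d_j}\ge b_0/8\). For the remaining \(j\),
\[
 \sum_{m\in\mathcal G_j}B_{j,m}\ge b_0-\frac{b_0}{8}-\frac{b_0}{8}>0,
\]
which produces \(m_j\) as described above. Also \(\varepsilon_j\to0\), as required. There is no genuine obstacle here. The only point needing care is the Chebyshev step, where the summability of \(A\) must be converted into control of \(B\) through the hypothesis \(B\le M_1A\). The rest is bookkeeping, including the degenerate case \(d_j=0\) and the strict positivity of \(A_{j,m_j}\).
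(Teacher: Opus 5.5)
Your proposal is correct and follows the paper's proof essentially verbatim: the same two exceptional index sets, the bound \(a_0M_0=b_0/8\) on the first, the Chebyshev-type bound \(M_1d_j/\varepsilon_j\le M_1\sqrt{d_j}\) on the second via \(B_{j,m}\le M_1A_{j,m}\), and the choice of an index with \(B_{j,m}>0\) in the complement, which forces \(A_{j,m}>0\). Your separate treatment of \(d_j=0\) is harmless but not really needed, since in that case \(D_{j,m}=0\le\varepsilon_jA_{j,m}\) holds for any \(\varepsilon_j\ge0\).
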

\begin{proof}
For fixed \(j\), divide the integer indices into the two exceptional sets
\[
 \mathcal B_j^{(1)}=\{m:B_{j,m}<a_0A_{j,m}\},\qquad
 \mathcal B_j^{(2)}=\{m:D_{j,m}>\varepsilon_jA_{j,m}\}.
\]
The first set satisfies
\[
 \sum_{m\in\mathcal B_j^{(1)}}B_{j,m}
 \le a_0\sum_mA_{j,m}\le b_0/8.
\]
For the second set, use both inequalities involving \(A_{j,m}\) to obtain
\[
 \sum_{m\in\mathcal B_j^{(2)}}B_{j,m}
 \le\frac{M_1}{\varepsilon_j}\sum_mD_{j,m}
 \le\frac{M_1d_j}{\sqrt{d_j}+j^{-1}}
 \le M_1\sqrt{d_j}\longrightarrow0.
\]
Hence, for all sufficiently large \(j\), the sum of \(B_{j,m}\) over the complement of their union is at least \(b_0/2\). Choose an index in that complement at which \(B_{j,m}>0\). It satisfies the last two inequalities in \eqref{eq:selection-result}, and \(B_{j,m}\le M_1A_{j,m}\) implies \(A_{j,m}>0\). This proves the lemma.
\end{proof}

\subsection{Proof of the uniform observability estimate}
\begin{proof}[Proof of the sufficient assertion in \cref{thm:main}]
Fix \(T>0\), \(L>0\), and \(0<\gamma\le1\). If \(\gamma=1\), the complement of \(E\) has measure zero on each interval \([kL,(k+1)L]\), and hence on the entire line. Unitarity then gives \(\Cobs(E,T)=1/T\). We may therefore suppose \(0<\gamma<1\).

\step{Step 1. The contradiction sequence and its regularity.}
Let \(\mathcal E_{L,\gamma}\) be the class of all \((L,\gamma)\)-thick sets. Failure of \eqref{eq:uniform-anytime} means that
\[
 \inf_{E\in\mathcal E_{L,\gamma}}\
 \inf_{\norm f_2=1}
 \int_0^T\norm{\e^{-itH}f}_{L^2(E)}^2\dd t=0.
\]
Choose measurable sets \(E_j\in\mathcal E_{L,\gamma}\) and unit vectors whose observation integrals tend to zero. These vectors may be approximated in \(L^2\) by elements of \(D(H)\), and the approximations may be normalized. Indeed, if \(h\) approximates a unit vector \(f\) and \(\norm{h-f}_2<1/2\), then
\[
 \left\|\frac h{\norm h_2}-f\right\|_2\le2\norm{h-f}_2,
 \qquad
 \norm{\ind_E\e^{-itH}(h-f)}_{L^2((0,T)\times\R)}
 \le\sqrt T\norm{h-f}_2.
\]
Thus we obtain
\begin{equation}
 \begin{split}
 &f_j\in D(H),\quad \norm{f_j}_2=1,\quad
 u_j(t)=\e^{-itH}f_j,\\
 &\mathcal O_j:=\int_0^T\norm{u_j(t)}_{L^2(E_j)}^2\dd t
       \longrightarrow0.
 \end{split}
 \label{eq:anytime-contradiction}
\end{equation}
There is no uniform bound on \(\norm{Hf_j}_2\), and none will be required.

Choose \(0<t_-<t_+<T\), set \(I_0=(t_-,t_+)\), and fix a real function \(\chi\in C_c^\infty(0,T)\) such that
\begin{equation}
 0\le\chi\le1,\qquad \chi(t)>0\ (t\in I_0),\qquad
 \chi(t)=0\ (t\notin I_0).
 \label{eq:anytime-cutoff}
\end{equation}
Its support is \([t_-,t_+]\). Put \(v_j=\chi u_j\), extended by zero in time. By \cref{lem:graph-fourier}, \(\widehat v_j(\tau)\in D(H)\) for almost every \(\tau\) and
\begin{equation}
 (H+\tau)\widehat v_j(\tau)
       =-i\widehat{\chi'u_j}(\tau).
 \label{eq:anytime-frequency-equation}
\end{equation}
All norms in this identity are well defined for each fixed \(j\).

\step{Step 2. Nonzero mass in the retained region.}
Apply \cref{prop:turning} with \(\lambda=-\tau\) to \(\widehat v_j(\tau)\), and integrate over \(\tau\). The observation term is controlled by \(\mathcal O_j\), because \(\ind_{E_j}\) commutes with the time Fourier transform and \(\norm\chi_\infty\le1\). For the residual, \eqref{eq:anytime-frequency-equation}, Plancherel, and unitarity give
\[
 \int_\R\norm{(H+\tau)\widehat v_j(\tau)}_2^2\dd\tau
  =\int_\R|\chi'(t)|^2\norm{u_j(t)}_2^2\dd t
  =\norm{\chi'}_2^2.
\]
Consequently
\begin{equation}
 \norm\chi_2^2
 \le C_1\mathcal O_j+\frac{C_1}{R}\norm{\chi'}_2^2+3M_j,
 \qquad
 M_j=\iint_{\{|\tau+|x||\le2R\}}
                |\widehat v_j(\tau,x)|^2\dd\tau\dd x.
 \label{eq:turning-spacetime}
\end{equation}
The constant \(C_1\) and the lower threshold for \(R\) depend only on \(L,\gamma\). Choose \(R\ge R_*\) so that
\(C_1\norm{\chi'}_2^2/R\le\norm\chi_2^2/4\), and keep this \(R\) fixed. After discarding finitely many indices, also \(C_1\mathcal O_j\le\norm\chi_2^2/4\). It follows that
\begin{equation}
 M_j\ge c_0:=\frac16\norm\chi_2^2>0.
 \label{eq:positive-turning-mass}
\end{equation}
The order of these choices is important: \(R\) does not depend on \(j\), on the set \(E_j\), or on its spatial location.

\step{Step 3. Temporal frequency cutoffs on unit spatial intervals.}
Let
\[
 Q_m=(m-1/2,m+1/2),\qquad m\in\Z,
\]
and choose a real \(\psi\in C_c^\infty(\R)\) with
\begin{equation}
 0\le\psi\le1,\qquad
 \psi=1\quad\text{on }[-2R-1,2R+1].
 \label{eq:psi-plateau}
\end{equation}
Define
\begin{equation}
 z_{j,m}=\psi(D_t)(\chi(t)\e^{it|m|}u_j(t,x)),
 \qquad
 B_{j,m}=\norm{z_{j,m}}_{L^2(\R_t\times Q_m)}^2.
 \label{eq:local-temporal-functions}
\end{equation}
Multiplication by \(\e^{it|m|}\) has no spatial derivatives. The Fourier shift formula gives
\[
 \mathcal F_t(\chi\e^{it|m|}u_j)(\tau,x)
       =\widehat v_j(\tau-|m|,x).
\]
With \(\sigma=\tau-|m|\), Plancherel on \(Q_m\) therefore yields
\begin{equation}
 B_{j,m}=\int_{Q_m}\int_\R
       |\psi(\sigma+|m|)|^2|\widehat v_j(\sigma,x)|^2\dd\sigma\dd x.
 \label{eq:local-temporal-mass}
\end{equation}
For \(x\in Q_m\), \(\bigl||x|-|m|\bigr|\le1/2\). Hence \(|\sigma+|x||\le2R\) implies \(|\sigma+|m||\le2R+1/2\), where \(\psi=1\). The intervals \(Q_m\) partition \(\R\) up to endpoints, so Tonelli and \eqref{eq:positive-turning-mass} give
\begin{equation}
 \sum_{m\in\Z}B_{j,m}\ge M_j\ge c_0.
 \label{eq:sum-frequency-mass}
\end{equation}

\step{Step 4. A spatial translation and a normalization.}
Set
\begin{equation}
 \rho(y)=(1+|y|)^{-4},\qquad
 C_\rho=\sup_{x\in\R}\sum_{m\in\Z}\rho(x-m)<\infty.
 \label{eq:weight}
\end{equation}
To verify finiteness, choose an integer \(m_0\) with \(|x-m_0|\le1/2\). Then
\[
 \sum_m\rho(x-m)
 \le1+2\sum_{k=1}^\infty(k+1/2)^{-4},
\]
independently of \(x\). Define
\begin{align}
 A_{j,m}&=\int_0^T\int_\R\rho(x-m)|u_j(t,x)|^2\dd x\dd t,\notag\\
 D_{j,m}&=\int_0^T\int_\R
             \rho(x-m)\ind_{E_j}(x)|u_j(t,x)|^2\dd x\dd t.
 \label{eq:weighted-masses}
\end{align}
Tonelli's theorem and \(\norm{u_j(t)}_2=1\) show that
\begin{equation}
 \sum_mA_{j,m}\le C_\rho T,
 \qquad \sum_mD_{j,m}\le C_\rho\mathcal O_j.
 \label{eq:weighted-sums}
\end{equation}
The multiplier in \eqref{eq:local-temporal-functions} is a contraction on \(L^2\) in time. Thus
\[
 B_{j,m}\le\int_0^T\int_{Q_m}|\chi(t)u_j(t,x)|^2\dd x\dd t
 \le C_2A_{j,m},\qquad C_2=(3/2)^4,
\]
since \(\rho(x-m)\ge(3/2)^{-4}\) on \(Q_m\).

Apply \cref{lem:weighted-selection} with \(M_0=C_\rho T\), \(M_1=C_2\), \(b_0=c_0\), and \(d_j=C_\rho\mathcal O_j\). It provides integers \(m_j\), a fixed \(a_0>0\), and \(\varepsilon_j\to0\) for which
\begin{equation}
 B_{j,m_j}\ge a_0A_{j,m_j}>0,
 \qquad D_{j,m_j}\le\varepsilon_jA_{j,m_j}.
 \label{eq:selected-masses}
\end{equation}
Put
\begin{equation}
 \begin{split}
 c_j&=A_{j,m_j}^{1/2}>0,\\
 U_j(t,y)&=c_j^{-1}\e^{it|m_j|}u_j(t,m_j+y),
 \qquad E'_j=E_j-m_j.
 \end{split}
 \label{eq:normalized-translates}
\end{equation}
A change of variables gives exactly
\begin{equation}
 \int_0^T\int_\R\rho(y)|U_j|^2\dd y\dd t=1,
 \qquad
 \int_0^T\int_\R\rho(y)\ind_{E'_j}(y)|U_j|^2\dd y\dd t
      \le\varepsilon_j.
 \label{eq:normalization-identities}
\end{equation}
In particular, for every bounded interval \(J\), there is \(C_J<\infty\) with
\begin{equation}
 \norm{U_j}_{L^2((0,T)\times J)}^2\le C_J,
 \qquad
 \int_0^T\int_{E'_j\cap J}|U_j|^2\dd y\dd t\le C_J\varepsilon_j.
 \label{eq:normalized-local-bounds}
\end{equation}
These inequalities do not require \(c_j\) to be bounded below. They are the only normalization bounds used below.

\step{Step 5. The weak limit and its equation.}
By weak compactness in the single Hilbert space appearing in \eqref{eq:normalization-identities}, pass to a subsequence such that
\begin{equation}
 U_j\rightharpoonup U
  \quad\text{in }\mathcal H_\rho
  :=L^2((0,T)\times\R,\rho(y)\dd t\dd y).
 \label{eq:weighted-weak-limit}
\end{equation}
Restriction to \((0,T)\times J\) is a bounded operator from \(\mathcal H_\rho\) to ordinary \(L^2\) whenever \(J\) is bounded. Thus the weak convergence holds on the whole time interval over every such \(J\).

The translated equation is
\begin{equation}
 i\partial_tU_j=-\partial_y^2U_j+V_j(y)U_j,
 \qquad V_j(y)=|m_j+y|-|m_j|,
 \qquad |V_j(y)|\le|y|.
 \label{eq:translated-potential}
\end{equation}
An integer sequence has a subsequence that is constant, tends to \(+\infty\), or tends to \(-\infty\). Accordingly, after a further extraction,
\begin{equation}
 V_j\longrightarrow V_\infty\quad\text{in }L^\infty_{\loc}(\R),
 \qquad
 V_\infty(y)=
 \begin{cases}
 |m_*+y|-|m_*|,&m_j=m_*,\\
 y,&m_j\to+\infty,\\
 -y,&m_j\to-\infty.
 \end{cases}
 \label{eq:limiting-potentials}
\end{equation}
In each unbounded case the indicated equality holds on a fixed compact interval for all sufficiently large \(j\).

For \(\Phi\in C_c^\infty((0,T)\times\R)\), the distributional equation reads
\[
 \iint U_j(-i\partial_t\Phi+\partial_y^2\Phi-V_j\Phi)\dd t\dd y=0.
\]
On the support of \(\Phi\), the difference \((V_j-V_\infty)\Phi\) tends to zero in \(L^2\), while \(U_j\) is uniformly bounded there by \eqref{eq:normalized-local-bounds}. The remaining terms pass to the limit by \eqref{eq:weighted-weak-limit}. Consequently
\begin{equation}
 i\partial_tU=-\partial_y^2U+V_\infty(y)U
       \quad\text{in }\mathcal D'((0,T)\times\R).
 \label{eq:limit-equation}
\end{equation}
Only a local solution is asserted; no global \(L^2\) initial-value representation for \(U\) is used.

\step{Step 6. Strong compactness of the functions after the Fourier multiplier.}
On all real times let
\begin{equation}
 Z_j=\psi(D_t)(\chi U_j),\qquad
 W_j=\psi(D_t)(\chi' U_j).
 \label{eq:filtered-normalized}
\end{equation}
Their inputs are extended by zero outside \((0,T)\). \Cref{lem:temporal-compactness} applies by \eqref{eq:normalization-identities} and \eqref{eq:translated-potential}. In particular,
\begin{equation}
 i\partial_t Z_j=-\partial_y^2 Z_j+V_jZ_j+iW_j,
 \label{eq:filtered-forced-equation}
\end{equation}
and, for every bounded \(J\),
\begin{equation}
 \sup_j\left(
 \norm{Z_j}_{L^2(\R_t;H^2(J))}
 +\norm{\partial_tZ_j}_{L^2(\R\times J)}
 +\norm{W_j}_{L^2(\R\times J)}\right)<\infty.
 \label{eq:filtered-regularity}
\end{equation}
After a diagonal extraction,
\begin{equation}
 Z_j\longrightarrow Z\quad\text{strongly in }L^2_{\loc}(\R^2),
 \qquad Z=\psi(D_t)(\chi U).
 \label{eq:filtered-limit}
\end{equation}
The identity for the limit follows from the bounded map \(\mathcal A_J\) in the proof of \cref{lem:temporal-compactness}. Thus it uses the weak convergence on the full finite time interval in \eqref{eq:weighted-weak-limit}, not an interchange of a nonlocal convolution with a merely interior limit.

\step{Step 7. The limit is not zero.}
From \eqref{eq:local-temporal-functions} and \eqref{eq:normalized-translates},
\[
 Z_j(t,y)=c_j^{-1}z_{j,m_j}(t,m_j+y).
\]
Hence \eqref{eq:selected-masses} yields
\begin{equation}
 \int_\R\int_{Q_0}|Z_j(t,y)|^2\dd y\dd t
 =\frac{B_{j,m_j}}{A_{j,m_j}}\ge a_0.
 \label{eq:normalized-frequency-mass}
\end{equation}
The local strong convergence alone would not suffice to pass this lower bound to the limit, because the time integral is over \(\R\). The tail estimate \eqref{eq:abstract-time-tail} gives an \(S>0\), independent of \(j\), such that
\[
 \int_{|t|>S}\int_{Q_0}|Z_j|^2\dd y\dd t\le a_0/2.
\]
Strong convergence on a rectangle containing \([-S,S]\times\overline{Q_0}\) now shows that
\begin{equation}
 \int_{-S}^S\int_{Q_0}|Z(t,y)|^2\dd y\dd t\ge a_0/2.
 \label{eq:nonzero-limit}
\end{equation}
In particular \(Z\ne0\).

\step{Step 8. The limiting observation weight.}
Every set \(E'_j\) is still \((L,\gamma)\)-thick. Since \(L^1(\R)\) is separable, bounded sequences in \(L^\infty(\R)\) have weak-star convergent subsequences. We may therefore assume
\begin{equation}
 \ind_{E'_j}\rightharpoonup^*\zeta
        \quad\text{in }L^\infty(\R),\qquad 0\le\zeta\le1.
 \label{eq:limit-indicators}
\end{equation}
The inequalities on \(\zeta\) follow by testing against nonnegative \(L^1\) functions. For any real \(x\), testing against \(\ind_{[x,x+L]}\) gives
\begin{equation}
 \int_x^{x+L}\zeta(y)\dd y\ge\gamma L.
 \label{eq:limit-thickness}
\end{equation}
There is no need to choose a new subsequence for different values of \(x\): weak-star convergence already holds against every \(L^1\) test function. In particular,
\[
 S_0=\{y\in(0,L):\zeta(y)>0\}
\]
has positive measure, since \(\int_0^L\zeta\ge\gamma L>0\).

For a bounded interval \(J\), multiplication by \(\ind_{E'_j\cap J}\) commutes with \(\psi(D_t)\). The multiplier is a contraction, and \(\norm\chi_\infty\le1\); thus \eqref{eq:normalized-local-bounds} implies
\begin{equation}
 \int_\R\int_{E'_j\cap J}|Z_j|^2\dd y\dd t
 \le\int_0^T\int_{E'_j\cap J}|\chi U_j|^2\dd y\dd t
 \le C_J\varepsilon_j\longrightarrow0.
 \label{eq:filtered-observation-zero}
\end{equation}
To pass this conclusion through the varying sets, let \(K\) be a bounded time interval and put \(Q=K\times J\). Then
\begin{align*}
 &\int_Q\ind_{E'_j}(y)|Z_j(t,y)|^2\dd t\dd y
       -\int_Q\zeta(y)|Z(t,y)|^2\dd t\dd y\\
 &\quad=\int_Q\ind_{E'_j}(y)(|Z_j|^2-|Z|^2)\dd t\dd y\\
 &\qquad\quad+\int_J(\ind_{E'_j}-\zeta)(y)
                   \left(\int_K|Z(t,y)|^2\dd t\right)\dd y.
\end{align*}
The first term tends to zero because
\[
 \norm{|Z_j|^2-|Z|^2}_{L^1(Q)}
 \le(\norm{Z_j}_{L^2(Q)}+\norm Z_{L^2(Q)})
            \norm{Z_j-Z}_{L^2(Q)}\longrightarrow0.
\]
The second tends to zero by \eqref{eq:limit-indicators}, as its fixed test function belongs to \(L^1(J)\). Using \eqref{eq:filtered-observation-zero} and nonnegativity, and then exhausting the plane, we obtain
\begin{equation}
 \zeta(y)|Z(t,y)|^2=0\quad\text{for almost every }(t,y)\in\R^2.
 \label{eq:zero-weighted-limit}
\end{equation}
In particular, \(Z(\cdot,y)=0\) for almost every \(y\in S_0\).

\step{Step 9. Removal of the temporal multiplier on the zero set.}
By \eqref{eq:weighted-weak-limit} and Fubini's theorem, for almost every \(y\in(0,L)\) the function
\[
 g_y(t)=\chi(t)U(t,y),
\]
extended by zero, belongs to \(L^2(\R_t)\) and is supported in \([t_-,t_+]\). The identity \eqref{eq:filtered-limit} holds in \(L^2(\R_t\times J)\) on every bounded \(J\). Taking spatial slices outside a null set therefore gives
\[
 Z(\cdot,y)=\psi(D_t)g_y.
\]
For almost every \(y\in S_0\) this function is zero by \eqref{eq:zero-weighted-limit}. \Cref{lem:compact-time-injectivity} applies, since \(\psi=1\) on the nonempty open interval in \eqref{eq:psi-plateau}. It follows that \(g_y=0\). As \(\chi(t)>0\) for every \(t\in I_0\), we conclude
\begin{equation}
 U=0\quad\text{almost everywhere on }I_0\times S_0.
 \label{eq:unfiltered-zero-set}
\end{equation}
Division by \(\chi\) is used here only to infer an almost-everywhere zero; no uniform lower bound for \(\chi\) near its endpoints is needed.

\step{Step 10. Unique continuation and the contradiction.}
The potential \(V_\infty\) in \eqref{eq:limiting-potentials} is independent of time and belongs to \(L^\infty_{\loc}(\R)\). The function \(U\) belongs to \(L^2_{\loc}\), satisfies the homogeneous equation \eqref{eq:limit-equation}, and vanishes on \(I_0\times S_0\), where \(|S_0|>0\). \Cref{prop:measurable-ucp} therefore gives
\[
 U=0\quad\text{on }I_0\times\R.
\]
Since \(\chi=0\) outside \(I_0\), this implies \(\chi U=0\) on the whole time line after extension by zero. Equation \eqref{eq:filtered-limit} then gives \(Z=0\), contrary to \eqref{eq:nonzero-limit}.

This contradiction proves that the infimum in Step~1 is positive. Its reciprocal is an admissible \(C(T,L,\gamma)\). All parameters fixed before the sequence was translated, including \(\chi,R,\psi,\rho,a_0\), depended only on \(T,L,\gamma\). The estimate is consequently uniform over \(\mathcal E_{L,\gamma}\), as claimed. For a fixed thick set this proves implication (i)\(\Rightarrow\)(ii) of \cref{thm:main}.
\end{proof}

\begin{remark}\label{rem:proof-dependencies}
The equation used in the final unique-continuation argument is \eqref{eq:limit-equation}, not \eqref{eq:filtered-forced-equation}. The latter contains the nonzero term arising from \(\chi'\). Local strong convergence is proved for \(Z_j\), while only weak convergence is used for \(U_j\). The identity \eqref{eq:filtered-limit} and the compact-time-support argument in Step~9 relate the two limits. No decay assumption on \(E^c\) occurs in any of these steps.
\end{remark}

\section{Necessity of thickness}\label{sec:necessity}
We now prove that a non-thick set is not observable on any fixed finite time interval. The proof requires an actual sequence whose time observation tends to zero. A failure of a decaying resolvent estimate alone would not suffice.

\subsection{Intervals with small observed measure}
\begin{lemma}\label{lem:holes}
If \(E\subset\R\) is measurable and not thick, then for every integer \(k\ge1\) there is \(z_k\in\R\) such that
\begin{equation}
 |E\cap(z_k-k,z_k+k)|<\frac1k.
 \label{eq:holes}
\end{equation}
If \(|E|>0\), every sequence satisfying \eqref{eq:holes} has \(|z_k|\to\infty\).
\end{lemma}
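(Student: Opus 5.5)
The plan is to argue by contraposition from \cref{def:thick}, then use a fixed interval of positive measure to force the holes far away.

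\emph{Existence of the holes.} Fix an integer \(k\ge1\). I will show that the negation of \eqref{eq:holes} for this \(k\) makes \(E\) thick. Suppose that for every \(z\in\R\) one has \(|E\cap(z-k,z+k)|\ge1/k\). Take \(L=2k\) and \(\gamma=1/(2k^2)\), so that \(\gamma\le1\). Every closed interval \([x,x+L]\) contains the open interval \((z-k,z+k)\) with \(z=x+k\). Hence
\[
 |E\cap[x,x+L]|\ge\frac1k=\gamma L ,
\]
and \(E\) is \((L,\gamma)\)-thick. This contradicts the hypothesis. So for each \(k\) some \(z_k\) satisfies \eqref{eq:holes}; the choice of \(z_k\) may be arbitrary among such points.

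\emph{Escape to infinity.} Assume \(|E|>0\) and let \((z_k)\) be any sequence satisfying \eqref{eq:holes}. I will argue by contradiction. If \(|z_k|\not\to\infty\), there are \(N>0\) and a subsequence \(k_\ell\to\infty\) with \(|z_{k_\ell}|\le N\). Since \(|E|>0\), countable additivity gives a bounded interval \([-N',N']\) with \(\delta:=|E\cap[-N',N']|>0\). For \(\ell\) large we have \(k_\ell>N+N'\), so
\[
 [-N',N']\subset(z_{k_\ell}-k_\ell,\,z_{k_\ell}+k_\ell).
\]
Then \eqref{eq:holes} yields \(\delta\le|E\cap(z_{k_\ell}-k_\ell,z_{k_\ell}+k_\ell)|<1/k_\ell\). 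This is impossible once \(1/k_\ell<\delta\). Therefore \(|z_k|\to\infty\).

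Both steps are elementary. The only point requiring care is the quantifier order in the first step: the parameters \((L,\gamma)\) depend on the fixed \(k\). That is harmless, because thickness only asks for \emph{some} pair \((L,\gamma)\). The open intervals in \eqref{eq:holes} versus the closed intervals in \eqref{eq:thick} make no difference, since endpoints have measure zero.
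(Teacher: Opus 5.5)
Your proposal is correct and follows essentially the same route as the paper. In the first step you make the paper's ``positive infimum gives a thickness proportion'' explicit with the pair \((2k,1/(2k^2))\). In the second step, a bounded subsequence of centers eventually swallows a fixed interval on which \(E\) has positive measure. You fix one such interval, while the paper lets \(R\) be arbitrary and concludes \(|E|=0\); the difference is cosmetic.
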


\begin{proof}
For each fixed interval length \(2k\), non-thickness implies
\[
 \inf_{x\in\R}|E\cap(x,x+2k)|=0.
\]
Indeed, a positive infimum would provide a thickness proportion at that length. This proves \eqref{eq:holes}. Suppose now that \(|E|>0\) and a subsequence of the centers is bounded. Fix a compact interval \([-R,R]\). Along that subsequence, \(k\to\infty\), so \([-R,R]\subset(z_k-k,z_k+k)\) for all sufficiently large \(k\). It follows that \(|E\cap[-R,R]|=0\). As \(R\) is arbitrary, \(|E|=0\), a contradiction.
\end{proof}

Reflection leaves the equation invariant. More precisely, if \((\mathcal Rf)(x)=f(-x)\), then \(\mathcal R\) is unitary and commutes with \(H\), as follows from the form. Hence observability from \(E\) is equivalent to observability from \(-E\). After passing to a subsequence in \cref{lem:holes} and, if needed, reflecting the problem, we may assume that
\begin{equation}
 x_j\longrightarrow+\infty,\qquad R_j\longrightarrow\infty,
 \qquad |E\cap(x_j-R_j,x_j+R_j)|<R_j^{-1}.
 \label{eq:positive-holes}
\end{equation}
Here \(R_j\) denotes the corresponding subsequence of integer radii. No relation between the rates of growth of \(R_j\) and \(x_j\) is required.

\subsection{Translation and the affine limiting equation}
Let \(\mathcal T_x:L^2(\R_y)\to L^2(\R_x)\) be translation,
\[
 (\mathcal T_xf)(z)=f(z-x).
\]
For \(x_j>0\), define the self-adjoint operator
\begin{equation}
 H_j=\mathcal T_{x_j}^*H\mathcal T_{x_j}-x_j.
 \label{eq:Hj}
\end{equation}
In differential notation it is
\[
 H_j=-\frac{d^2}{dy^2}+|x_j+y|-x_j.
\]
It is self-adjoint by unitary conjugation and subtraction of a real constant. On test functions, and on the Gaussian functions used below,
\begin{equation}
 H_j=-\frac{d^2}{dy^2}+y+W_j(y),
 \qquad
 W_j(y)=
 \begin{cases}
 0,&y\ge-x_j,\\
 2(-x_j-y),&y<-x_j.
 \end{cases}
 \label{eq:Wj}
\end{equation}
In particular,
\begin{equation}
 0\le W_j(y)\le2|y|\ind_{\{y<-x_j\}}.
 \label{eq:Wj-bound}
\end{equation}
The unitary group identity is
\begin{equation}
 \e^{-itH}\mathcal T_{x_j}f
 =\e^{-itx_j}\mathcal T_{x_j}\e^{-itH_j}f.
 \label{eq:group-translation}
\end{equation}
The scalar phase on the right does not change any observation norm.

We next solve the comparison equation explicitly. This avoids the need to invoke a convergence theorem for a sequence of unbounded potentials.

\begin{lemma}\label{lem:affine}
Let \(\phi(y)=\pi^{-1/4}\e^{-y^2/2}\), so that \(\norm\phi_2=1\), and put
\begin{equation}
 z(t,y)=\pi^{-1/4}(1+2it)^{-1/2}
              \exp\left(-\frac{y^2}{2(1+2it)}\right),
 \label{eq:free-gaussian}
\end{equation}
where the square root is the continuous branch equal to one at \(t=0\). Then
\begin{equation}
 v(t,y)=\e^{-i(ty+t^3/3)}z(t,y+t^2)
 \label{eq:affine-gaussian}
\end{equation}
satisfies
\begin{equation}
 i\partial_tv=-\partial_y^2v+yv,\qquad v(0)=\phi.
 \label{eq:affine-equation}
\end{equation}
For each fixed \(T>0\), there are \(A_T,c_T>0\) such that
\begin{equation}
 |v(t,y)|^2\le A_T\e^{-c_Ty^2},
 \qquad 0\le t\le T,\quad y\in\R.
 \label{eq:gaussian-tail}
\end{equation}
In addition, \(v\) and its time derivative are continuous in the Schwartz topology on each compact time interval.
\end{lemma}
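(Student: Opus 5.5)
The plan is a direct verification in three parts: the equation and initial datum, the Gaussian tail bound, and the Schwartz-topology regularity.

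\emph{The equation.} First check that \(z\) is the free Gaussian solution, i.e. \(i\partial_tz=-\partial_y^2z\) with \(z(0)=\phi\). Put \(s=1+2it\), so \(\partial_ts=2i\). Write \(z=\pi^{-1/4}s^{-1/2}\e^{-y^2/(2s)}\). Then
\[
\partial_yz=-\frac ys z,\qquad \partial_y^2z=\Bigl(\frac{y^2}{s^2}-\frac1s\Bigr)z,
\]
while
\[
\partial_tz=2i\Bigl(-\frac1{2s}+\frac{y^2}{2s^2}\Bigr)z=i\,\partial_y^2z.
\]
Hence \(i\partial_tz=-\partial_y^2z\). The continuous branch of \(s^{-1/2}\) is well defined because \(s\) stays in the right half-plane.

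Next apply the Avron--Herbst gauge. Set \(\theta(t,y)=-(ty+t^3/3)\) and \(w(t,y)=z(t,y+t^2)\), so that \(v=\e^{i\theta}w\). The derivatives are:
\begin{itemize}
\item \(\partial_t w=(\partial_tz)(t,y+t^2)+2t\,(\partial_yz)(t,y+t^2)\);
\item \(\partial_t\theta=-y-t^2\) and \(\partial_y\theta=-t\).
\end{itemize}
This gives
\[
i\partial_tv=\e^{i\theta}\bigl[(y+t^2)w+i\partial_tw\bigr],
\qquad
\partial_y^2v=\e^{i\theta}\bigl[\partial_y^2w-2it\,\partial_yw-t^2w\bigr].
\]
Compute \(i\partial_tv+\partial_y^2v-yv\). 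The \(t^2w\) terms cancel, and the \(2it\,\partial_yw\) terms cancel. What remains is \(\e^{i\theta}\bigl[i\partial_tz+\partial_y^2z\bigr](t,y+t^2)=0\). At \(t=0\) we have \(v(0)=z(0)=\phi\), which proves \eqref{eq:affine-equation}.

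\emph{The Gaussian tail.} Since the phase is unimodular,
\[
|v(t,y)|^2=\pi^{-1/2}(1+4t^2)^{-1/2}\exp\Bigl(-\frac{(y+t^2)^2}{1+4t^2}\Bigr),
\]
using \(\operatorname{Re}(1/s)=1/(1+4t^2)\). For \(0\le t\le T\), apply \((y+t^2)^2\ge y^2/2-t^4\ge y^2/2-T^4\). This yields \eqref{eq:gaussian-tail} with
\[
c_T=\frac1{2(1+4T^2)},\qquad A_T=\pi^{-1/2}\e^{T^4}.
\]

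\emph{Schwartz regularity.} Write
\[
v(t,y)=\pi^{-1/4}s^{-1/2}\exp\bigl(-\alpha(t)y^2+\beta(t)y+\gamma(t)\bigr),
\]
where \(\alpha=1/(2s)\) and \(\beta,\gamma\) are polynomial-rational in \(t\), smooth on \([0,T]\). We have \(\operatorname{Re}\alpha(t)\ge c>0\) uniformly there. Any derivative \(\partial_t^k\partial_y^l\) of \(v\) equals a polynomial in \(y\), with coefficients continuous in \(t\), times \(v\) itself. Combined with the uniform Gaussian bound, this gives
\[
\sup_{t\in[0,T]}\sup_y|y|^p\,|\partial_t^k\partial_y^lv|<\infty.
\]
The difference \(v(t+h)-v(t)\) is controlled in every seminorm by the mean value theorem applied to \(\partial_tv\), and similarly for \(\partial_tv\) using \(\partial_t^2v\). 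This gives continuity of \(v\) and \(\partial_tv\) in the Schwartz topology.

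The main obstacle is only bookkeeping: the sign conventions in the gauge computation, where the cross terms must cancel exactly, and the choice of branch of \(s^{-1/2}\). Neither involves a genuine analytic difficulty.
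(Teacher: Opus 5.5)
Your proposal is correct and follows the paper's proof essentially step by step. You verify directly that $z$ solves the free equation, use the same gauge substitution $y\mapsto y+t^2$ with phase $\e^{-i(ty+t^3/3)}$ so that the cross terms cancel, derive the same exact formula for $|v|^2$ with the same constants $A_T=\pi^{-1/2}\e^{T^4}$ and $c_T=1/(2(1+4T^2))$, and use the same polynomial-times-Gaussian argument for Schwartz continuity, where your mean-value step in each seminorm just makes explicit what the paper leaves implicit.
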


\begin{proof}
The Fourier transform of the initial Gaussian, or direct differentiation of \eqref{eq:free-gaussian}, gives \(i\partial_tz=-\partial_y^2z\) and \(z(0)=\phi\). Write \(Y=y+t^2\) and \(\Theta=ty+t^3/3\). Then
\[
 \begin{aligned}
 i\partial_tv
 &=\e^{-i\Theta}\bigl((y+t^2)z+i\partial_tz+2it\partial_Yz\bigr),\\
 -\partial_y^2v+yv
 &=\e^{-i\Theta}\bigl((t^2+y)z+2it\partial_Yz-\partial_Y^2z\bigr),
 \end{aligned}
\]
where each expression involving \(z\) is evaluated at \((t,Y)\). These agree by the free equation, proving \eqref{eq:affine-equation}.

Taking the absolute square gives
\begin{equation}
 |v(t,y)|^2
 =\pi^{-1/2}(1+4t^2)^{-1/2}
       \exp\left(-\frac{(y+t^2)^2}{1+4t^2}\right).
 \label{eq:gaussian-exact}
\end{equation}
Since \((y+t^2)^2\ge y^2/2-t^4\), we may take, for example,
\[
 A_T=\pi^{-1/2}\e^{T^4},\qquad c_T=\frac1{2(1+4T^2)}
\]
in \eqref{eq:gaussian-tail}. Each derivative of \eqref{eq:affine-gaussian} is a polynomial in \(y\) times a Gaussian whose real quadratic decay remains positive for \(t\) in a compact interval. This proves the stated Schwartz continuity, including that of the time derivative.
\end{proof}

\subsection{Uniform comparison on a fixed time interval}
\begin{lemma}\label{lem:duhamel-comparison}
With \(H_j\) and \(v\) as above, let
\[
 w_j(t)=\e^{-itH_j}\phi.
\]
For every fixed \(T>0\),
\begin{equation}
 \sup_{0\le t\le T}\norm{w_j(t)-v(t)}_2\longrightarrow0
 \qquad(j\to\infty).
 \label{eq:comparison}
\end{equation}
\end{lemma}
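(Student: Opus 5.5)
We will use Duhamel's formula for the difference \(r_j(t)=w_j(t)-v(t)\), treating \(v\) as an approximate solution of the equation generated by \(H_j\). By \cref{lem:affine}, \(v(t)\) is a Schwartz function for each \(t\), and \(v\), \(\partial_tv\) are continuous in the Schwartz topology on \([0,T]\). We first check that \(v(t)\in D(H_j)\), and that \(H_jv(t)\) is given by the differential expression \(-\partial_y^2v+yv+W_jv\). For this we conjugate back by \(\mathcal T_{x_j}\). The function \(\mathcal T_{x_j}v(t)\) is Schwartz, hence lies in \(\QH\), and \(-u''+|x|u\in L^2\) for it. The converse part of \cref{lem:form} then gives membership in \(D(H)\). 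Consequently \(H_jv(t)=-\partial_y^2v+(|x_j+y|-x_j)v=-\partial_y^2v+yv+W_jv\), by \eqref{eq:Wj}. Combining with \eqref{eq:affine-equation}, we obtain
\[
 i\partial_tv=H_jv-W_jv,\qquad v(0)=\phi=w_j(0).
\]

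Next we derive the Duhamel identity. Consider \(g(s)=\e^{isH_j}v(s)\) for \(s\in[0,T]\). Since \(v\) is \(C^1\) in \(L^2\) with values in \(D(H_j)\), and \(s\mapsto H_jv(s)\) is continuous in \(L^2\), the standard product rule for the unitary group applies. This gives \(g'(s)=\e^{isH_j}(iH_jv(s)+\partial_sv(s))=i\e^{isH_j}W_jv(s)\). Here \(W_jv(s)\in L^2\) because of \eqref{eq:Wj-bound} and the Gaussian decay of \(v\). Integrating from \(0\) to \(t\) and applying \(\e^{-itH_j}\) yields
\[
 v(t)=w_j(t)+i\int_0^t\e^{-i(t-s)H_j}W_jv(s)\dd s,
 \qquad\text{so}\qquad
 \norm{w_j(t)-v(t)}_2\le\int_0^T\norm{W_jv(s)}_2\dd s.
\]
The only point needing care here is the continuity of \(s\mapsto W_jv(s)\) in \(L^2\), which is required for the Bochner integral. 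It follows from the Schwartz continuity of \(v\), because \(W_j\) grows at most linearly.

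It remains to bound the forcing. By \eqref{eq:Wj-bound} and \eqref{eq:gaussian-tail}, for \(0\le s\le T\),
\[
 \norm{W_jv(s)}_2^2\le4A_T\int_{-\infty}^{-x_j}y^2\e^{-c_Ty^2}\dd y,
\]
and the right-hand side tends to zero as \(x_j\to+\infty\), uniformly in \(s\). Taking the supremum over \(t\in[0,T]\) then gives \eqref{eq:comparison}, with a rate that depends only on \(x_j\); the radii \(R_j\) play no role. The main obstacle we expect is rigor in the differentiation step, namely verifying that \(v(s)\in D(H_j)\) and that \(g\) is differentiable. We will handle this with \cref{lem:form}'s converse together with the Schwartz-valued \(C^1\) regularity of \(v\), which makes \(H_jv\) continuous in \(s\). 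No convergence theorem for operators is needed.
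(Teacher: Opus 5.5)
Your proposal is correct and follows essentially the same route as the paper's proof. Both verify \(v(t)\in D(H_j)\) via the converse part of \cref{lem:form}, write \(i\partial_tv=H_jv-W_jv\), obtain the Duhamel identity by differentiating the unitary group applied to \(v(s)\), and bound \(\int_0^T\norm{W_jv(s)}_2\dd s\) by the Gaussian tail on \(\{y<-x_j\}\). The only difference is cosmetic: you differentiate \(\e^{isH_j}v(s)\), whereas the paper differentiates \(\e^{-i(t-s)H_j}v(s)\).
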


\begin{proof}
For each fixed \(j\) and \(t\), the Schwartz function \(v(t)\) belongs to the form domain of \(\mathcal T_{x_j}^*H\mathcal T_{x_j}\). Its distributional image under \(-\partial_y^2+|x_j+y|\) belongs to \(L^2\). The converse part of \cref{lem:form}, after translation, therefore gives \(v(t)\in D(H_j)\). Moreover \(v\) is \(C^1\) in \(L^2\) and \(H_jv(t)\) is continuous in \(L^2\) on \([0,T]\). The identity
\[
 i\partial_tv=H_jv-W_jv
\]
can consequently be integrated against the unitary group of \(H_j\). In particular,
\begin{equation}
 w_j(t)-v(t)
 =-i\int_0^t\e^{-i(t-s)H_j}W_jv(s)\dd s.
 \label{eq:duhamel-comparison}
\end{equation}
More explicitly, for fixed \(t\in[0,T]\), the function \(s\mapsto\e^{-i(t-s)H_j}v(s)\) is continuously differentiable as an \(L^2\)-valued function on \([0,t]\). The derivative is
\[
 \frac{\mathrm d}{\mathrm ds}\bigl(\e^{-i(t-s)H_j}v(s)\bigr)
 =\e^{-i(t-s)H_j}\bigl(iH_jv(s)+v'(s)\bigr)
 =i\e^{-i(t-s)H_j}W_jv(s).
\]
To justify the product rule, use \(v(s)\in D(H_j)\), differentiability of the unitary group on its domain, and continuity of both \(v'\) and \(H_jv\) in \(L^2\). Integrating from \(0\) to \(t\) gives
\(v(t)-\e^{-itH_j}\phi=i\int_0^t\e^{-i(t-s)H_j}W_jv(s)\dd s\), which is exactly \eqref{eq:duhamel-comparison}. The integral is a Bochner integral: its integrand is continuous in \(L^2\) for every fixed \(j\). No uniform bound on \(W_j\) as a multiplication operator is being used.

Unitarity gives
\begin{equation}
 \sup_{0\le t\le T}\norm{w_j(t)-v(t)}_2
 \le\int_0^T\norm{W_jv(s)}_2\dd s.
 \label{eq:duhamel-norm}
\end{equation}
Using \eqref{eq:Wj-bound} and \eqref{eq:gaussian-tail},
\[
 \norm{W_jv(s)}_2^2
 \le4A_T\int_{y<-x_j}y^2\e^{-c_Ty^2}\dd y
 \quad(0\le s\le T).
\]
The right side tends to zero because \(x_j\to+\infty\), and is independent of \(s\). Thus the right side of \eqref{eq:duhamel-norm} tends to zero. This proves \eqref{eq:comparison}.
\end{proof}

\subsection{Failure of observability on a non-thick set}
\begin{proof}[Necessary part of \cref{thm:main}]
If \(|E|=0\), the observation integral is zero for every initial state and the assertion is immediate. Suppose \(|E|>0\) and \(E\) is not thick. Fix an arbitrary finite \(T>0\). After the reflection and subsequence choices made above, take \(x_j,R_j\) as in \eqref{eq:positive-holes}. Put
\[
 E_j=E-x_j,\qquad f_j=\mathcal T_{x_j}\phi.
\]
Then \(\norm{f_j}_2=1\) and
\begin{equation}
 |E_j\cap(-R_j,R_j)|<R_j^{-1}.
 \label{eq:translated-hole}
\end{equation}
For the comparison solution \(v\), \eqref{eq:gaussian-tail} yields
\begin{align}
 \int_0^T\int_{E_j}|v(t,y)|^2\dd y\dd t
 &\le A_TT\,|E_j\cap(-R_j,R_j)|
       +A_TT\int_{|y|\ge R_j}\e^{-c_Ty^2}\dd y\notag\\
 &\le\frac{A_TT}{R_j}
       +A_TT\int_{|y|\ge R_j}\e^{-c_Ty^2}\dd y
 \longrightarrow0.
 \label{eq:comparison-observation-small}
\end{align}
The restriction operator to \(E_j\) has norm at most one. Hence
\begin{align}
 \int_0^T\norm{w_j(t)}_{L^2(E_j)}^2\dd t
 &\le2\int_0^T\norm{v(t)}_{L^2(E_j)}^2\dd t
       +2T\sup_{0\le t\le T}\norm{w_j(t)-v(t)}_2^2\notag\\
 &\longrightarrow0
 \label{eq:wj-observation-small}
\end{align}
by \cref{lem:duhamel-comparison}. Finally, \eqref{eq:group-translation} gives
\[
 \norm{\e^{-itH}f_j}_{L^2(E)}
 =\norm{w_j(t)}_{L^2(E_j)}.
\]
Combining this with \eqref{eq:wj-observation-small} proves \eqref{eq:main-failure}. If the chosen centers escaped to minus infinity, the identical conclusion for \(E\) follows by applying this construction to \(-E\) and reflecting the resulting states. Since \(T>0\) was arbitrary, no finite observation time is possible for a non-thick set. Together with \cref{sec:arbitrary-time} and the immediate implication (ii)\(\Rightarrow\)(iii), this completes the proof of \cref{thm:main}.
\end{proof}

\begin{remark}
The centers \(x_j\) need not remain in a classically forbidden region, and the proof imposes no relation between \(x_j\) and \(R_j\). The argument compares actual evolutions of fixed-profile initial data. It does not infer non-observability merely from instantaneous support in a gap, and it does not use finite propagation speed.
\end{remark}

\section{Consequences and examples}\label{sec:consequences}

\subsection{An equivalent fixed-width spectral condition}
We record the converse to the spectral consequence of \cref{prop:resolvent-intro}, in order to identify its relation to \cite[Proposition~4.1]{HWW26}. Such criteria are part of the general theory in~\cite{RTTT05}.

\begin{proof}[Proof of \cref{cor:spectral-intro}]
Let \(P_\lambda=\ind_{[\lambda-\varepsilon,\lambda+\varepsilon]}(H)\). If \(u=P_\lambda u\), the spectral theorem gives \(u\in D(H)\) and
\[
 \norm{(H-\lambda)u}_2^2\le\varepsilon^2\norm u_2^2.
\]
Apply \eqref{eq:main-resolvent} and use \(C_0\varepsilon^2=1/2\). Then
\[
 \norm u_2^2\le\tfrac12\norm u_2^2+C_0\norm u_{L^2(E)}^2,
\]
which proves \eqref{eq:cluster-intro}.
\end{proof}

\begin{proposition}\label{prop:four-equivalences}
For a measurable set \(E\subset\R\), each of the following is equivalent to thickness:
\begin{enumerate}
 \item Observability at some positive time, or equivalently at every positive time, for \(\e^{-itH}\).
 \item Existence of \(C<\infty\) such that
 \[
 \norm u_2^2\le C\bigl(\norm{(H-\lambda)u}_2^2+
                                  \norm u_{L^2(E)}^2\bigr)
 \quad(\lambda\in\R,\ u\in D(H)).
 \]
 \item Existence of \(\varepsilon,c>0\) such that for every \(\lambda\in\R\),
 \begin{equation}
  \norm u_{L^2(E)}^2\ge c\norm u_2^2,
  \qquad u\in\Ran\ind_{[\lambda-\varepsilon,\lambda+\varepsilon]}(H).
  \label{eq:abstract-cluster}
 \end{equation}
\end{enumerate}
\end{proposition}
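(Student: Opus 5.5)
The proof is a cycle of implications with thickness as the base point. Item (1) is equivalent to thickness by \cref{thm:main}. The implication thickness \(\Rightarrow\) (2) is \cref{prop:resolvent-intro} with \(C=C_0(L,\gamma)\). The implication (2) \(\Rightarrow\) (3) is the argument in the proof of \cref{cor:spectral-intro}: take \(\varepsilon=(2C)^{-1/2}\). For \(u\) in the spectral range we have \(\norm{(H-\lambda)u}_2^2\le\varepsilon^2\norm u_2^2\), so absorbing gives \(c=1/(2C)\). It remains to show (3) \(\Rightarrow\) thickness.

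To prove (3) \(\Rightarrow\) thickness we argue by contraposition and reuse the construction of \cref{sec:necessity}. Suppose \(E\) is not thick. If \(|E|=0\), item (3) fails trivially, since the range of a spectral projection containing an eigenvalue is nonzero (\cref{lem:compact}). Otherwise take \(x_j\to+\infty\) and \(R_j\to\infty\) as in \eqref{eq:positive-holes}, reflecting if necessary.

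We need unit vectors \(u_j\) in the ranges of \(\ind_{[\lambda_j-\varepsilon,\lambda_j+\varepsilon]}(H)\) with small mass on \(E\). The natural choice is a spectrally localized translated Gaussian. Let \(f_j=\mathcal T_{x_j}\phi\) and set
\[
 \lambda_j=x_j+\ip{H_j\phi}{\phi}=x_j+\ip{(-\partial_y^2+y)\phi}{\phi}+o(1)=x_j+\tfrac12+o(1),
\]
using \eqref{eq:Wj}. The variance \(\norm{(H_j-\mu)\phi}_2\) is bounded but not small, so a Gaussian alone does not lie in a window of width \(2\varepsilon\). Instead we project: put \(u_j=P_j f_j\) with \(P_j=\ind_{[\lambda-\varepsilon,\lambda+\varepsilon]}(H)\). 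We choose the window centre \(\lambda\) among finitely many points covering \([\lambda_j-K,\lambda_j+K]\), with \(K\) fixed, so that \(\norm{P_jf_j}_2\ge c_K>0\). Chebyshev's inequality gives that the spectral measure of \(f_j\) has mass at least \(1/2\) in \([\lambda_j-K,\lambda_j+K]\), since \(\norm{(H-\lambda_j)f_j}_2\) is bounded uniformly in \(j\). Pigeonholing over the \(O(K/\varepsilon)\) windows then gives \(\norm{P_jf_j}_2^2\ge \varepsilon/(4K)\) for some window.

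The main obstacle is to show that \(\norm{P_jf_j}_{L^2(E)}\to0\): the projection destroys the spatial localization, and \(P_jf_j\) could spread onto \(E\). The first attempt is to use the time picture. Write \(P_j\) approximately as \(\int\hat\theta(t)\e^{-it(H-\lambda)}\dd t\) with a Schwartz \(\theta\ge\ind_{[-\varepsilon,\varepsilon]}\). We replace \(P_j\) by \(\theta(H-\lambda)\), whose range is not exactly spectral. Alternatively, and more cleanly, we could derive the contradiction from (3) through a Hilbert-space inequality: (3) implies \(\norm{\theta(H-\lambda)g}_{L^2(E)}\) controls \(\norm{P g}\), up to tails of \(\theta\) which are handled by summing over windows with a partition of unity in \(\lambda\). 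Then
\[
 \theta(H-\lambda)f_j=\int\hat\theta(t)\e^{it\lambda}\e^{-itH}f_j\dd t .
\]
Its \(E\)-norm is bounded by \(\int|\hat\theta(t)|\norm{\e^{-itH}f_j}_{L^2(E)}\dd t\). For \(|t|\le T\) this tends to zero by \eqref{eq:wj-observation-small}, applied on \([-T,T]\) using time reversal. The remainder \(|t|>T\) is small uniformly in \(j\) by the decay of \(\hat\theta\).

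Combining these, a partition of unity \(\sum_k\theta_k^2(H)=1\) with \(\theta_k\) smooth and supported in windows of width \(2\varepsilon\) lets (3) give
\[
 \norm{f_j}_2^2\le c^{-1}\sum_k\norm{\theta_k(H)f_j}_{L^2(E)}^2
\]
after a commutator-free estimate. The same spectral identity applied to the time-averaged observation shows the right side tends to zero. This contradicts \(\norm{f_j}_2=1\), so \(E\) must be thick. The delicate point I expect is passing (3), which is stated only for exact spectral ranges, to the smooth functional calculus \(\theta_k(H)\). I would handle this via the \cite{RTTT05}-type argument that (3) implies observability at some time \(T_0\) (the wave-packet/Ingham route), and then invoke the necessity part of \cref{thm:main}.
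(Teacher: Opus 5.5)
Your reductions thickness \(\Rightarrow\) (2) \(\Rightarrow\) (3) are correct, and so is the equivalence of (1) with thickness. The one substantive implication, (3) \(\Rightarrow\) thickness, is not actually established.

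In your direct route, the smooth functional calculus does give \(\norm{\theta(H-\mu)f_j}_{L^2(E)}\to0\) uniformly in the centre \(\mu\). Condition (3) does give \(\norm{f_j}_2^2\le c^{-1}\sum_k\norm{\theta_k(H)f_j}_{L^2(E)}^2\). But the sum runs over infinitely many windows. Saying that ``the same spectral identity applied to the time-averaged observation'' makes it small does not deal with that.

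The repair is the Chebyshev bound you used in the projection paragraph and then dropped. Since \(\norm{(H-x_j)f_j}_2=\norm{H_j\phi}_2\) is bounded in \(j\), the windows not meeting \([x_j-K,x_j+K]\) contribute only \(O(K^{-2})\) to \(\norm{f_j}_2^2\). Only \(O(K/\varepsilon)\) windows remain, each uniformly small.

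The point you flag as delicate is not an issue. If \(\supp\theta_k\subset[\lambda_k-\varepsilon,\lambda_k+\varepsilon]\), then \(\theta_k(H)f\in\Ran\ind_{[\lambda_k-\varepsilon,\lambda_k+\varepsilon]}(H)\). So (3) applies to it verbatim, and no commutator estimate is needed. Your final fallback, citing \cite{RTTT05} for (3) \(\Rightarrow\) observability, simply imports the implication that has to be proved.

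The missing idea is that (3) gives the resolvent inequality (2) by an orthogonal splitting, after which everything is already in the paper. For \(u\in D(H)\) write \(u=v+w\) with \(v=P_\lambda u\), \(w=(I-P_\lambda)u\), and \(P_\lambda=\ind_{[\lambda-\varepsilon,\lambda+\varepsilon]}(H)\). The spectral theorem gives \(\norm w_2\le\varepsilon^{-1}\norm{(H-\lambda)u}_2\). Condition (3) gives \(\norm v_2^2\le c^{-1}\norm v_{L^2(E)}^2\le2c^{-1}\bigl(\norm u_{L^2(E)}^2+\norm w_2^2\bigr)\). Orthogonality of \(v\) and \(w\) then yields
\[
 \norm u_2^2\le2c^{-1}\norm u_{L^2(E)}^2+(1+2c^{-1})\varepsilon^{-2}\norm{(H-\lambda)u}_2^2 .
\]
Next, \cref{prop:abstract-time} turns (2) into observability at every \(T>\pi\sqrt C\). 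The necessity part of \cref{thm:main} then gives thickness.

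This is the paper's proof. It needs no Gaussian wave packets, spectral projections of translated data, or partitions of unity, because the non-thick construction is used only once, inside \cref{thm:main}. Your repaired direct route would also work, but it reproves in a longer way what the three-line splitting and the appendix already give.
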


\begin{proof}
The equivalence between thickness and (i) is \cref{thm:main}. Thickness implies (ii) by \cref{prop:resolvent-intro}, and (ii) implies (i) by \cref{prop:abstract-time}. The same abstract proposition also gives (i)\(\Rightarrow\)(ii) directly, with constants depending on an observation time and its observation constant. Condition (ii) implies (iii) by the proof of \cref{cor:spectral-intro}.

For the remaining implication, assume (iii). Fix \(\lambda\in\R\) and \(u\in D(H)\), and decompose
\[
 u=v+w,\qquad v=P_\lambda u,\qquad w=(I-P_\lambda)u,
 \quad P_\lambda=\ind_{[\lambda-\varepsilon,\lambda+\varepsilon]}(H).
\]
The vectors \(v\) and \(w\) are orthogonal in \(L^2(\R)\). Also,
\[
 \norm w_2\le\varepsilon^{-1}\norm{(H-\lambda)u}_2.
\]
By \eqref{eq:abstract-cluster},
\[
 \norm v_2^2
 \le c^{-1}\norm v_{L^2(E)}^2
 \le2c^{-1}\norm u_{L^2(E)}^2+2c^{-1}\norm w_2^2.
\]
Consequently,
\begin{equation}
 \norm u_2^2
 \le2c^{-1}\norm u_{L^2(E)}^2
 +(1+2c^{-1})\varepsilon^{-2}\norm{(H-\lambda)u}_2^2,
 \label{eq:cluster-to-res}
\end{equation}
which is (ii) after enlarging the constant.
\end{proof}

By \cref{lem:compact}, choose an orthonormal eigenbasis \((\phi_j)\) with eigenvalues \((\lambda_j)\), listed with multiplicities. For
\[
 J_\lambda=\{j:|\lambda_j-\lambda|\le\varepsilon\},
\]
condition~\eqref{eq:cluster-intro} means
\begin{equation}
 \int_E\left|\sum_{j\in J_\lambda}c_j\phi_j(x)\right|^2\dd x
 \ge\frac1{2C_0}\sum_{j\in J_\lambda}|c_j|^2.
 \label{eq:gram}
\end{equation}
Thus the Gram matrices have a positive lower bound independent of the center of the spectral interval. The assertion controls all linear combinations in each cluster, not only individual eigenfunctions. The corresponding half-line example in~\cite{HWW26} shows why a statement about individual eigenfunctions is not enough for this operator.

\subsection{Periodic sets, thin complements and finite complements}
The set \(E_{\mathrm{per}}\) in \eqref{eq:periodic-intro} is \((1,1/2)\)-thick, since its indicator is one-periodic and has integral \(1/2\) over a period. It is therefore observable at every positive time by \cref{thm:main}. Its complement is not \(\alpha\)-thin for any positive \(\alpha\). This example, used in \cite[Remark~3.3]{HWW26} to separate the geometric classes, is included without an additional hypothesis at infinity.

\begin{corollary}\label{cor:thin-consequence}
If \(E^c\) satisfies \eqref{eq:thin} for some \(\alpha>0\), then \(E\) is observable at every positive time. The same conclusion holds if
\begin{equation}
 \lim_{R\to\infty}\sup_{|x|\ge R}|E^c\cap[x,x+1]|=0,
 \label{eq:vanishing-complement}
\end{equation}
or if \(|E^c|<\infty\).
\end{corollary}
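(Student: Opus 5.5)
The plan is to reduce each of the three hypotheses to thickness of \(E\), and then invoke the sufficient assertion of \cref{thm:main} (implication (i)\(\Rightarrow\)(ii)). No new analysis is needed: it suffices to exhibit a pair \((L,\gamma)\) for which \eqref{eq:thick} holds.

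\textbf{First, \eqref{eq:vanishing-complement} implies thickness.} I take \(L=1\) and fix \(R\) so large that \(|E^c\cap[x,x+1]|\le1/2\) for \(|x|\ge R\). The window \([-R-1,R+1]\) is compact, and there I need that \(x\mapsto|E\cap[x,x+1]|\) is continuous (it is \(1\)-Lipschitz) and strictly positive. Here I hit the main subtlety: a window inside the compact region might miss \(E\) entirely. For instance, \(E^c\) could contain a bounded interval of length greater than one, and \eqref{eq:vanishing-complement} says nothing about it.

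The fix is to enlarge the window length rather than use \(L=1\). I choose an integer \(L\ge 2R+4\) and consider an arbitrary interval \([x,x+L]\). Since \(L\ge2R+4\), it contains a unit interval \([y,y+1]\) with \(|y|\ge R\). Concretely, if \(x+L\ge R+1\) I take \(y=\max\{x,R\}\); otherwise \(x+L<R+1\) forces \(x<-R-3\), and I take \(y=x\). On that unit interval,
\[
 |E\cap[x,x+L]|\ge|E\cap[y,y+1]|\ge 1/2 .
\]
So \(E\) is \((L,1/(2L))\)-thick.

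\textbf{Second, \eqref{eq:thin} with \(\alpha>0\) implies \eqref{eq:vanishing-complement}.} For \(|x|\ge R_0\) we have \(|E^c\cap[x,x+1]|\le M|x|^{-\alpha}\to0\), so the first case applies.

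\textbf{Third, \(|E^c|<\infty\) implies \eqref{eq:vanishing-complement}.} By dominated (or monotone) convergence, \(|E^c\setminus[-R+1,R-1]|\to0\) as \(R\to\infty\). Every window \([x,x+1]\) with \(|x|\ge R\) lies outside \((-R+1,R-1)\), so its complement-measure is bounded by this tail, and again the first case applies.

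The only delicate point is the choice of a large \(L\) in the first case. This is what handles bounded gaps of \(E\), which are invisible in the hypothesis at infinity. Once thickness is established, \cref{thm:main} gives observability at every \(T>0\).
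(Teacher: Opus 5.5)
Your argument is correct and takes the same route as the paper: reduce each hypothesis to thickness using a window long enough to reach the region $|x|\ge R$, then invoke the sufficient part of \cref{thm:main}. The differences are minor and do not affect the conclusion, since only some thickness pair is needed. You extract a single far-out unit interval and obtain $(L,1/(2L))$-thickness, whereas the paper sums disjoint unit intervals outside $[-R,R]$ to get the fixed proportion $1/4$; and you route the case $|E^c|<\infty$ through \eqref{eq:vanishing-complement}, whereas the paper chooses $L=2|E^c|+1$ directly and gets $\gamma=1/2$.
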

\begin{proof}
Condition \eqref{eq:thin} implies \eqref{eq:vanishing-complement}. Under \eqref{eq:vanishing-complement}, choose \(R\ge1\) so that every unit interval contained in either \((R,\infty)\) or \((-\infty,-R)\) has observed measure at least \(1/2\). In an arbitrary interval of length \(L=4R+8\), the part outside \([-R,R]\) has length at least \(L-2R\) and at most two components. Each component contains disjoint unit intervals with total length at least its length minus one. Summing their observed measures gives
\[
 |E\cap[x,x+L]|\ge\tfrac12(L-2R-2)\ge L/4.
\]
Thus \(E\) is thick. If \(|F|<\infty\), where \(F=E^c\), choose \(L=2|F|+1\); then
\[
 |E\cap[x,x+L]|\ge L-|F|\ge L/2
\]
for every \(x\). In each case apply \cref{thm:main}.
\end{proof}

The preceding corollary is qualitative. It does not supply an explicit estimate of \(\Cobs(E,T)\) as \(T\downarrow0\) from \(\alpha,M,R_0\), nor does it assert that a positive thinness exponent is necessary.

\begin{example}\label{ex:finite-complement}
Let
\begin{equation}
 F=\bigcup_{j=2}^\infty(\e^j,\e^j+j^{-2}),
 \qquad E=\R\setminus F.
 \label{eq:sparse-finite-complement}
\end{equation}
The intervals are disjoint and \(|F|=\sum_{j\ge2}j^{-2}<\infty\), so \(E\) is observable at every positive time. At \(x_j=\e^j\),
\[
 |F\cap[x_j,x_j+1]|=j^{-2},\qquad
 x_j^\alpha|F\cap[x_j,x_j+1]|=\e^{\alpha j}j^{-2}\longrightarrow\infty
\]
for every \(\alpha>0\). This gives a second direct reason that no positive exponent in \eqref{eq:thin} can be necessary.
\end{example}

Neither half-line is thick, because it misses intervals of arbitrarily large length on the opposite side. The necessary part of \cref{thm:main} therefore recovers the failure of observation on either half-line proved in \cite[Theorem~1.4(ii)]{HWW26}. Every finite-measure observation set also fails thickness: otherwise disjoint intervals of a fixed length would each contain a fixed positive amount of observed measure. The relevant distinction is between a finite-measure observation set and an observation set with finite-measure complement.

\subsection{A decaying global residual coefficient is not necessary}
The arbitrary-time proof retains the turning-region remainder in \eqref{eq:turning-resolvent}. That term cannot in general be discarded in favor of a globally vanishing residual coefficient.

\begin{proposition}\label{prop:no-decay}
For \(E=E_{\mathrm{per}}\) in \eqref{eq:periodic-intro}, there do not exist a function \(a(\lambda)\) tending to zero as \(\lambda\to+\infty\) and a finite constant \(C\) such that
\begin{equation}
 \norm u_2\le a(\lambda)\norm{(H-\lambda)u}_2
                  +C\norm u_{L^2(E)}
 \label{eq:impossible-decay}
\end{equation}
holds for every sufficiently large real \(\lambda\) and every \(u\in D(H)\).
\end{proposition}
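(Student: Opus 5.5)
We argue by contradiction and construct quasimodes localized near the turning point, where \(|x|\approx\lambda\). Assume \eqref{eq:impossible-decay} holds for all \(\lambda\ge\lambda_0\). The idea is that the constant coefficient \(C\) is fixed, while \(a(\lambda)\to0\). It therefore suffices to find \(u_\lambda\in D(H)\) with \(\norm{u_\lambda}_2=1\), with \(\norm{(H-\lambda)u_\lambda}_2\le K\) for a fixed \(K\), and with \(\norm{u_\lambda}_{L^2(E)}\le 1/(2C)\) along some sequence \(\lambda\to\infty\). Then \eqref{eq:impossible-decay} would give \(1\le a(\lambda)K+1/2\), which is false once \(a(\lambda)K<1/2\).

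For the construction we reuse the translated Gaussian from \cref{sec:necessity}. Near \(x=\lambda\) the operator \(H-\lambda\), conjugated by \(\mathcal T_\lambda\), is \(-\partial_y^2+y+W(y)\), with \(W\) supported in \(y<-\lambda\). We want a profile that is concentrated in \(|y|\lesssim 1\) but avoids \(E_{\mathrm{per}}-\lambda\), which is again periodic. A single fixed profile cannot avoid a periodic set, so we instead use a profile \(\phi_\delta\) supported in a short interval \((\tfrac12-\delta',\tfrac12+\delta')\subset(1/2,1)\), which lies inside \(E_{\mathrm{per}}^c\). We take \(\lambda\in\Z\) and set \(u_\lambda=\mathcal T_\lambda\phi_\delta\), with \(\phi_\delta\in C_c^\infty\), support in \((1/2,1)\), and \(\norm{\phi_\delta}_2=1\). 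Then \(\norm{u_\lambda}_{L^2(E)}=0\) exactly, because the support of \(u_\lambda\) is contained in \(\lambda+(1/2,1)\subset E_{\mathrm{per}}^c\). The residual is \((H-\lambda)u_\lambda=\mathcal T_\lambda(-\phi_\delta''+y\phi_\delta)\), since \(|x|-\lambda=y\) on the support for \(\lambda\ge1\). Its norm equals \(\norm{-\phi''+y\phi}_2\), a fixed constant \(K\) that does not depend on \(\lambda\).

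The contradiction then follows directly: \(1\le a(\lambda)K\) for every integer \(\lambda\ge\lambda_0\), whereas \(a(\lambda)\to0\). We still need \(u_\lambda\in D(H)\). This holds because \(u_\lambda\in C_c^\infty\), using the converse part of \cref{lem:form}.

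The only point requiring care is checking that the claimed impossibility is not trivial for some other reason. The construction above actually shows that even \(\norm u_2\le a(\lambda)\norm{(H-\lambda)u}_2\) fails whenever \(a(\lambda)<1/K\), as long as the support lies in the complement. No step is therefore a real obstacle. At most one must fix a single smooth bump on \((1/2,1)\) and confirm that it is unit-normalized. If one wished to avoid using \(\lambda\in\Z\), periodicity gives the same conclusion for \(\lambda\) along integers, and this is sufficient because the hypothesis concerns all sufficiently large \(\lambda\).
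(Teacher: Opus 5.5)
Your proposal is correct and is essentially the paper's proof: a fixed unit bump \(\chi\in C_c^\infty((1/2,1))\), translated to \(u_j=\chi(\cdot-j)\) with \(\lambda_j=j\), lies in \(E_{\mathrm{per}}^c\) and has residual \(-\chi''+y\chi\) of \(j\)-independent norm \(K\), which forces \(1\le a(j)K\to0\). Your first description uses the interval \((\tfrac12-\delta',\tfrac12+\delta')\), which is not contained in \((1/2,1)\), but your final choice of support in \((1/2,1)\) is the correct one, and the detours through the Gaussian and the bound \(1/(2C)\) are not needed.
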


\begin{proof}
Choose \(\chi\in C_c^\infty((1/2,1))\) with \(\norm\chi_2=1\). For a positive integer \(j\), put
\[
 u_j(x)=\chi(x-j),\qquad \lambda_j=j.
\]
The function \(u_j\) belongs to \(D(H)\) and is supported in an unobserved interval of \(E\). On its support \(x>0\), and hence
\[
 (H-j)u_j(x)=-\chi''(x-j)+(x-j)\chi(x-j).
\]
It follows that
\[
 \norm{u_j}_{L^2(E)}=0,\qquad\norm{u_j}_2=1,
 \qquad\norm{(H-j)u_j}_2=\norm{-\chi''+y\chi}_{L^2(\R)},
\]
where the last number is fixed and finite. Substitution into \eqref{eq:impossible-decay} would give
\(1\le a(j)\norm{-\chi''+y\chi}_2\to0\), a contradiction.
\end{proof}

The periodic set in \cref{prop:no-decay} is nevertheless observable at every positive time by \cref{thm:main}. Thus the failure of that particular decaying resolvent estimate is not a positive-time obstruction. A function supported in a fixed unobserved interval with residual of fixed size is not a sequence whose evolution has observation tending to zero. The localization in \cref{prop:turning} and the compactness argument in \cref{sec:arbitrary-time} are what permit the general thick-set result.

\subsection{Uniformity and the scope of the conclusion}
For fixed \(T,L,\gamma\), the sufficient proof first chooses a time cutoff \(\chi\), then fixes \(R\) using \eqref{eq:turning-spacetime}, and finally fixes \(\psi\), \(\rho\), and \(a_0\). None of these choices depends on the observation set or on the sequence of initial data. The interval centers and the normalization factors are selected afterwards. The contradiction therefore gives a uniform constant for the full \((L,\gamma)\)-thick class. It does not give its numerical value.

The proof of necessity fixes \(T\) before the centers escape to infinity. Its Gaussian comparison constants may depend on that fixed time. This order of choices proves failure at every individual finite time, without requiring estimates uniform as \(T\to\infty\).

The result is the equivalence in \cref{thm:main} for the fixed potential \(|x|\), together with the dichotomy \eqref{eq:Tmin-dichotomy}. It gives the thick-set answer to \cite[Remark~3.8]{HWW26} in the arbitrary-time sense. It does not assert a corresponding characterization for arbitrary potentials, variable diffusion coefficients, higher dimensions, or time-dependent observation sets. Determining explicit or optimal small-time bounds for general thick sets remains a separate quantitative question.

\appendix
\section{A fixed resolvent estimate and a sufficient observation time}\label{sec:time}
We give the abstract time argument with its constants. Resolvent criteria for conservative systems and their relation to spectral criteria are developed in~\cite{Miller05,RTTT05}; see also the operator-semigroup treatment in~\cite{TW09}.

\begin{proposition}\label{prop:abstract-time}
Let \(A\) be a self-adjoint operator on a complex Hilbert space \(X\), and let \(B:X\to Y\) be bounded, where \(Y\) is another Hilbert space. Suppose there are constants \(M,m>0\) such that
\begin{equation}
 \norm z_X^2\le M\norm{(A-\lambda)z}_X^2+m\norm{Bz}_Y^2,
 \qquad\lambda\in\R,\quad z\in D(A).
 \label{eq:abstract-res}
\end{equation}
Then, for every \(T>\pi\sqrt M\),
\begin{equation}
 \norm f_X^2
 \le \frac{2mT}{T^2-\pi^2M}
        \int_0^T\norm{B\e^{-itA}f}_Y^2\dd t,
 \qquad f\in X.
 \label{eq:abstract-obs}
\end{equation}
Conversely, if for some \(T>0\) one has
\begin{equation}
 \norm f_X^2\le C_T\int_0^T\norm{B\e^{-itA}f}_Y^2\dd t
 \qquad(f\in X),
 \label{eq:assumed-obs}
\end{equation}
then
\begin{equation}
 \norm z_X^2
 \le 2TC_T\norm{Bz}_Y^2+\frac{2T^3C_T\norm B^2}{3}\norm{(A-\lambda)z}_X^2
 \label{eq:converse-res}
\end{equation}
for all \(\lambda\in\R\) and \(z\in D(A)\).
\end{proposition}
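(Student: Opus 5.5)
For the forward implication I will use the standard time-Fourier argument. Fix \(f\in D(A)\) first; density and the boundedness of \(B\) extend the result to all of \(X\). Put \(u(t)=\e^{-itA}f\) and take the cutoff
\[
 \chi(t)=\sin(\pi t/T)\ind_{[0,T]}(t),
\]
which lies in \(W^{1,\infty}\) and vanishes at both endpoints. Set \(v=\chi u\). Then \((i\partial_t-A)v=i\chi'u\) in the weak sense, with no boundary terms because \(\chi(0)=\chi(T)=0\).

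Arguing exactly as in \cref{lem:graph-fourier}, with the Fourier transform taken in \(t\), I obtain \(\widehat v(\tau)\in D(A)\) and \((A+\tau)\widehat v(\tau)=-i\widehat{\chi'u}(\tau)\) for almost every \(\tau\). Apply \eqref{eq:abstract-res} with \(\lambda=-\tau\) to \(z=\widehat v(\tau)\) and integrate in \(\tau\). Plancherel and unitarity of the group then give
\[
 \norm{\chi}_2^2\norm f^2\le M\norm{\chi'}_2^2\norm f^2+m\int_0^T\chi^2\norm{Bu}^2\dd t .
\]
The constants are explicit: \(\norm\chi_2^2=T/2\) and \(\norm{\chi'}_2^2=\pi^2/(2T)\). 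Since \(\chi^2\le1\), this becomes \((T/2-\pi^2M/(2T))\norm f^2\le m\int_0^T\norm{Bu}^2\). That is precisely \eqref{eq:abstract-obs} whenever \(T>\pi\sqrt M\).

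The only delicate point is justifying the Fourier identity with a merely Lipschitz cutoff. I will handle it by observing that \(v\in H^1(\R;X)\cap L^2(\R;D(A))\), which holds because \(f\in D(A)\). Alternatively, \(\chi\) can be approximated by smooth cutoffs; both \(\norm{\chi'}_2\) and \(\norm\chi_2\) pass to the limit.

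For the converse, fix \(\lambda\in\R\) and \(z\in D(A)\), and set \(g=(A-\lambda)z\). Duhamel's formula gives
\[
 \e^{-itA}z=\e^{-it\lambda}z-i\int_0^t\e^{-i(t-s)A}\e^{-is\lambda}g\dd s ,
\]
which can be verified by differentiating \(\e^{it\lambda}\e^{-itA}z\). Hence \(\norm{B\e^{-itA}z}\le\norm{Bz}+t\norm B\norm g\). Inserting this into \eqref{eq:assumed-obs} with \(f=z\) and using \((a+b)^2\le2a^2+2b^2\) yields
\[
 \norm z^2\le C_T\int_0^T\bigl(2\norm{Bz}^2+2t^2\norm B^2\norm g^2\bigr)\dd t
 =2TC_T\norm{Bz}^2+\tfrac{2T^3}{3}C_T\norm B^2\norm g^2,
\]
which is \eqref{eq:converse-res}.

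I expect the regularity bookkeeping in the first part to be the only obstacle; the computation itself is routine.
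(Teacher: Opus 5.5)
Your proposal is correct and follows the paper's proof essentially step for step: the same cutoff $\sin(\pi t/T)$, the time Fourier transform with $\lambda=-\tau$, the same constants $T/2$ and $\pi^2/(2T)$, and the same Duhamel comparison with $\e^{-it\lambda}z$ for the converse. The only difference is how domain issues are handled: the paper first takes $f$ in $\Ran\ind_{[-N,N]}(A)$, where $A$ is bounded, whereas you take $f\in D(A)$ and use $v\in H^1(\R;X)\cap L^2(\R;D(A))$, and both arguments then finish by density.
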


\begin{proof}
\step{Step 1. Time cutoff and the Fourier transform.}
We first take \(f\in\Ran P_N\), where \(P_N=\ind_{[-N,N]}(A)\). On this spectral subspace \(A\) is bounded, so every expression below is justified in the Hilbert-space norm, with no domain interchange for an unbounded operator. Put \(u(t)=\e^{-itA}f\) and
\[
 \varphi_T(t)=
 \begin{cases}\sin(\pi t/T),&0<t<T,\\0,&\text{otherwise}.
 \end{cases}
 \qquad w(t)=\varphi_T(t)u(t).
\]
The zero extension \(\varphi_T\) belongs to \(H^1(\R)\). It has no jump at \(0\) or \(T\), and therefore its distributional derivative contains no point masses. We have
\begin{equation}
 (i\partial_t-A)w=i\varphi_T'(t)u(t).
 \label{eq:time-cutoff}
\end{equation}
With the convention in \eqref{eq:fourier}, the time transform gives
\begin{equation}
 (A+\tau)\widehat w(\tau)
 =-i\,\widehat{\varphi_T'u}(\tau).
 \label{eq:time-transform}
\end{equation}
For each \(\tau\), the vector \(\widehat w(\tau)\) belongs to \(\Ran P_N\subset D(A)\). Apply \eqref{eq:abstract-res} with \(\lambda=-\tau\) and integrate over \(\tau\). Since a bounded \(B\) commutes with the Bochner Fourier transform, Hilbert-valued Plancherel yields
\begin{equation}
 \int_{\R}\norm{w(t)}_X^2\dd t
 \le M\int_{\R}|\varphi_T'(t)|^2\norm{u(t)}_X^2\dd t
       +m\int_{\R}|\varphi_T(t)|^2\norm{Bu(t)}_Y^2\dd t.
 \label{eq:time-plancherel}
\end{equation}

\step{Step 2. Absorption and the time threshold.}
The unitary group preserves \(\norm f_X\), and direct integration gives
\[
 \int_0^T|\varphi_T|^2\dd t=\frac T2,
 \qquad \int_0^T|\varphi_T'|^2\dd t=\frac{\pi^2}{2T},
 \qquad |\varphi_T|\le1.
\]
Consequently,
\[
 \frac T2\norm f_X^2
 \le\frac{M\pi^2}{2T}\norm f_X^2
        +m\int_0^T\norm{B\e^{-itA}f}_Y^2\dd t.
\]
If \(T^2>\pi^2M\), moving the first term on the right to the left proves \eqref{eq:abstract-obs} for \(f\in\Ran P_N\).

For arbitrary \(f\in X\), the spectral theorem gives \(P_Nf\to f\) in \(X\). Moreover,
\[
 \int_0^T\norm{B\e^{-itA}(P_Nf-f)}_Y^2\dd t
 \le T\norm B^2\norm{P_Nf-f}_X^2\longrightarrow0.
\]
Thus \eqref{eq:abstract-obs} passes to every \(f\in X\).

\step{Step 3. Converse implication.}
Fix \(z\in D(A)\) and \(\lambda\in\R\). Comparing \(\e^{-itA}z\) with \(\e^{-it\lambda}z\) gives
\begin{equation}
 \e^{-itA}z-\e^{-it\lambda}z
 =-i\int_0^t\e^{-i(t-s)A}\e^{-is\lambda}(A-\lambda)z\dd s.
 \label{eq:abstract-duhamel}
\end{equation}
This follows by differentiating on \(D(A)\), or by integrating the strong evolution equation. Hence
\[
 \norm{B\e^{-itA}z}_Y
 \le\norm{Bz}_Y+t\norm B\norm{(A-\lambda)z}_X.
\]
Square using \((a+b)^2\le2a^2+2b^2\), integrate, and apply \eqref{eq:assumed-obs}. Since \(\int_0^Tt^2\dd t=T^3/3\), the result is \eqref{eq:converse-res}.
\end{proof}

\begin{proof}[Proof of \eqref{eq:main-time}]
Take \(X=L^2(\R)\), \(Y=L^2(E)\) and let \(B\) be restriction to \(E\). Then \(\norm B\le1\). \Cref{prop:resolvent-intro} supplies \eqref{eq:abstract-res} with \(M=m=C_0(L,\gamma)\). Applying \cref{prop:abstract-time} gives \eqref{eq:main-time}. In particular, at least one observation time exists whenever \(E\) is thick.
\end{proof}

\begin{remark}\label{rem:threshold}
The strict inequality \(T>\pi\sqrt{C_0}\) arises from the chosen time cutoff and the fixed residual coefficient. No conclusion at equality, or below this sufficient threshold, follows from the calculation. Also, observability at one time implies observability at every larger time by enlarging the nonnegative time integral. This monotonicity does not reduce the minimal time to zero.
\end{remark}

\section{A direct one-dimensional unique-continuation proof}\label{sec:direct-ucp}
This appendix proves \cref{prop:measurable-ucp} without using the open-set theorem in \cref{lem:open-ucp}. Its assumptions remain exactly those of that proposition: the solution is locally \(L^2\), and the potential is locally bounded and independent of time. The proof uses time regularization of order strictly less than two and a twice-integrated spatial equation. The construction of the regularizing kernel is included.

\subsection{A compactly supported regularizing kernel}
\begin{lemma}\label{lem:gevrey-kernel}
There is a nonnegative function \(\eta\in C_c^\infty(\R)\), with \(\supp\eta=[-1,1]\) and \(\int\eta=1\), for which
\begin{equation}
 \norm{\eta^{(k)}}_{L^2(\R)}
 \le C A^k(k!)^{3/2}\qquad(k\ge0)
 \label{eq:gevrey-kernel-bound}
\end{equation}
with fixed constants \(C,A>0\). Consequently, for \(\eta_\varepsilon(t)=\varepsilon^{-1}\eta(t/\varepsilon)\),
\begin{equation}
 \norm{\eta_\varepsilon^{(k)}}_2
 \le C\varepsilon^{-1/2}(A/\varepsilon)^k(k!)^{3/2}.
 \label{eq:gevrey-scaled-bound}
\end{equation}
\end{lemma}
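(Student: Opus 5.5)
The plan is to build \(\eta\) explicitly from the classical Gevrey bump \(\exp(-1/s)\), which has Gevrey-\(2\) growth of derivatives, and then to improve the exponent from \(2\) to \(3/2\) by changing the power in the exponential. Concretely, put
\[
 g(s)=\exp\bigl(-s^{-2}\bigr)\quad(s>0),\qquad g(s)=0\quad(s\le0).
\]
The standard Cauchy-integral estimate shows that \(g\) belongs to the Gevrey class of order \(1+1/2=3/2\). Fix \(s>0\) and take the circle \(|w-s|=\delta s\) with a small fixed \(\delta\). On it \(\operatorname{Re}(w^{-2})\ge c\,s^{-2}\). Cauchy's formula then gives
\[
 |g^{(k)}(s)|\le k!\,(\delta s)^{-k}\e^{-cs^{-2}}.
\]
Maximizing \(s^{-k}\e^{-cs^{-2}}\) over \(s>0\) yields a bound of order \((k/(2c))^{k/2}\e^{-k/2}\), which is at most \(C'A'^k(k!)^{1/2}\) by Stirling. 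Hence \(|g^{(k)}(s)|\le CA^k(k!)^{3/2}\) uniformly for \(s>0\). The same bound holds trivially for \(s\le0\), where all derivatives vanish; these derivatives extend continuously by zero at \(s=0\), since each has the form (rational function)\(\cdot\,\e^{-s^{-2}}\).

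Next, set \(\tilde\eta(t)=g(1+t)\,g(1-t)\). It is smooth, nonnegative, positive exactly on \((-1,1)\), so \(\supp\tilde\eta=[-1,1]\). By Leibniz,
\[
 |\tilde\eta^{(k)}|\le\sum_{i}\binom ki C^2A^k(i!)^{3/2}((k-i)!)^{3/2}\le C^2A^k(k!)^{3/2}\sum_i\binom ki^{-1/2}\le C^2(2A)^k(k!)^{3/2}.
\]
Here \((i!(k-i)!)^{3/2}=(k!)^{3/2}\binom ki^{-3/2}\) and \(\binom ki^{-1/2}\le1\) were used. Normalize \(\eta=\tilde\eta/\int\tilde\eta\), which changes only \(C\). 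Since the support has length \(2\), the sup bound gives the \(L^2\) bound \eqref{eq:gevrey-kernel-bound} with \(C\) multiplied by \(\sqrt2\).

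For \eqref{eq:gevrey-scaled-bound}, note that \(\eta_\varepsilon^{(k)}(t)=\varepsilon^{-1-k}\eta^{(k)}(t/\varepsilon)\). The change of variables \(t=\varepsilon s\) gives
\[
 \norm{\eta_\varepsilon^{(k)}}_2=\varepsilon^{-1/2-k}\norm{\eta^{(k)}}_2,
\]
and the claim follows immediately.

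The main obstacle is the Cauchy estimate for \(g\), specifically checking that the real part of \(w^{-2}\) stays comparable to \(s^{-2}\) on the chosen circle. Writing \(w=s(1+\delta\e^{i\theta})\), this follows from \(\operatorname{Re}(1+\delta\e^{i\theta})^{-2}\ge(1-\delta)^2(1+\delta)^{-4}-O(\delta)>0\) for small \(\delta\), for instance \(\delta=1/10\). After that, only the elementary Stirling bookkeeping remains.
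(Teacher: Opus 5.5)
Your proposal is correct and follows essentially the same route as the paper. The paper uses the same bump $\exp(-s^{-2})$, Cauchy's formula on a disk of radius proportional to $s$ (it takes $s/8$, where you take $s/10$), maximization of $s^{-k}\e^{-c s^{-2}}$ combined with $k!\ge(k/\e)^k$, the product $f(1-t)f(1+t)$ with the Leibniz bound $\sum_\ell\binom{k}{\ell}^{-1/2}\le k+1\le 2^k$, the support-length-two passage from sup norm to $L^2$, and the same scaling identity.
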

\begin{proof}
Define
\[
 f(s)=\begin{cases}\exp(-s^{-2}),&s>0,\\0,&s\le0.\end{cases}
\]
For a real \(s>0\), the holomorphic function \(z\mapsto\exp(-z^{-2})\) is defined on the disk \(|z-s|\le s/8\). Write \(z=s(1+w)\), \(|w|\le1/8\). Since
\(
 \operatorname{Re}(1+w)^{-2}\ge1/2,
\)
Cauchy's formula gives
\begin{equation}
 |f^{(k)}(s)|\le k!(8/s)^k\exp(-1/(2s^2)).
 \label{eq:gevrey-cauchy}
\end{equation}
For completeness, the stated bound on the real part follows from
\(|1+w|\le9/8\) and \(|\arg(1+w)|\le\arctan(1/7)\):
\[
 \operatorname{Re}(1+w)^{-2}
 =|1+w|^{-2}\cos(2\arg(1+w))
 \ge(8/9)^2\cos(2\arctan(1/7))>1/2.
\]
For fixed \(k\), the right-hand side of \eqref{eq:gevrey-cauchy} tends to zero as \(s\downarrow0\). Induction on \(k\) therefore shows that the zero extension of \(f\) is smooth and all of its derivatives vanish at zero.

For \(k\ge1\), the maximum of \(s^{-k}\exp(-1/(2s^2))\) over \(s>0\) equals \((k/\mathrm e)^{k/2}\). The elementary estimate \(k!\ge(k/\mathrm e)^k\) implies
\[
 \norm{f^{(k)}}_{L^\infty(\R)}\le8^k(k!)^{3/2}.
\]
For \(k=0\), \(\norm f_\infty\le1\). Set
\[
 \eta(t)=c\,f(1-t)f(1+t),
 \qquad
 c=\left(\int_{-1}^1 f(1-t)f(1+t)\dd t\right)^{-1}.
\]
The normalizing denominator is positive and finite. The product is smooth, vanishes outside \([-1,1]\), and is strictly positive inside. By Leibniz' rule, for \(k\ge1\),
\begin{align*}
 \norm{\eta^{(k)}}_\infty
 &\le c8^k\sum_{\ell=0}^k\binom{k}{\ell}
                    (\ell!)^{3/2}((k-\ell)!)^{3/2}\\
 &=c8^k(k!)^{3/2}
             \sum_{\ell=0}^k\binom{k}{\ell}^{-1/2}
 \le c8^k(k+1)(k!)^{3/2}
 \le c16^k(k!)^{3/2}.
\end{align*}
The case \(k=0\) is absorbed into a fixed constant. As the support has length two, this gives \eqref{eq:gevrey-kernel-bound}. Finally,
\(\norm{\eta_\varepsilon^{(k)}}_2
=\varepsilon^{-k-1/2}\norm{\eta^{(k)}}_2\), proving \eqref{eq:gevrey-scaled-bound}.
\end{proof}

\subsection{The integral equation}
\begin{proof}[Direct proof of \cref{prop:measurable-ucp}]
Let \(U,V,I,S\) satisfy the assumptions of that proposition. As in its proof, it suffices to treat a bounded time interval \(I=(a,b)\) and a bounded positive-measure subset of \(S\). Fix \(0<\varepsilon<(b-a)/4\), use the kernel from \cref{lem:gevrey-kernel}, and put
\[
 Y=\eta_\varepsilon*_tU,
 \qquad I_\varepsilon=(a+\varepsilon,b-\varepsilon).
\]
\Cref{lem:ucp-regularization} gives
\begin{equation}
 i\partial_tY=-\partial_x^2Y+VY,
 \qquad Y\in C^\infty(I_\varepsilon;H^2_{\loc}(\R)).
 \label{eq:direct-ucp-equation}
\end{equation}
The common-zero-set argument in Steps~1--2 of the proof of \cref{prop:measurable-ucp} uses only this regularization and the original zero set. It therefore provides a point \(x_0\in\R\), independent of time, with
\begin{equation}
 Y(t,x_0)=\partial_xY(t,x_0)=0
       \qquad(t\in I_\varepsilon).
 \label{eq:direct-zero-data}
\end{equation}
No open-set unique-continuation theorem has been used up to this point.

\step{Step 1. A derivative bound with constants independent of the order.}
Fix a compact \(K\Subset I_\varepsilon\), a bounded interval \(J\) containing \(x_0\), and a bounded interval \(J_*\) with \(\overline J\subset J_*\). Choose a compact \(K_*\Subset I\) containing the \(\varepsilon\)-neighborhood of \(K\). Equations \eqref{eq:ucp-time-derivative} and \eqref{eq:gevrey-scaled-bound} imply
\[
 \sup_{t\in K}\norm{\partial_t^kY(t)}_{L^2(J_*)}
 \le C_{\varepsilon,K,J_*} A_\varepsilon^k(k!)^{3/2},
\]
where \(A_\varepsilon\ge1\) is fixed for the chosen \(\varepsilon\). Combining this with
\[
 \partial_x^2\partial_t^kY
       =V\partial_t^kY-i\partial_t^{k+1}Y
\]
and \cref{lem:local-elliptic}, followed by \(H^2(J)\hookrightarrow C^1(\overline J)\), gives
\begin{equation}
 \sup_{t\in K}\sup_{x\in J}|\partial_t^kY(t,x)|
 \le M A_\varepsilon^k((k+1)!)^{3/2}
       \qquad(k\ge0).
 \label{eq:direct-all-derivatives}
\end{equation}
Here \(M\) may depend on \(U,\varepsilon,K,J,J_*\) and \(\norm V_{L^\infty(J_*)}\), but it is independent of \(k\). The extra factor \(A_\varepsilon\) from the derivative of order \(k+1\) is absorbed into \(M\). This fixes the constants before the iteration index is introduced.

\step{Step 2. Integration twice in the spatial variable.}
The Sobolev fundamental theorem of calculus and \eqref{eq:direct-zero-data} give
\begin{equation}
 Y(t,x)=\int_{x_0}^x(x-y)
             \bigl(V(y)Y(t,y)-i\partial_tY(t,y)\bigr)\dd y.
 \label{eq:direct-volterra}
\end{equation}
This identity holds for all \(t\in I_\varepsilon\) and every \(x\) in a bounded interval containing \(x_0\), using the continuous representatives. Write
\[
 (\mathcal JF)(x)=\int_{x_0}^x(x-y)F(y)\dd y.
\]
Then \eqref{eq:direct-volterra} says \(Y=(\mathcal JV-i\mathcal J\partial_t)Y\). The operators \(\mathcal J\) and multiplication by \(V\) are independent of time and commute with time differentiation on these regularized functions. They are not assumed to commute with one another.

\step{Step 3. Iteration with its full factorial bound.}
Put
\[
 Q=\norm V_{L^\infty(J)},\qquad
 r=\sup_{x\in J}|x-x_0|.
\]
Iterate \eqref{eq:direct-volterra} \(m\) times. The expansion is a sum of ordered words in \(\mathcal JV\) and \(-i\mathcal J\partial_t\). There are \(\binom mk\) words containing \(k\) time derivatives. For \(x\ge x_0\), the spatial kernel of \(\mathcal J\) is nonnegative. Replacing every factor \(V\) by \(Q\) and moving the time derivatives to the function on the right shows that each such word is bounded in absolute value by
\[
 Q^{m-k}\mathcal J^m|\partial_t^kY(t,\cdot)|(x).
\]
Repeated integration gives
\begin{equation}
 (\mathcal J^mF)(x)
   =\frac1{(2m-1)!}\int_{x_0}^x(x-y)^{2m-1}F(y)\dd y,
 \qquad \mathcal J^m1=\frac{(x-x_0)^{2m}}{(2m)!}.
 \label{eq:iterated-volterra-kernel}
\end{equation}
The formula follows by induction: after changing the order of integration, the integral of the product of the two kernels is the corresponding beta integral, or may be evaluated by two elementary integrations. For \(x<x_0\), set \(s=x_0-x\); the same calculation applies to the reflected variable and gives the bound with \(|x-x_0|\).

It follows from \eqref{eq:direct-all-derivatives} that, for \((t,x)\in K\times J\),
\begin{align}
 |Y(t,x)|
 &\le \frac{M r^{2m}}{(2m)!}
       \sum_{k=0}^m\binom mkQ^{m-k}A_\varepsilon^k
                      ((k+1)!)^{3/2}\notag\\
 &\le M\frac{[r^2(Q+A_\varepsilon)]^m
                     ((m+1)!)^{3/2}}{(2m)!}.
 \label{eq:direct-factorial-bound}
\end{align}
All constants in this inequality were fixed before \(m\). Moreover,
\[
 (2m)!\ge(m!)^2,
 \qquad
 \frac{((m+1)!)^{3/2}}{(2m)!}
 \le\frac{(m+1)^{3/2}}{(m!)^{1/2}}.
\]
Using \(m!\ge(m/\mathrm e)^m\), the right-hand side of \eqref{eq:direct-factorial-bound} is at most
\[
 M(m+1)^{3/2}
     \left(\frac{\mathrm e^{1/2}r^2(Q+A_\varepsilon)}{m^{1/2}}\right)^m,
\]
which tends to zero as \(m\to\infty\). Hence \(Y=0\) on \(K\times J\). Every compact spatial interval is contained in one that also contains \(x_0\), and \(K\Subset I_\varepsilon\) was arbitrary. Thus \(Y=0\) on \(I_\varepsilon\times\R\).

\step{Step 4. Removal of regularization.}
As in Step~4 of the proof of \cref{prop:measurable-ucp}, compactly supported time cutoffs reduce the approximation to the usual \(L^2\) approximate-identity property. Therefore \(\eta_\varepsilon*_tU\to U\) on every compact time-space rectangle as \(\varepsilon\downarrow0\). Since each regularized function vanishes, \(U=0\) on \(I\times\R\). This completes the direct proof.
\end{proof}

\begin{remark}
The iteration uses neither derivatives of \(V\) in the spatial variable nor a global bound on \(V\). Time independence is used when moving time derivatives through the integral operators. The derivative order \(3/2<2\) is used only to make the numerator in \eqref{eq:direct-factorial-bound} smaller than the factorial produced by twice integrating in space. These are the precise features needed for the local \(L^2\) unique-continuation statement in the main proof.
\end{remark}

\end{document}